\numberwithin{equation}{section}
\newtheorem{lemma}[equation]{Lemma}
\newtheorem{proposition}[equation]{Proposition}
\newtheorem{corollary}[equation]{Corollary}
\newtheorem{remark}[equation]{Remark}
\newtheorem{theorem}[equation]{Theorem}
\newtheorem{conjecture}[equation]{Conjecture}
\def\L{\mathcal{L}}
\def\O{\mathcal{O}}
\def\X{\mathcal{X}}
\def\Y{\mathcal{Y}}
\def\C{\mathbb{C}}
\def\P{\mathbb{P}}
\def\N{\mathcal N}
\def\F{\mathbb{F}}
\begin{document}
\title  {Homogeneous interpolation on ten points}
\author {Ciro Ciliberto and Rick Miranda}

\date{}
\maketitle

\begin{abstract}
In this paper we prove that for all pairs $(d,m)$ with $d/m\geq 174/55$,
the linear system of plane curves of degree $d$
with ten general base points of multiplicity $m$
has the expected dimension.
\end{abstract}

\tableofcontents

\section*{Introduction}

Let us denote by $\L(d;m_1^{s_1},\ldots,m_h^{s_h})$
the linear system of plane curves of degree $d$
with $s_i$ general points of multiplicity at least $m_i$, $i=1,...,h$.
This linear system has been under study for more than a century,
and in general the computation of its dimension, which is the
basic problem in multivariate Hermite interpolation,
is still an open problem.

For nine or fewer points the dimension is known,
which is a classical result going back to Castelnuovo \cite {castelnuovo}
(see also \cite{nagata2} and \cite{Har2}).
There is a set of related conjectures about this dimension due to
Segre, Harbourne, Gimigliano and Hirschowitz
(see \S \ref{sec:GHH}) and Nagata (see \cite {nagata}),
which we will review in the next section.
However, even in the \emph{homogeneous} case with ten points,
i.e. for the linear system  $\L(d;m^{10})$,
the dimension is not known; it is expected to be
$$
e(\L(d;m^ {10}))=\max\{-1, \frac{d(d+3)}{2} -5m(m+1)\}
$$
and the conjectures, in this case, assert that the dimension is as expected.
In this paper we prove the following result which supports the above conjectures. 

\begin{theorem}\label{thm:main}
For every pair of integers $(d,m)$ with $d/m\geq {174}/{55}$,
the linear system $\L(d;m^{10})$ has the {expected dimension}.
\end{theorem}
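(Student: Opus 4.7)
The plan is to combine a degeneration of the projective plane with an inductive argument on the pair $(d,m)$. We realize $\P^2$ as the general fibre of a flat one-parameter family $X\to\Delta$ whose central fibre is the reducible surface $X_0=\P\cup_R\F_1$, where $\P\cong\P^2$, $\F_1$ is the first Hirzebruch surface, and the two components meet transversally along a smooth rational curve $R$ that is a line in $\P$ and a section of self-intersection $+1$ on $\F_1$. After a suitable twist of $\O_{\P^2}(d)$ by the exceptional divisor of the total-space blow-up, the limit line bundle on $X_0$ restricts to $\O_\P(d-a)$ on one side and to a bundle of bidegree $(a,b)$ on $\F_1$, with $a,b$ at our disposal. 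We then specialize $k$ of the ten points to general position on $\P$ and the remaining $10-k$ to general position on $\F_1$.

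First I would analyze the limit linear system. Its global sections are identified with pairs of sections of the restricted systems $\L_\P$ and $\L_{\F_1}$ (each carrying the appropriate base conditions) that agree upon restriction to $R$, giving
\[
h^0(X_0,\L_0\otimes\calF)=h^0(\L_\P)+h^0(\L_{\F_1})-\mathrm{rank}(\rho_R),
\]
where $\rho_R$ denotes the restriction map to $R$ and $\calF$ is the ideal sheaf of the thickened base points. By upper semicontinuity, it suffices to prove that this number equals the expected dimension $e(\L(d;m^{10}))$.

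Next, I would set up an induction on $m$. With a careful choice of the split $k$, the system $\L_\P$ involves at most nine base points on $\P^2$ and hence has computable dimension by the classical results recalled in the introduction, while $\L_{\F_1}$ is a linear system on $\F_1$ with general base points that can be handled by analogous techniques (either by a second degeneration of $\F_1$, or by an inductive reduction). If the reduction reintroduces ten base points in the plane, we invoke the Theorem itself for a strictly smaller pair $(d',m')$; the parameters must therefore be chosen so that $d'/m'\geq 174/55$ as well, so that the inductive hypothesis applies.

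The main obstacle will be the numerical optimization that makes this induction close up. For every pair $(d,m)$ with $d/m\geq 174/55$ one must exhibit admissible parameters $(k,a,b)$ so that the intersection numbers on $\F_1$ are non-negative, both sub-systems attain their expected dimensions, the restriction $\rho_R$ has maximal rank, and any recursive ten-point plane system stays within the inductive range. The threshold $174/55$ is precisely the sharpest ratio for which such a compatible choice exists across all residues of $(d,m)$ modulo the relevant denominators, so verifying this quantitative bound — together with settling a finite list of residual small cases by hand — will be the technical heart of the argument.
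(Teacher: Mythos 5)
Your starting point --- the two--component degeneration $X_0=\P\cup\F_1$ with a twist parameter and a split of the ten points --- is indeed how the paper begins (\S\ref{sec:firstdeg}), and the fibre--product description of the limit sections together with semicontinuity is the right framework. But this degeneration alone only proves the theorem for $d/m\geq 10/3$ (Proposition \ref{prop:ratio>=10/3}). The genuine gap in your proposal is what happens below that ratio: with four points on $\P$ and six on $\F_1$, the restricted system on the seven--point side has negative intersection with the $(-1)$--curve coming from the cubic in $\L(3;2,1^6)$ as soon as $d/m<10/3$, so that system is \emph{special} and the dimension count for the fibre product breaks down. No choice of the parameters $(k,a,b)$ repairs this within the two--component picture: keeping at most nine points on each side already forces essentially the $4+6$ split, and the twist parameter $a$ cannot make both restricted bundles nef simultaneously once the ratio drops. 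Your proposed induction on $m$ does not address this, because the obstruction is not the size of $(d,m)$ but the presence of $(-1)$--curves meeting the limit bundles negatively; there is no smaller ten--point homogeneous system to recurse to, and the bound $174/55$ does not arise as a closing condition for any such recursion.

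What is missing is the paper's central device: \emph{throwing} the offending $(-1)$--curves (\S\ref{sec:throw}), i.e.\ blowing them up in the threefold, twisting by the exceptional ruled surface, and contracting it the other way. This modifies the central fibre into configurations with up to nine components (the second, third and fourth degenerations of \S\ref{sec:2deg}, \S\ref{sec:3}, \S\ref{sec:4}, obtained by successively throwing the cubic, two conics, and four quartics), and it is the requirement that the system on the component $Z_4$ of the fourth degeneration become standard after a chain of Cremona transformations that produces the threshold $174/55$ (Lemma \ref{lem:Z4}). On top of this one needs the multi--component gluing analysis of \S\ref{sec:computation} (including the non--normal components created by $2$--throws), the transversality statements of \S\ref{sect:gentrasv}, and separate ad hoc arguments for the three residual cases $(d,m)=(174,55),(193,61),(348,110)$ --- the first of which is the emptiness of $\L(174;55^{10})$ and requires a delicate central--effectivity argument. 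None of these steps is a routine ``numerical optimization,'' so the proposal as written cannot be completed along the lines you describe.
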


Our proof relies on a fairly complete understanding of linear systems
on surfaces supporting an anticanonical curve;
in \S\ref{sec:GHH} we review the results we need,
whereas we devote \S \ref {sec:nagata}
to the connection between the conjecture of
Segre, Harbourne, Gimigliano and Hirschowitz
and the one of Nagata.

We note that the problem of computing the dimension of these linear systems
may be formulated on the blow--up of the plane at the multiple points;
if $E_i$ is the blow--up of the point $p_i$,
then the system $\L(d;m_1^{s_1},\ldots,m_r^{s_r})$ of plane curves
corresponds to $H^0(B,\O_B(dH-\sum_i s_iE_i))$
where $B$ is the blow--up of the plane,
and $H$ is the pullback of the line class.
In terms of the cohomology of this sheaf,
the system is non--empty if $H^0 \neq 0$,
and has the \emph{expected dimension} if $H^1 =0$: we say in this case
that the system is \emph{non--special}. 

In \S\ref{sec:firstdeg} we present a degeneration of the blown--up plane
which we will use to begin the analysis of the $H^0$. 
This degeneration has been introduced in \cite{CM98} and \cite{CM00}
for similar purposes,
and we will review it briefly.

Sections \ref {sec:throw}, \ref {sec:computation}  and \ref {sect:gentrasv}
introduce technical tools that are used in the remaining
sections, in which we discuss refinements of the degeneration,
necessary to prove the theorem under various
hypotheses for the ratio $d/m$. The proof eventually consists in 
showing non--speciality in certain limit situations.
The degenerations that we introduce can be considered to be inspired
by the minimal model program in birational geometry;
we perform explicit modifications (to the central fiber of our degeneration)
in order to make the limit bundle more nef.

The basic technique
can in principle be applied even beyond the bound $174/55$. 
This would require however the understanding of more and
more complicated degenerations, which, at the moment,
seem difficult to handle.

A corollary of our theorem is that
the linear system $\L(174;55^{10})$ is empty, 
which itself has been an open problem for about twenty years
(``le cas inviol\'e" according to A. Hirschowitz \cite {Hirsh}).
In conclusion, we would like to stress that,
more than this specific result,
it is the success of the technique we introduce here 
which provides the strongest evidence to date for the truth of the conjecture,
since, as we said, there seems to be no theoretical obstruction to carrying it further,
but only computational
complications.

\section{The Segre--Harbourne--Gimigliano--Hirschowitz conjecture}
\label {sec:GHH}

Let $p_1,...,p_k$ be points of the complex projective plane $\P^2$.
We will suppose that they are distinct or {\it infinitely near},
i.e., we have a sequence of morphisms
$f_i: S_i\to S_{i-1}$, $i=1,...,k$,
with $S_0=\P^ 2$ and $f_i: S_i\to S_{i-1}$
the blow--up of $S_{i-1}$ at a point $p_i$.
We will set $\bar\P=S_k$.
On $\bar \P$ we have \emph{$(-1)$--cycles} $E_1,...,E_k$
which are the pull--backs to $\bar \P$ of the points $p_1,...,p_k$,
and the class $H$ which is the pull-back of a line in $\P^2$.
Note that $H, E_1,...,E_k$
freely generate ${\rm Pic}(\bar \P)$. 

Let $d, m_1,...,m_k$ be non--negative integers.
We will denote by $\L(d;m_1p_1,...,m_kp_k)$
the complete linear system $\vert dH-\sum_{i=1}^k m_iE_i\vert$ on $\bar \P$.
We will use the same notation to denote the corresponding line bundle on $\bar \P$,
as well as the push--forward of $\vert dH-\sum_{i=1}^ km_iE_i\vert$ to $\P^ 2$,
i.e. the  linear system of plane curves of degree $d$
with multiplicity at least $m_i$ at $p_i$, $i=1,...,k$.  
Notice that $\vert-K_{\bar \P}\vert =\L(3;1^ k)$. 

The \emph{virtual dimension} of $\L(d;m_1p_1,...,m_kp_k)$ is
\[
v=v(\L(d;m_1p_1,...,m_kp_k))=\frac{d(d+3)}{2}-\sum_{i=1}^k \frac {m_i(m_i+1)}{2}
\]
and the \emph{expected dimension} is
\[
e(\L(d;m_1p_1,...,m_kp_k))
=\min \{ -1, v\}.
\]
One has 
\[
\dim(\L(d;m_1p_1,...,m_kp_k) \geq e(\L(d;m_1p_1,...,m_kp_k))
\]
and
$\L(d;m_1p_1,...,m_kp_k)$ is said to be \emph{special}
if strict inequality holds.
A special system $\L(d;m_1p_1,...,m_kp_k)$ is not empty,
and $h^1(\bar\P, \L(d;m_1p_1,...,m_kp_k))>0$.
We note that $h^2(\bar \P, \L(d;m_1p_1,...,m_kp_k))=0$ if, as we will assume, $d\geq 0$.

The dimension of $\L(d;m_1p_1,...,m_kp_k)$
is upper semicontinuous in the position of the points $p_1,...,p_k$.
Therefore one may expect to have special systems
for special positions of $p_1,...,p_k$, which is the case. 

By contrast, if the points $p_1,...,p_k$
are distinct and sufficiently general in $(\P^2)^k$
then the dimension of $\L(d;m_1p_1,...,m_kp_k)$ reaches a minimum.
When $p_1,...,p_k$ are general we will denote $\L(d;m_1p_1,...,m_kp_k)$
simply by $\L(d;m_1,...,m_k)$ and we will use the notation 
$\L(d;m_1^ {s_1},...,m_h^ {s_h})$ for repeated multiplicities.
A naive conjecture would be that this minimum dimension
coincides with the expected dimension.
This is well known to be false; 
a source of counterexamples is the following.
Suppose $\L=\L(d;m_1,...,m_k)$
is not empty.
Let $E$ be a smooth rational curve on $\bar \P$ with $E^ 2=-h$,
i.e. a \emph{$(-h)$--curve}. 
Assume that $E$ is a $(-1)$--curve and that $E\cdot \L = -n$ with $n \geq 2$.
Then $nE$ sits in the base locus of $\L$
and $h^1(\bar\P, \L) \geq \binom{n}{2}$.
A system $\L$ like this is said to be \emph{$(-1)$--special}.
Examples of this sort are $\L(2;2^2)$, $\L(4;2^5)$ etc. 

We can state now the following conjecture due to Harbourne, Gimigliano and Hirschowitz
(see  \cite{Har}, \cite {Gi}, \cite{Hirsh}).

\begin{conjecture}[HGH] \label{conj:GHH}
A system $\L(d;m_1,...,m_k)$ is special if and only if it is $(-1)$--special.
\end{conjecture}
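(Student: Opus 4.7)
The ``if'' direction is already contained in the discussion preceding the statement: a $(-1)$-special system $\L$ has $nE$ in its base locus with $n\geq 2$, and the standard calculation via the sequence $0 \to \L(-E) \to \L \to \L|_E \to 0$ yields $h^1(\bar\P,\L) \geq \binom{n}{2} > 0$, so $\L$ is special. The substance of the conjecture is the converse, and this is the direction I plan to attack.

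My approach is to work directly on the blow-up $\bar\P$ and to decompose $\L = M + F$ with $F$ the fixed part and $M$ the moving part. The central intermediate goal is to show that, under the assumption that $\L$ is \emph{not} $(-1)$-special, one may arrange that $M$ is nef. Granting this, a Kawamata--Viehweg-type vanishing (after checking positivity of $M - K_{\bar\P}$) will give $h^1(\bar\P,M)=0$, and the exact sequence $0 \to M \to \L \to \L|_F \to 0$, together with controlled cohomology of $\L|_F$ (a sum of rational curves with known self-intersection), should propagate non-speciality from $M$ to $\L$.

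To force $M$ to be nef I would pursue two complementary reductions. The first is Cremona reduction: iterate quadratic transformations centered at triples of base points, which preserve speciality while lowering $d$ in an appropriate sense, aiming at a ``standard'' form that either has at most nine base points (handled by the classical Castelnuovo--Nagata--Harbourne results cited in the introduction) or visibly has the expected dimension. The second, consonant with the method developed later in the paper, is to degenerate $\bar\P$ into a reducible surface, specialize the multiple points onto its components, and inductively conclude non-speciality of $\L$ from non-speciality of the restricted systems on each piece, performing MMP-style modifications of the central fiber so as to render the limit bundle nef.

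The main obstacle is that neither reduction terminates in full generality: Cremona orbits demand delicate bookkeeping and can fail to reach a standard form when the multiplicities $m_i$ are distributed irregularly, while each degeneration typically spawns new sub-configurations which themselves must be treated by further degenerations, producing an unbounded cascade of cases. This is precisely the reason the HGH conjecture remains open; the authors themselves remark that their technique extends ``in principle'' beyond the bound $174/55$ but requires handling progressively more intricate central fibers, for which no uniform scheme is presently available. The main theorem of this paper should thus be read as an instance of this general plan executed successfully in the homogeneous ten-point range $d/m \geq 174/55$, rather than a full proof of the conjecture.
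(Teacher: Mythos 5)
The statement you were asked about is Conjecture \ref{conj:GHH}, and the paper offers no proof of it: it is stated as an open conjecture (the SHGH conjecture), and the body of the paper establishes only the very particular case recorded in Theorem \ref{thm:main}. So there is no ``paper's own proof'' to compare against, and your proposal has to be judged as an attempt on an open problem.

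Your treatment of the ``if'' direction is correct and is exactly the discussion the paper gives just before stating the conjecture. The gap is in the converse, and you have in fact located it yourself: the assertion that ``one may arrange that $M$ is nef'' when $\L$ is not $(-1)$--special is precisely the open content. For $k\leq 9$ this works because $\bar\P$ carries a reduced irreducible anticanonical curve, which is what powers Propositions \ref{prop:antiKnef} and \ref{prop:leq9b} and hence Corollary \ref{cor:GHHkleq9}; for $k\geq 10$ general points there is no anticanonical curve, the classification of curves meeting $\L$ negatively is unknown (even the nonexistence of negative curves other than $(-1)$--curves is conjectural and tied to Nagata), and neither of your two reductions closes. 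Cremona reduction at best brings you to a standard system, but non--speciality of standard systems with $\geq 10$ general points is Conjecture \ref{conj:GHH2}, which the paper notes is \emph{equivalent} to the statement you are trying to prove, so that route is circular; and the degeneration method, as the paper itself stresses, spawns an unbounded cascade of central fibers with no known uniform termination. Since your final paragraph candidly concedes all of this, the proposal is best read as a correct account of why the conjecture is open and of the strategy the paper pursues in its special case, rather than as a proof; as a proof it has an essential, unfilled gap.
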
 

There is an earlier conjecture due to B. Segre (see \cite {Seg}),
which, as shown in \cite{cmsegre}, is equivalent to the previous one. 

\begin{conjecture} [Segre] \label{conj:Segre}
If a system $\L(d;m_1,...,m_k)$ is special then its general member is non--reduced.
\end{conjecture}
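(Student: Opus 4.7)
The plan is to prove the contrapositive: assuming a general member $C$ of $\L = \L(d;m_1,\ldots,m_k)$ is reduced, derive $h^1(\bar\P,\L)=0$. Write $C = F + M$ with $F$ the fixed part and $M$ a general element of the movable subsystem $|\L - F|$; reducedness forces every irreducible component of $F$ to appear with multiplicity one. For each irreducible $D\subset F$ the short exact sequence
$$0\to \L - D\to \L\to \L|_D\to 0$$
combined with the fact that $D$ is in the base locus (so $h^0(\L)=h^0(\L-D)$) yields $h^1(\L)=h^1(\L-D)$ as soon as $h^0(\L|_D)=h^1(\L|_D)=0$. If $D$ is a $(-1)$-curve with $\L\cdot D\leq -2$, then $2D$ lies in the base locus and $D$ appears in $C$ with multiplicity at least two, contradicting reducedness; excluding this case, one checks that $\deg_D(\L|_D)$ lies in the range $(2g(D)-2,\,0)$, so both cohomologies vanish and the induction step succeeds. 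Iterating reduces the problem to the case where $\L$ itself is movable with no fixed components.

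With $\L$ movable, write $\L = K_{\bar\P} + A$ where $A = \L - K_{\bar\P} = \L(d+3;(m_1-1),\ldots,(m_k-1))$, and apply Kawamata-Viehweg vanishing to conclude $h^1(\bar\P,\L)=0$. Bigness of $A$ follows from $\L$ being non-empty and movable together with $d\gg 0$. For nefness, one computes: on each exceptional $E_i$, $A\cdot E_i=m_i-1\geq 0$; on any irreducible curve $E$ with $E^2\geq 0$, Bezout applied to the image in $\P^2$ gives the desired inequality; on a $(-1)$-curve $E$ not in the fixed locus, $\L\cdot E\geq 0$ (by reducedness of the general member, as above), whence $A\cdot E = \L\cdot E + 1\geq 1$. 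Collecting these cases, nefness is established against every irreducible curve of self-intersection $\geq -1$.

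The main obstacle is nefness of $A$ against irreducible $(-h)$-curves with $h\geq 2$. On the blow-up of $\P^2$ at $k\geq 10$ general points there exist infinitely many such curves, and bounding $\L\cdot E$ uniformly from below in terms of the degree-multiplicity data is, in essence, equivalent to a Nagata-type statement; indeed for $k=10$ points of equal multiplicity the required inequality is precisely the Nagata conjecture considered in \S\ref{sec:nagata}. Consequently a direct proof of the Segre conjecture along these lines cannot bypass Nagata, and a complete argument would require either a new vanishing theorem sensitive to reducedness of the general member (enabling one to short-circuit the nefness hypothesis), or an extension of the degeneration machinery of the present paper to arbitrary ratios $d/m$ rather than only $d/m\geq 174/55$. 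It is the latter route that we pursue here.
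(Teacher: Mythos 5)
What you have set out to prove is Conjecture \ref{conj:Segre}, and in this paper it is exactly that: a conjecture. The paper contains no proof of it; it only records the statement, cites \cite{cmsegre} for its equivalence with Conjecture \ref{conj:GHH}, and then proves the far more special Theorem \ref{thm:main} (ten equal multiplicities, $d/m\geq 174/55$) by an elaborate degeneration argument. So there is no proof in the paper to compare yours against, and a complete argument would settle a long--standing open problem. Your proposal does not claim to be complete: its final paragraph concedes that the decisive step is out of reach and defers it to ``an extension of the degeneration machinery to arbitrary ratios $d/m$,'' which is precisely what the authors say they cannot presently do (going beyond $174/55$ is described as posing unresolved complications). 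As written, this is an outline of a strategy, not a proof.

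Beyond this overall incompleteness, the two steps you do carry out are both unjustified, and both are circular in the same way: they presuppose the conjectural description of negative curves on blow--ups of $\P^2$ at $k\geq 10$ general points. First, in the fixed--part induction, to pass from $h^1(\L)$ to $h^1(\L-D)$ you need $h^0(\L|_D)=h^1(\L|_D)=0$, and for irreducible $D$ your stated degree range $2g(D)-2<\deg_D(\L|_D)<0$ is non--empty only when $D$ is rational with $\L\cdot D=-1$. That every multiplicity--one fixed component of such a system is a $(-1)$--curve meeting $\L$ in degree exactly $-1$ is, for $k\geq 10$ general points, itself an open statement implied by SHGH (cf.\ \cite{def}); assuming it assumes what is to be proved. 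Second, in the vanishing step, Bezout gives $\L\cdot E\geq 0$ for irreducible $E$ with $E^2\geq 0$ not in the base locus, but what you need is $A\cdot E=\L\cdot E-K_{\bar\P}\cdot E\geq 0$, and $-K_{\bar\P}\cdot E=3\delta-\sum_i\mu_i$ is not under control: nothing prevents an irreducible curve with $E^2\geq 0$ from having $\sum_i\mu_i>3\delta$, and indeed the class $13H-4\sum_{i=1}^{10}E_i$ (non--empty of dimension $4$ by Theorem \ref{thm:main}, with self--intersection $9$ and anticanonical degree $-1$) is expected to be represented by such a curve. Your further assertion that there exist infinitely many irreducible $(-h)$--curves with $h\geq 2$ on a general blow--up is contrary to the expected picture (SHGH implies there are none), and bigness of $A$ cannot be extracted from ``$d\gg 0$,'' which is not a hypothesis of the conjecture. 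In short, the nefness of $\L-K_{\bar\P}$ that Kawamata--Viehweg requires carries essentially the full strength of the conjecture, and nothing in your argument supplies it.
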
 

We will refer to either one of the two above conjectures as to the 
Segre--Harbourne--Gimigliano--Hirschowitz (SHGH) conjecture.

In the homogeneous case $m_1=\dots =m_k=m$ with $k\geq 10$, this conjecture
implies (see \cite {CM00}) the following.

\begin{conjecture}  \label{conj:Segrehom} If $k\geq 10$ a system $\L(d;m^ k)$ 
is never special.\end{conjecture}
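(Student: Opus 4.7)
The plan is to assume Conjecture~\ref{conj:GHH} and deduce Conjecture~\ref{conj:Segrehom} by contradiction. Suppose $\L := \L(d;m^k)$ is special for some $k \geq 10$. By (HGH), $\L$ is $(-1)$-special, so there is a $(-1)$-curve $E = aH - \sum_{i=1}^k b_i E_i$ with $E \cdot \L = -n$ and $n \geq 2$. The numerical conditions $\sum b_i^2 = a^2 + 1$ and $\sum b_i = 3a - 1$ give
\[
E \cdot \L = a(d - 3m) + m,
\]
so $E \cdot \L \leq -2$ forces $a \geq 1$, $m \geq 1$, and $d < 3m$ (in particular $v(\L) < 0$).

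The key is to exploit the $S_k$-symmetry of $\L$. First I would show the orbit $O := S_k \cdot E$ has size $r \geq 2$. An $S_k$-invariant $(-1)$-class $aH - b\sum E_i$ satisfies $a^2 - kb^2 = -1$ and $3a - kb = 1$; eliminating $b$ yields $(k-9)a^2 + 6a + (k-1) = 0$. For $k = 10$ the unique integer solution is $a = -3$, $b = -1$, giving $E = K_{\bar\P}$, which cannot be effective since $K_{\bar\P} \cdot H = -3 < 0$ while $H$ is nef. For $k \geq 11$ the discriminant $36 - 4(k-9)(k-1)$ is strictly negative, so no $S_k$-invariant $(-1)$-classes exist. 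By the $S_k$-invariance of $\L$, each $E' \in O$ is a $(-1)$-curve with $E' \cdot \L = -n$, appearing in the base locus of $\L$ with the same exact multiplicity $N \geq n$.

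Set $D := \sum_{E' \in O} E'$; being $S_k$-invariant, it has class $raH - s\sum E_i$ with $s = r(3a-1)/k \in \mathbb{Z}$. Since each $E' \in O$ is a fixed component of $\L$ of multiplicity $N$, the divisor $ND$ is contained in every member of $\L$, so $\L - ND$ is effective and no $E'$ remains a fixed component; hence $E \cdot (\L - ND) \geq 0$. A direct computation yields
\[
E \cdot D = ra^2 - s(3a-1) = \frac{r\bigl(ka^2 - (3a-1)^2\bigr)}{k},
\]
and since $ka^2 - (3a-1)^2 = (k-9)a^2 + 6a - 1 \geq a^2 + 6a - 1 \geq 6$ for $k \geq 10$ and $a \geq 1$, this intersection number is a positive integer, so $E \cdot D \geq 1$. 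Therefore
\[
E \cdot (\L - ND) = -n - N(E \cdot D) \leq -2 - 2 = -4 < 0,
\]
the desired contradiction. The delicate point is verifying carefully that the base-locus multiplicities along the orbit coincide and that $\L - ND$ is effective as a divisor; both follow from the $S_k$-uniformity of $\L$ together with the irreducibility of distinct $(-1)$-curves in $O$ and their non-negative pairwise intersections. The argument fails precisely when $ka^2 - (3a-1)^2 \leq 0$, possible only for $k \leq 9$, consistent with the well-known existence of $(-1)$-special homogeneous systems on nine or fewer points.
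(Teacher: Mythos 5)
Your argument is correct. Note first that the paper itself gives no proof of this implication; it simply cites \cite{CM00}, where the statement is deduced from the non--existence of \emph{homogeneous $(-1)$--configurations} on the blow--up of $k\geq 10$ general points. Your proof is essentially a self--contained version of that argument: you uniformize the offending $(-1)$--curve over its $S_k$--orbit and derive a numerical contradiction from the $S_k$--invariance of the class of $D=\sum_{E'\in O}E'$. The one genuine improvement is that you do not need the orbit to be a $(-1)$--configuration (i.e.\ pairwise disjoint): by working with the exact fixed--part multiplicities you get the contradiction $0\leq E\cdot(\L-ND)=-n-N(E\cdot D)<0$ directly from $E\cdot D=r\bigl(ka^2-(3a-1)^2\bigr)/k>0$, which holds for all $k\geq 10$ and $a\geq 1$ and also disposes of the case $r=1$ without the separate discriminant discussion. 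The steps you flag as delicate (monodromy makes every class in the orbit an irreducible $(-1)$--curve with the same intersection number and the same base--locus multiplicity) are standard and are used elsewhere in the paper, e.g.\ in the proof of Proposition \ref{prop:nagseg}. You could, however, have finished much faster using material already in the paper: your identity $E\cdot\L=a(d-3m)+m\leq -2$ with $a\geq 1$, $m\geq 1$ already forces $d<3m$, and Remark \ref{rem:geq9} (compare Corollary \ref{cor:ratio<=3}) says unconditionally that $\L(d;m^k)$ is empty for $k\geq 10$ and $d\leq 3m$, contradicting the fact that a special system is non--empty. So the orbit analysis, while correct and closer in spirit to the cited source, is not strictly needed.
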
 

It is useful to give a different equivalent formulation
of Conjecture \ref{conj:GHH},
involving Cremona transformations.
Consider a non--empty system $\L(d;m_1p_1,...,m_kp_k)$
and assume that $d\geq m_1\geq m_2\geq ...\geq m_k$.
The system will be said to be \emph{Cremona reducible}
if $m_1+m_2+m_3>d$
and there is an irreducible conic passing through $p_1,p_2,p_3$
(which is certainly the case if $p_1,p_2,p_3$ are distinct and not collinear).
The reason for the name is that
the quadratic transformation based at $p_1,p_2,p_3$ sends
$\L(d;m_1p_1,...,m_kp_k)$ to a new system $\L(d';m'_1p'_1,...,m'_kp'_k)$
with $d'=2d-(m_1+m_2+m_3)<d$,
and the dimension of the two systems are the same.
If $m_1+m_2+m_3\leq d$ the system is said to be {\it standard}.

Note the following result (see \cite{Hirsh}, \cite{Har2}):

\begin{proposition}\label{prop:standard}
A standard system is not $(-1)$--special.
\end{proposition}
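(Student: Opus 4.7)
The plan is to prove the stronger statement that $E \cdot \L \geq 0$ for every $(-1)$--curve $E$ on $\bar\P$, which rules out the inequality $E \cdot \L \leq -2$ required for $(-1)$--speciality. Writing $E \sim aH - \sum_i b_i E_i$ and combining $E^2 = -1$ with adjunction (which gives $E \cdot K_{\bar\P} = -1$, using $K_{\bar\P} = -3H + \sum E_i$), one extracts the two numerical identities
\[
\sum_i b_i^2 = a^2 + 1, \qquad \sum_i b_i = 3a - 1.
\]
The case $a = 0$ is immediate: the identities force exactly one $b_j = -1$ with the remaining entries vanishing, so $E = E_j$ and $E \cdot \L = m_j \geq 0$.

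For $a \geq 1$ the curve $E$ is irreducible and is not among the components of any $E_i$, so each $b_i = E \cdot E_i$ is a non-negative integer; and $b_i^2 \leq a^2 + 1 < (a+1)^2$ forces $b_i \leq a$. At this point I would invoke the rearrangement inequality: it suffices to treat the case in which the $b_i$'s are sorted decreasingly, since that choice maximizes $\sum b_i m_i$ among permutations and thus minimizes $E \cdot \L$. The bound $m_i \leq m_3$ for $i \geq 3$ then yields
\[
\sum_i b_i m_i \;\leq\; b_1 m_1 + b_2 m_2 + m_3 (3a - 1 - b_1 - b_2).
\]

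Invoking the standard hypothesis $d \geq m_1 + m_2 + m_3$ to bound $ad$ from below, this rearranges to
\[
E \cdot \L \;\geq\; (a - b_1) m_1 + (a - b_2) m_2 + (1 - 2a + b_1 + b_2) m_3,
\]
whose three coefficients sum to $1$ and whose first two coefficients are non-negative (thanks to $b_i \leq a$). A brief case split on the sign of $c_3 := 1 - 2a + b_1 + b_2$ closes the argument: if $c_3 \geq 0$ the expression is termwise non-negative, while if $c_3 < 0$ one rewrites it as $c_1(m_1 - m_3) + c_2(m_2 - m_3) + m_3$, which is non-negative since $m_1, m_2 \geq m_3 \geq 0$. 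The delicate point---and what I expect to be the main obstacle---is identifying precisely this three-term decomposition: coarser estimates such as $\sum b_i m_i \leq m_1 \sum b_i$ are too weak to exploit the standard hypothesis, and splitting off $b_1, b_2, b_3$ separately yields coefficients that no longer conveniently sum to $1$. The decomposition above is exactly the one that dovetails with the inequality $d \geq m_1 + m_2 + m_3$.
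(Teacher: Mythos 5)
Your argument is correct, and it is worth noting at the outset that the paper itself gives no proof of this proposition: it is quoted as a known result with a pointer to \cite{Hirsh} and \cite{Har2}, so there is no internal argument to compare against, and your computation stands as a legitimate self-contained substitute. The two identities $\sum_i b_i^2=a^2+1$ and $\sum_i b_i=3a-1$ are exactly $E^2=-1$ and adjunction; for $a\geq 1$ the irreducibility of $E$ (and the fact that $E$ is not a component of any $E_i$) gives $b_i\geq 0$, whence $b_i\leq a$; the rearrangement reduction is legitimate because the target inequality $ad\geq\sum_i b_im_i$ is only harder when the $b_i$ are co-sorted with the $m_i$; and your final estimate can be packaged as
\[
E\cdot\L\;\geq\;(a-b_1)(m_1-m_3)+(a-b_2)(m_2-m_3)+m_3\;\geq\;0,
\]
which absorbs both of your sign cases at once (the split on $c_3$ is not actually needed) and is sharp, e.g.\ for $\L(3;1^9)$ paired against the exceptional class $(6;3,2^7)$. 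The cited references reach the statement by a different, more structural route, exploiting the fact that the $(-1)$-classes form a single orbit under quadratic transformations and that the standard classes form a fundamental chamber pairing non-negatively with that orbit; your proof trades that Weyl-group/Cremona machinery for the direct three-term bookkeeping you describe, which is arguably more elementary and also delivers the slightly stronger conclusion $E\cdot\L\geq 0$ rather than merely $E\cdot\L\geq -1$. Two small points you should make explicit: $a=E\cdot H\geq 0$ because $H$ is nef, so the cases $a=0$ and $a\geq 1$ are exhaustive; and when $k<3$ one reads $m_3=0$ (and the tail sum as empty), so the estimate degenerates harmlessly.
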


Hence an equivalent formulation of Conjecture \ref{conj:GHH}, thus of the SHGH conjecture, is:

\begin{conjecture}\label{conj:GHH2}
A standard system with general base points is not special.
\end{conjecture}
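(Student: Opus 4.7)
The plan is to establish, for a standard system $\L=\L(d;m_1,\ldots,m_k)$ with general base points (assuming $m_1\geq m_2\geq\cdots\geq m_k$ and $d\geq m_1+m_2+m_3$), that $h^1(\bar\P,\L)=0$, which together with the vanishing of $h^2$ already noted in the text is equivalent to non--speciality. I will outline three avenues, each of which converts the numerical positivity encoded in the standardness hypothesis into the cohomological vanishing that non--speciality demands.

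First I would seek a direct vanishing argument on $\bar\P$. Writing $D=dH-\sum m_iE_i$, the standardness condition $d\geq m_1+m_2+m_3$ is tailored to keep $D$ ``maximally positive'': no quadratic transformation centered at the three heaviest base points lowers the degree. The goal is to upgrade this to nefness of $D$, and then apply Kawamata--Viehweg vanishing to $D-K_{\bar\P}=(d+3)H-\sum(m_i+1)E_i$ to conclude $h^1(\bar\P,D)=0$. The technical heart is verifying $D\cdot C\geq 0$ for every irreducible curve $C\subset\bar\P$. By Proposition \ref{prop:standard} no $(-1)$--curve forces $(-1)$--speciality, but a priori $D$ could still intersect some $(-1)$--curve negatively; ruling this out requires controlling the full list of $(-1)$--classes in $\mathrm{Pic}(\bar\P)$, which is itself a deep combinatorial problem for $k\geq 10$.

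Second, I would implement the degeneration program sketched in the paper's introduction and developed in \S\ref{sec:firstdeg} and the later sections. Construct a flat family $\X\to\Delta$ over a disc whose generic fiber is $\bar\P$ and whose central fiber $\X_0$ is a reducible surface built from blow--ups of $\P^2$ and Hirzebruch surfaces $\F_n$ glued along rational curves; distribute the $k$ multiplicity--$m_i$ points among the components, and extend the bundle $\O_{\bar\P}(D)$ to a limit line bundle on $\X_0$. Upper semicontinuity of $h^1$ bounds $h^1(\bar\P,D)$ by $h^1(\X_0,\cdot)$, which a Mayer--Vietoris computation reduces to $h^1$'s on each component and twists along the double curves. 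The combinatorial task is to arrange the distribution so that every sub--system that appears on a component is again standard, of strictly smaller invariants, whereupon an induction on $d+\sum m_i$ closes the argument. This is precisely the ``MMP--inspired'' modification of the central fiber the authors refer to: one keeps adjusting $\X_0$ until the limit class is nef on every component.

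Third, in the cases where the previous two approaches stall, I would try to specialize some base points to infinitely near configurations (as allowed by the setup of \S\ref{sec:GHH}), compute $h^1$ for that specialization using Cremona transformations at other triples of points, and then invoke semicontinuity in the reverse direction to transfer the vanishing back to the general configuration. The main obstacle—indeed the reason this statement remains a conjecture rather than a theorem—is that none of these strategies is known to succeed uniformly. The vanishing route demands full control of $(-1)$--classes; the degeneration route, as the authors explicitly remark, produces degenerations of rapidly growing combinatorial complexity as $d/m$ decreases toward the Nagata bound, with no known mechanism guaranteeing standardness of every sub--system generated. Theorem \ref{thm:main} represents the current frontier of the second strategy, and any full proof of Conjecture \ref{conj:GHH2} would presumably require a new structural input beyond the techniques available here.
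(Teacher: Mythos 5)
You have correctly identified the essential point: this statement is a \emph{conjecture} (an equivalent reformulation of the SHGH conjecture), and the paper neither proves it nor claims to. The paper's only "proof content" attached to it is the equivalence with Conjecture \ref{conj:GHH}, which rests on Proposition \ref{prop:standard} (a standard system is not $(-1)$--special) together with Cremona reduction preserving dimension and virtual dimension; beyond that, the paper proves the conjecture only in special cases. Your three strategies map accurately onto what the paper actually accomplishes in those cases: your first avenue (nefness plus anticanonical vanishing) is precisely how the paper settles $k\leq 9$, via Propositions \ref{prop:antiKnef} and \ref{prop:leq9b} and Corollary \ref{cor:GHHkleq9}, where the anticanonical hypothesis substitutes for the Kawamata--Viehweg input and Proposition \ref{prop:leq9b} supplies exactly the control of negative curves you flag as the obstruction; your second avenue (degeneration with MMP-style modifications of the central fiber) is the engine behind Theorem \ref{thm:main} for $\L(d;m^{10})$ with $d/m\geq 174/55$. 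Your concluding assessment --- that no known mechanism makes either strategy work uniformly, and that the combinatorial complexity of the degenerations grows without a structural principle to tame it --- matches the authors' own framing in the introduction. So there is no gap to report in the usual sense: you have not proved the statement, but neither has anyone, and your write-up is an accurate account of both the state of the problem and of how the paper's partial results fit into it. The one small addition worth making: if you want to record what \emph{is} provable here, it is the equivalence of this formulation with Conjecture \ref{conj:GHH}, which is a short argument from Proposition \ref{prop:standard} and the invariance of dimension under quadratic transformations, and which your proposal mentions only in passing.
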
 

It  goes back to Castelnuovo
(see \cite{castelnuovo}),
that Conjecture \ref{conj:GHH} holds if $k \leq 9$.
More recent treatments  can be found in
\cite{nagata2}, \cite{Gi}, \cite{Har}, \cite{Har2}.
To be specific, one has the following more general results. 

A standard system $\L=\L(d;m_1p_1,...,m_kp_k)$ is said to be \emph{excellent}
[resp. \emph{almost excellent}]
if $\L \cdot K_{\bar\P} < 0$
[resp.  if $\L \cdot K_{\bar\P}\leq 0$].
In other words, $\L$ is excellent [resp. almost excellent] if 
\[
3d-\sum_{i=1}^k m_i > 0 \quad [{\rm resp.}  \quad 3d-\sum_{i=1}^k m_i \geq 0].
\]
Moreover, one says that we are in the \emph{anticanonical case} 
if there is a curve $D$ in the linear system $\vert -K_{\bar\P}\vert=\L(3;p_1,\ldots,p_k)$.
If $k \leq 9$ one is in the anticanonical case.
If there is a  reduced and irreducible curve $D\in  \vert -K_{\bar\P}\vert$,
one says that we are in the {\it strong anticanonical case}.
Then $p_1,...,p_k$ are smooth points of a reduced, irreducible cubic curve in $\P^ 2$.
If $p_1,...,p_k$ are general points and $k\leq 9$,
then we are in the strong anticanonical case.

Recall that a line bundle $\L$ is \emph{nef} if for every irreducible curve $C$ on $\bar\P$, one has $\L\cdot C\geq 0$. 

For the following result, see \cite{Har2}.

\begin{proposition}\label{prop:strongantiK}
Suppose we are in the strong anticanonical case
and consider a system $\L=\L(d;m_1p_1,...,m_kp_k)$. Then:
\begin{itemize}
\item [(i)] if $\L$ is standard, then $\L$ is \emph{effective}, i.e. $\dim (\L)\geq 0$;
\item [(ii)] $\L$ is almost excellent if and only if it is nef;
\item [(iii)] if $\L$ is excellent, then it is non--special.
\end{itemize}
\end{proposition}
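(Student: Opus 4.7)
In the strong anticanonical case there is a reduced irreducible divisor $D\in|-K_{\bar\P}|$ of arithmetic genus $1$, and since $p_1,\dots,p_k$ are smooth points of $D$, the restriction sequence
$$0\to \L+K_{\bar\P}\to \L\to \L|_D\to 0$$
is the central tool. On the arithmetic genus $1$ curve $D$, a line bundle of positive degree has vanishing $H^1$ and $h^0$ equal to its degree, while one of degree $0$ has $h^0\leq 1$. Since $\L\cdot D=-\L\cdot K_{\bar\P}$, the hypothesis ``almost excellent'' is exactly $\deg(\L|_D)\geq 0$ and ``excellent'' is $\deg(\L|_D)>0$.

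For (i), I would induct on $d$. If $\L\cdot D<0$ then $\L|_D$ has no sections, so irreducibility of $D$ forces $D$ to be a fixed component of $|\L|$; then $h^0(\L)=h^0(\L+K_{\bar\P})$ and I apply induction, after noting that standardness is preserved under this operation (the condition $m_1+m_2+m_3\leq d$ is unchanged upon passing from $\L$ to $\L+K_{\bar\P}$ when all $m_i\geq 1$; the remaining cases are easier). If $\L\cdot D\geq 0$, then generically $H^0(\L|_D)\neq 0$, and the only delicate subcase is $\deg(\L|_D)=0$ with $\L|_D$ non-trivial, which is handled directly. Lifting via the exact sequence, together with the inductive effectivity of $\L+K_{\bar\P}$, then produces a global section of $\L$.

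For (ii), the implication nef $\Rightarrow$ almost excellent is immediate since $D$ is irreducible. For the converse, adjunction shows that any irreducible $C\neq D$ with $C^2<0$ is a smooth rational curve with $C\cdot D=-C^2-2\geq 0$, so $C^2\in\{-1,-2\}$. For $(-2)$-curves, $C\cdot D=0$ places $C$ in a very restricted locus; for $(-1)$-curves, Proposition \ref{prop:standard} already gives $\L\cdot C\geq -1$, and a short combinatorial check, using the explicit description of $(-1)$-curves as proper transforms of plane rational curves through subsets of the $p_i$ together with the almost-excellent hypothesis, rules out the value $-1$.

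For (iii), with $\L$ excellent we have $\L$ nef by (ii) and $\deg(\L|_D)>0$, so $H^1(\L|_D)=0$ and the cohomology sequence reduces $H^1(\L)=0$ to $H^1(\L+K_{\bar\P})=0$. I would induct on $-\L\cdot K_{\bar\P}$: at each step the anticanonical degree strictly decreases and the class either remains in an inductive case or becomes small enough to handle directly via Riemann--Roch on $\bar\P$. The main obstacle is precisely here: after subtracting $D$, the class $\L+K_{\bar\P}$ can leave the excellent regime (and in some ranges of $k$ even the almost-excellent regime), so the induction must be coupled with a separate argument, using parts (i) and (ii) to control $h^0$ and $h^1$ in the transitional cases. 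This bookkeeping, together with the borderline $\deg(\L|_D)=0$ analysis inherited from (i), is the technical heart of the proof.
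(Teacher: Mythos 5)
The paper gives no proof of this proposition: it is quoted directly from Harbourne \cite{Har2} (``For the following result, see \cite{Har2}''), so there is no internal argument to measure yours against. Your skeleton --- restriction to the irreducible anticanonical curve $D$ via $0\to\L+K_{\bar\P}\to\L\to\L|_D\to 0$ and induction --- is indeed the standard route, and part (i) essentially works as you describe (most simply: $\L=(\L+K_{\bar\P})+D$, so inductive effectivity of $\L+K_{\bar\P}$ suffices, with the low--degree and few--point cases, where $\L+K_{\bar\P}$ may fail to be effective as for $\L(3;3p_1)$, checked by hand).

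The two places where you defer to a ``short combinatorial check'' and to unspecified ``bookkeeping'' are, however, exactly where the content of the proposition lives, so the proposal has genuine gaps. In (ii), Proposition \ref{prop:standard} cannot be used as you use it: ``not $(-1)$--special'' only excludes $(-1)$--curves $C$ with $\L\cdot C\leq -2$, and upgrading $\L\cdot C\geq -1$ to $\L\cdot C\geq 0$ is not an afterthought --- it is the theorem. What is actually needed is the Weyl--group fact that a \emph{standard} class pairs non--negatively with every exceptional class and every effective root: standardness says that $\L$ lies in the fundamental chamber for the group generated by the reflections in $r_0=H-E_1-E_2-E_3$ and $r_i=E_i-E_{i+1}$, every effective $(-2)$--class is a non--negative combination of these simple roots, and every $(-1)$--class (for $k\geq 3$) lies in $E_k+\sum_i\mathbb{Z}_{\geq 0}\,r_i$, whence $\L\cdot C\geq \L\cdot E_k=m_k\geq 0$. (Note also that almost excellence enters only through $\L\cdot D\geq 0$; it plays no role in the $(-1)$-- and $(-2)$--curve estimates, contrary to what your sketch suggests.) In (iii) you explicitly leave the induction unresolved; the missing ingredient is precisely Harbourne's vanishing theorem for anticanonical surfaces (Theorem III.1 of \cite{Har3}, which this paper invokes separately as Proposition \ref{prop:antiKnef}): a nef class $F$ with $F\cdot K_{\bar\P}<0$ on a surface with $-K_{\bar\P}$ effective has $h^1(F)=0$. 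Without supplying or citing that argument, part (iii) is not proved. In short: correct strategy, but the two decisive lemmas are asserted rather than established.
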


\begin{remark}\label{rem:leq9}
{\rm If $k\leq 9$ and $p_1,...,p_k$ are general points,
then a standard system $\L$ is almost excellent,
and actually excellent, unless $k=9$ and $\L$ is a multiple of the anticanonical system.
Thus the SHGH conjecture
follows by Proposition \ref {prop:strongantiK} in this case.}
\end{remark}

\begin{remark}\label{rem:geq9}
{\rm Assume that $k\geq 9$, $p_1,...,p_k$ are general points,
$m_i \geq m_{i+1}$ for each $i$, and $d\leq m_{k-2}+m_{k-1}+m_k$.
Consider $\L=\L(d;m_1,...,m_k)$ and notice that $\L\cdot K_{\bar \P}\geq 0$.
If $k = 9$ and $\L$ is effective, then one concludes that $\L\cdot K_{\bar \P}=0$,
hence $\L$ is a multiple of the anticanonical system,
and has dimension zero.
If $k\geq 10$ then, by considering only the first $9$ points,
the $k=9$ analysis shows that $\L$ is empty.}
\end{remark}

This last remark gives a first result:

\begin{corollary}\label{cor:ratio<=3}
If $d/m \leq 3$ then the linear system $\L(d;m^{10})$ is empty.
\end{corollary}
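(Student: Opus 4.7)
The plan is to deduce this immediately from Remark \ref{rem:geq9}. Set $k=10$ and $m_1=\dots=m_{10}=m$ in that remark; the monotonicity hypothesis $m_i\geq m_{i+1}$ is trivial, and the degree hypothesis $d\leq m_{k-2}+m_{k-1}+m_k$ specializes to $d\leq 3m$, which is precisely $d/m\leq 3$. Since $k=10\geq 9$, the remark concludes directly that $\L(d;m^{10})$ is empty.

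For completeness I would unpack why the $k=9$ step in the remark works in this homogeneous situation. Restricting to the first nine general points, one has $\L(d;m^9)\cdot K_{\bar\P}=-3d+9m\geq 0$. We are in the strong anticanonical case: $-K_{\bar\P}=\L(3;1^9)$ is represented by the proper transform $C$ of the unique smooth cubic through the nine general points, and $C$ is an irreducible $(0)$-curve. Any effective $D\in\L(d;m^9)$ satisfies $D\cdot C\leq 0$, and, peeling off $C$-components using $C^2=0$ and the irreducibility of $C$, one sees that $D$ must be a nonnegative integer multiple of $C$; this forces $d=3m$ and $D=mC$.

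It then remains only to handle the passage from $9$ to $10$ points. Any effective divisor in $\L(d;m^{10})$ is in particular an effective divisor in $\L(d;m^9)$. When $d<3m$ the latter is empty, so the former is too. In the boundary case $d=3m$ the only candidate is $mC$, but the tenth general point $p_{10}$ does not lie on $C$ (by genericity, since $C$ depends only on the first nine points), so $mC$ has multiplicity zero at $p_{10}$ and is not in $\L(3m;m^{10})$. Hence the system is empty in both cases.

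There is no real obstacle here beyond quoting Remark \ref{rem:geq9}; the only point that deserves explicit mention is the boundary case $d=3m$, where one must invoke the genericity of $p_{10}$ relative to the cubic through $p_1,\dots,p_9$ to rule out the $m$-fold anticanonical divisor.
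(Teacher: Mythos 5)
Your proof is correct and follows exactly the paper's route: the paper's entire justification for this corollary is the sentence ``This last remark gives a first result,'' i.e.\ a direct application of Remark~\ref{rem:geq9} with $k=10$ and equal multiplicities, which is precisely what you do. Your extra unpacking of the nine-point case and of the boundary case $d=3m$ (where genericity of $p_{10}$ relative to the cubic through $p_1,\dots,p_9$ kills the divisor $mC$) correctly fills in what the remark leaves implicit; the only slightly glossed point is that in the case $D\cdot C=0$ the ``peeling'' alone does not force $D$ to be a multiple of $C$ --- one also needs the generality of the nine points (the classical Castelnuovo argument the paper is citing) --- but this does not affect the validity of the corollary's proof.
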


The following two propositions
allow us to prove the SHGH conjecture
in its original form \ref{conj:GHH} in the case $k\leq 9$. 

\begin{proposition}\label{prop:antiKnef}
Suppose we are in the anticanonical case, and 
$\L=\L(d;m_1p_1,...,m_kp_k)$ is an effective and nef system.
If either $\L\cdot K_{\bar \P}\leq -1$ or $p_1,...,p_k$ are general points
(hence $k\leq 9$ since we are in the anticanonical case),
then $\L$ is non--special.
\end{proposition}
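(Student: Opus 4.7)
The approach is to combine the Kodaira-type exact sequence coming from an effective anticanonical divisor $D \in |-K_{\bar\P}|$ with a case split on the sign of $\L\cdot K_{\bar\P}$. The key sequence is
\[0 \to \O_{\bar\P}(\L + K_{\bar\P}) \to \O_{\bar\P}(\L) \to \O_D(\L) \to 0.\]
Since $\L$ is effective (and, without loss of generality, nonzero), $h^0(-\L) = 0$, and since $K_{\bar\P}$ is not effective on a rational surface, $h^0(K_{\bar\P}-\L)=0$. Serre duality therefore gives $h^2(\L+K_{\bar\P}) = h^2(\L) = 0$, and the long exact sequence yields
\[h^1(\L) \leq h^1(\bar\P,\L + K_{\bar\P}) + h^1(D, \O_D(\L)),\]
so I have to kill both of these terms.

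I would first dispose of $h^1(\O_D(\L))$ using that $D$ has arithmetic genus $1$ and $\deg_D \O_D(\L) = -\L\cdot K_{\bar\P}$. Under the hypothesis $\L\cdot K_{\bar\P}\leq -1$, this degree is at least $1$, and Riemann-Roch on $D$ gives the vanishing immediately when $D$ is integral; when $D$ is reducible or non-reduced, the 1-connectedness of anticanonical divisors on smooth rational surfaces together with the nefness of $\L$ (which forces non-negativity of $\L\cdot D_i$ on each component $D_i$) lets me conclude component by component. Under the second hypothesis one is in the strong anticanonical case by Remark 2.5, so $D$ is a smooth elliptic curve; the subcase $\L\cdot K_{\bar\P}\leq -1$ is already handled by Proposition 2.7(iii), and only $\L\cdot K_{\bar\P}=0$ remains.

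For $h^1(\L + K_{\bar\P})$ I would argue by induction, iterating the same sequence with $\L$ replaced by $\L + K_{\bar\P}$. If $\L + K_{\bar\P}$ stays effective and nef with $(\L + K_{\bar\P})\cdot K_{\bar\P}\leq -1$, the inductive hypothesis closes the step. If $\L + K_{\bar\P}$ fails to be effective, then $h^0(\L + K_{\bar\P}) = 0$, so $h^2(-\L)=0$ by Serre duality; combined with $h^0(-\L)=0$, this reduces $h^1(\L + K_{\bar\P}) = h^1(-\L) = -\chi(-\L)$, whose non-positivity is a Riemann-Roch check using $\L^2\geq 0$ and the bound $\L\cdot K_{\bar\P}\leq -1$. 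If instead $\L + K_{\bar\P}$ is effective but not nef, I would extract its fixed $(-1)$- or $(-2)$-components and restart with the nef residual. The remaining boundary case, $\L\cdot K_{\bar\P}=0$ with general points, is handled directly: by the nef-equals-almost-excellent criterion (Proposition 2.7(ii)) together with the shape of the nef cone of the blow-up of $\P^2$ at $k\leq 9$ general points, such an $\L$ must be a non-negative multiple of $-K_{\bar\P}$. For $k\leq 8$, $-K_{\bar\P}$ is ample, forcing $\L=0$ and $h^1(\O)=0$; for $k=9$, write $\L = c(-K_{\bar\P})$, so $K_{\bar\P}^2=0$ gives $\chi(\L)=1$, while the unique effective representative $c\cdot D$ gives $h^0(\L)=1$, hence $h^1(\L)=0$.

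The main obstacle is the inductive step for $H^1(\L+K_{\bar\P})$ outside the strong anticanonical case: $D$ may be reducible or non-reduced, $\L+K_{\bar\P}$ need not remain nef (or even effective), and controlling the components of $D$ on which $\L+K_{\bar\P}$ could have negative intersection is delicate. This is exactly where one has to invoke the structural analysis of anticanonical rational surfaces of Harbourne~\cite{Har2}, which guarantees enough positivity to keep the descent well-behaved.
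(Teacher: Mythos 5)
The paper's own proof is a one-line citation of Theorem III.1 of Harbourne's \emph{Anticanonical rational surfaces} (the reference \cite{Har3}, not \cite{Har2} as at the end of your sketch); your proposal instead attempts to reprove that theorem from scratch, and the attempt has a genuine gap at its only hard step. The problem is the branch of your induction in which $\L+K_{\bar\P}$ fails to be effective. There you assert that $h^1(\L+K_{\bar\P})=h^1(-\L)=-\chi(-\L)$ is non--positive ``using $\L^2\geq 0$ and $\L\cdot K_{\bar\P}\leq -1$''. But Riemann--Roch gives $-\chi(-\L)=-1-\tfrac12\left(\L^2+\L\cdot K_{\bar\P}\right)$, which is non--positive only when $\L^2+\L\cdot K_{\bar\P}\geq -2$, i.e.\ when $p_a(\L)\geq 0$, and the stated hypotheses do not imply this. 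Concretely, on $\F_1$ (the plane blown up at one point) take $\L=\L(2;2)=2(H-E)$: it is effective and nef, $\L\cdot K_{\bar\P}=-4\leq -1$, $\L+K_{\bar\P}=-H-E$ is not effective, and $h^1(\L+K_{\bar\P})=-\chi(-H-E)=1$. Your bound $h^1(\L)\leq h^1(\L+K_{\bar\P})+h^1(\O_D(\L))$ then yields only $h^1(\L)\leq 1$, whereas the proposition (correctly) gives $h^1(\L)=0$. What actually closes such cases is the surjectivity of the connecting map $H^0(\O_D(\L))\to H^1(\L+K_{\bar\P})$, and proving that surjectivity --- rather than the crude additivity bound from the long exact sequence --- is the real content of Harbourne's theorem.

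The peripheral parts of your sketch are sound: the vanishing of $h^1(\O_D(\L))$ via $\omega_D\cong\O_D$, duality, nefness and $1$--connectedness is the standard argument, and your treatment of the boundary case $\L\cdot K_{\bar\P}=0$ with general points (index theorem forcing $\L$ to be a multiple of $-K_{\bar\P}$, then the elementary computation for $k\leq 8$ and $k=9$) is correct and essentially reproduces Remark \ref{rem:leq9} together with Proposition \ref{prop:strongantiK}. But the descent on $h^1(\L+K_{\bar\P})$ is precisely where you concede that ``one has to invoke the structural analysis of anticanonical rational surfaces of Harbourne,'' at which point the argument is no longer an independent proof but a restatement of the citation the paper already makes --- and the one step you do spell out in that descent is false as stated.
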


\begin{proof}
This follows from Theorem III.1 from \cite {Har3}.
\end{proof}

\begin{proposition}\label{prop:leq9b}
Consider an effective system $\L=\L(d;m_1p_1,...,m_kp_k)$ with $k\leq 9$
and suppose we are in the strong anticanonical case. Then
$\L$ is not nef if and only if
there is a smooth rational curve $C$ on $\bar \P$
with $-2 \leq C^2 \leq -1$ and $\L\cdot C < 0$.
If $p_1,...,p_k$ are general points, then only the case $C^ 2 = -1$ is possible.
\end{proposition}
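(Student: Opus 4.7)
The reverse implication is immediate: if such a $C$ exists then $\L\cdot C<0$ witnesses that $\L$ is not nef.

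For the forward implication, I would suppose $\L$ is not nef and pick an irreducible curve $C$ on $\bar\P$ with $\L\cdot C<0$. Let $D\in|-K_{\bar\P}|$ be the reduced irreducible anticanonical curve supplied by the strong anticanonical hypothesis. The one technical observation driving the argument is that any irreducible curve of nonnegative self--intersection is automatically nef (against distinct irreducible curves by intersection theory, against itself by assumption). Since $D^2=K_{\bar\P}^2=9-k\geq 0$, the curve $D$ is nef, so $\L\cdot D\geq 0$ because $\L$ is effective; hence $C\neq D$ and therefore $C\cdot D\geq 0$. Adjunction on $C$ now reads $C^2=2p_a(C)-2+C\cdot D$. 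If $p_a(C)\geq 1$ this would give $C^2\geq 0$, making $C$ nef and forcing $\L\cdot C\geq 0$, a contradiction. So $p_a(C)=0$, i.e.\ $C$ is smooth rational, and $C^2=-2+C\cdot D\geq -2$; the same nefness remark rules out $C^2\geq 0$, leaving $-2\leq C^2\leq -1$.

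For the generality statement I would split on $k$. If $k\leq 8$ and the $p_i$ are general, then $\bar\P$ is a Del Pezzo surface with $-K_{\bar\P}$ ample, so $C\cdot D=-C\cdot K_{\bar\P}>0$, which combined with $C^2=-2+C\cdot D$ yields $C^2\geq -1$. The delicate case is $k=9$: a $(-2)$-curve would satisfy $C\cdot D=0$, so its class would lie in the negative definite $E_8$-lattice $K_{\bar\P}^\perp$ and in fact be a root there. My plan for this step is to use upper semicontinuity of $h^0$ on the position of the $p_i$: each of the countably many root classes gives a closed locus where it is effective, and it suffices to exhibit one configuration of $9$ points on a smooth plane cubic for which no root class is effective. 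This last input, which amounts to choosing the $9$ points in Mordell--Weil general position on the associated anticanonical rational surface, is the genuine obstacle in the proof and is precisely where I would invoke the theory of anticanonical rational surfaces from \cite{Har2}, \cite{Har3}; all the adjunction manipulations above are routine once one has the observation that irreducible curves of nonnegative self--intersection are nef.
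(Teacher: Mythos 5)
Your proof is correct and follows essentially the same route as the paper: produce an irreducible $C$ with $\L\cdot C<0$, observe $C^2<0$ and $C\cdot K_{\bar\P}\leq 0$ via the irreducible anticanonical curve, apply adjunction to get $C$ smooth rational with $C^2\in\{-2,-1\}$, and exclude $(-2)$--curves for general points by appealing to the theory of anticanonical rational surfaces (the paper cites \cite{cmsegre}, Corollary 5.4, and \cite{def} for exactly this last step). Your write-up merely makes explicit a couple of points the paper leaves implicit, namely why $C\neq D$ and why $C\cdot K_{\bar\P}\leq 0$.
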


\begin{proof}
If $\L$ is not nef,
there is some irreducible curve $C$ such that $\L \cdot C < 0$
and  $C^2 < 0$.
On the other hand $C \cdot K_{\bar \P} \leq 0$.
By the adjunction formula,  $C$ is rational
and it is either a $(-1)$--curve or $C^2 = -2$.
However  this latter case cannot occur
if $p_1,...,p_k$ are general points (see \cite {cmsegre}, Corollary 5.4, or \cite {def}).\end{proof}

\begin{corollary}\label {cor:GHHkleq9} The SHGH conjecture holds for $k\leq 9$ 
general points.
\end{corollary}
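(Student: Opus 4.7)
The plan is to prove the equivalent form Conjecture~\ref{conj:GHH2} of the SHGH conjecture: any standard system $\L=\L(d;m_1,\dots,m_k)$ with $m_1\ge\cdots\ge m_k$ and $k\le 9$ general base points is non--special. Since the points are general and $k\le 9$, we are already in the strong anticanonical case, in particular in the anticanonical case required by Proposition~\ref{prop:antiKnef}.

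From standardness, $m_1+m_2+m_3\le d$, so $3m_3\le d$; combining with $m_i\le m_3$ for $i\ge 3$ gives
\[
\sum_{i=1}^k m_i\le (m_1+m_2)+(k-2)m_3\le (d-m_3)+7m_3\le 3d,
\]
i.e. $\L$ is almost excellent, as already observed in Remark~\ref{rem:leq9}. By Proposition~\ref{prop:strongantiK}(i), $\L$ is effective, and by Proposition~\ref{prop:strongantiK}(ii) almost excellence is equivalent to $\L$ being nef. Applying Proposition~\ref{prop:antiKnef} to the effective and nef system $\L$ in the anticanonical case with general base points then yields that $\L$ is non--special, which is what we wanted.

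I do not foresee any serious obstacle: the corollary is essentially a packaging of the propositions just recorded. The only small point worth flagging is that the ``boundary'' subcase $k=9$ with $\L=-rK_{\bar\P}$ (where $D^2=K_{\bar\P}^2=0$, so $\L$ is almost excellent but not excellent) is not covered by Proposition~\ref{prop:strongantiK}(iii) alone; it is precisely this subcase that makes the nef version provided by Proposition~\ref{prop:antiKnef} necessary, and once the nefness of $-K_{\bar\P}$ for nine general points is granted, that proposition swallows the boundary case uniformly with the rest.
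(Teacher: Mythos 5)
Your argument is correct, but it takes a genuinely different route from the paper's. You prove the conjecture in its Cremona--reduced form (Conjecture \ref{conj:GHH2}): every standard system on at most nine general points is almost excellent (your estimate $\sum_i m_i\leq (d-m_3)+7m_3\leq 3d$ is fine), hence effective and nef by parts (i) and (ii) of Proposition \ref{prop:strongantiK}, hence non--special by Proposition \ref{prop:antiKnef}; and you rightly flag that the multiple--anticanonical subcase for $k=9$ is exactly why the nef criterion of Proposition \ref{prop:antiKnef} is needed in place of part (iii) of Proposition \ref{prop:strongantiK}. The paper instead proves the conjecture in its original form \ref{conj:GHH} directly: it takes an arbitrary effective system $\L$, and when $\L$ fails to be nef it invokes Proposition \ref{prop:leq9b} to produce the disjoint $(-1)$--curves $E_i$ with $\L\cdot E_i=-n_i<0$, concluding either that $\L$ is $(-1)$--special (if some $n_i\geq 2$) or that $\L-\sum_i E_i$ is effective and nef with the same dimension and virtual dimension, to which Proposition \ref{prop:antiKnef} applies. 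The trade--off is this: your route disposes of the standard case in two lines, but it charges the entire non--standard case to the asserted equivalence of Conjectures \ref{conj:GHH} and \ref{conj:GHH2}, whose proof (tracking what happens when Cremona reduction drives a multiplicity negative) is precisely the kind of $(-1)$--curve bookkeeping that the paper's proof performs explicitly; the paper's route is self--contained modulo Propositions \ref{prop:antiKnef} and \ref{prop:leq9b}, and it moreover exhibits, for any special effective system, the $(-1)$--curve meeting it in degree at most $-2$ that is responsible --- which is the sharper biconditional the corollary actually asserts.
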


\begin{proof}
Let $\L$ be an effective linear system.
If $\L$ is nef we are done by Proposition \ref {prop:antiKnef}.
If $\L$ is not nef,
then by Proposition \ref {prop:leq9b}
there are disjoint $(-1)$--curves $E_1,\ldots,E_h$
such that $\L\cdot E_i = -n_i < 0$, $i=1,\ldots,h$.
If $n_i\geq 2$ for some $i$, then $\L$ is $(-1)$--special. 
Otherwise the system $\L'=\L-(E_1+\cdots+E_h)$ is nef,
effective and has the same dimension and virtual dimension as $\L$.
We therefore finish by applying Proposition
\ref {prop:antiKnef} to $\L'$.
\end{proof}

\begin{remark}\label{rm:bcf}
{\rm 
Suppose $k\leq 9$ and the linear system $\L=\L(d;m_1,\ldots,m_k)$
is effective and nef.
Then it is fixed component free unless $k=9$
and $\L$ is a multiple of the anticanonical system.
In fact one has $\L\cdot K_{\bar \P}\leq 0$ and $\L^ 2\geq 0$.
If $k\leq 8$, then $\L\cdot K_{\bar \P}\leq -1$ by the index theorem,
and the assertion follows by Theorem III.1 from \cite {Har3}
(note that in case (b) of that reference,
there cannot be a fixed component
because of the generality of the base points).
If $k=9$, then the same argument works if $\L\cdot K_{\bar \P}\leq -1$.
If  $\L\cdot K_{\bar \P}= 0$ then $\L$ is a multiple of the anticanonical system.
}
\end{remark}

We close this section with a useful lemma.

\begin{lemma}\label{lem:us}
Consider an effective system $\L=\L(d;m_1,...,m_k)$ with $k\leq 9$.
If $E$ is a $(-1)$--curve such that $E\cdot \L = -n < 0$,
then $E$ appears in the base locus of $\L$ exactly with multiplicity $n$.
\end{lemma}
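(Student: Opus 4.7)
My plan is to prove the two one-sided inequalities $\operatorname{mult}_E(\L) \ge n$ and $\operatorname{mult}_E(\L) \le n$ separately.

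For $\operatorname{mult}_E(\L) \ge n$, I apply the restriction sequence
\[
0 \to \O_{\bar\P}(\L - (r+1)E) \to \O_{\bar\P}(\L - rE) \to \O_E\bigl((\L - rE)\cdot E\bigr) \to 0
\]
iteratively for $r = 0, 1, \ldots, n-1$. Since $(\L - rE)\cdot E = -(n-r) \le -1$ in this range, the restriction is a line bundle of negative degree on $E \cong \P^1$, with no global sections. Thus $H^0(\L - (r+1)E) = H^0(\L - rE)$, and iterating yields $H^0(\L - nE) = H^0(\L)$. In particular $\L - nE$ is effective, so $E$ appears in every divisor of $|\L|$ with multiplicity at least $n$.

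For $\operatorname{mult}_E(\L) \le n$ I contract $E$: let $\pi\colon \bar\P \to \bar\P'$ be the blow-down of the $(-1)$-curve $E$ and set $p := \pi(E)$. Decomposing $\operatorname{Pic}(\bar\P) = \pi^*\operatorname{Pic}(\bar\P') \oplus \Z E$ via $D = \pi^*\pi_*D - (D\cdot E)E$, we have $\L = \pi^*M + nE$, where $M := \pi_*\L$ is effective on $\bar\P'$. Using $\pi_*\O(jE) = \O_{\bar\P'}$ and $\pi_*\O(-jE) = \mathfrak m_p^{\,j}$ for $j \ge 0$ (and $R^1\pi_*$ vanishing on these sheaves), the projection formula yields
\[
H^0(\L - rE) \;\cong\; H^0\bigl(\bar\P',\, M \otimes \mathfrak m_p^{\max(r-n,\,0)}\bigr).
\]
In particular $H^0(\L - (n+1)E) < H^0(\L - nE) = H^0(M)$ if and only if $p$ is not a base point of $|M|$ on $\bar\P'$.

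The proof therefore reduces to showing that $p$ is not a base point of $M$, which is where the hypothesis $k \le 9$ and the generality of $p_1, \ldots, p_k$ enter. The surface $\bar\P'$ is a smooth rational surface inheriting the anticanonical structure from $\bar\P$, and by the generality of the $p_i$ the point $p$ is itself a general point of $\bar\P'$: if $E = E_i$, then $\bar\P'$ is the blow-up of $\P^2$ at the remaining $k-1$ general points and $p = p_i$, and for any other $(-1)$-curve $E$ the same genericity persists under the birational identification corresponding to the contraction. Since $|M|$ is effective its base locus is a proper closed subscheme of $\bar\P'$: the fixed components are curves, which a generic point avoids, and any isolated base points are controlled by Propositions \ref{prop:antiKnef} and \ref{prop:leq9b} on the surface $\bar\P'$ (which still has at most $9$ blown-up points in general position) and are also avoided by a general $p$.

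The main obstacle I anticipate is this last step: rigorously verifying that $p$, though determined by $E$ and the configuration $(p_1, \ldots, p_k)$, is generic enough on $\bar\P'$ to avoid the base locus of $M$. For $E = E_i$ this is immediate, but for an arbitrary $(-1)$-curve one must carefully track the birational identification and appeal to the generality of the $p_i$ to ensure that no isolated base point of $M$ coincides with $p$.
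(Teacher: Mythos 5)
Your lower bound $\operatorname{mult}_E(\L)\ge n$ via the restriction sequences is correct (and does not even need $k\le 9$). The upper bound, however, has a genuine gap at exactly the step you flag. The reduction to showing that $p=\pi(E)$ is not a base point of $|M|$ is fine, but the justification --- ``$p$ is a general point of $\bar\P'$, and a general point avoids the fixed components and isolated base points of $|M|$'' --- is circular as written: $p$ and $M$ are determined by the \emph{same} configuration $(p_1,\dots,p_k)$, so $p$ cannot be varied while $M$ stays fixed. Concretely, if $|\L-nE|$ had a fixed component $C$ with $C\cdot E>0$ (which is precisely what must be excluded), then $\pi_*C$ would be a fixed component of $|M|$ passing through $p$ for \emph{every} configuration, and no amount of genericity of the $p_i$ rules this out by itself. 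Note also that the one case you call immediate, $E=E_i$, never occurs: since $m_i\ge 0$ one has $\L\cdot E_i=m_i\ge 0$, so every $(-1)$--curve meeting $\L$ negatively is a ``non--obvious'' one (a line through two points, a conic through five, etc.), and the hard case is the only case.

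To close the gap along your lines you would need the nontrivial input that a Cremona transformation carrying $E$ to an exceptional class carries a general configuration of $k\le 9$ points to another general configuration of $k$ points (transitivity of the Weyl group on $(-1)$--classes together with preservation of general position); only then does $p$ become an honest general point independent of the $k-1$ points that determine $M$, so that $\mathrm{Bs}|M|$, a proper closed subset depending only on those $k-1$ points, is avoided. The paper sidesteps all of this: it observes that the $(-1)$--curves $E_1,\dots,E_h$ meeting $\L$ negatively form a pairwise disjoint $(-1)$--configuration, writes $\L=\sum_i n_iE_i+\L'$ with $\L'$ effective, nef and $\L'\cdot E_i=0$, and invokes Remark \ref{rm:bcf} (resting on Harbourne's Theorem III.1) to conclude that $\L'$ has no $(-1)$--curve in its base locus. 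Your appeal to Propositions \ref{prop:antiKnef} and \ref{prop:leq9b} to ``control isolated base points'' is not the relevant statement --- those give non--speciality and nefness, not base--locus information; the statement you actually need is the fixed--component--freeness of Remark \ref{rm:bcf}.
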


\begin{proof}
Let $E_1,...,E_h$ be all $(-1)$-curves such that $E_i\cdot \L = -n_i < 0$.
We may assume $E = E_1$.
Then $E_1,...,E_h$ is a $(-1)$--configuration in the sense of \cite{CM98},
i.e. $E_i \cdot E_j = 0$ if $1 \leq i < j \leq h$.
Moreover $\L = \sum_{i=1}^h n_iE_i + \L'$,
where $\L'=\L(d';m'_1,...,m'_k)$ is effective and not $(-1)$--special;
hence it is non--special, and $\L'\cdot E_i=0, i=1,...,h$. 

Then $\L'$ is nef
and by Remark \ref {rm:bcf} it has no $(-1)$--curve in its base locus.
\end{proof}

\section{The Nagata conjecture} \label{sec:nagata}

The SHGH conjecture implies another famous open
conjecture by Nagata (see \cite{nagata} and  \cite{cmsegre}):

\begin{conjecture}[Nagata] \label{conj:nagata}
The system $\L(d;m_1,...,m_k)$ is empty as soon as $k\geq 10$ and
\[
\sum_{i=1}^ k m_i \geq d\sqrt k.
\]
\end{conjecture}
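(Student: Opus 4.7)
The plan is to prove that the SHGH conjecture (Conjecture \ref{conj:GHH}) implies this Nagata conjecture. I would argue by contradiction: suppose SHGH holds, and that $\L=\L(d;m_1,\ldots,m_k)$ is non--empty for some $k\geq 10$ and some $(d;m_1,\ldots,m_k)$ with $m_1\geq m_2\geq\cdots\geq m_k\geq 0$ and $\sum_{i=1}^k m_i \geq d\sqrt{k}$.

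First, I would reduce to the case of a standard system via iterated Cremona transformations. If the system is not standard, i.e.\ $m_1+m_2+m_3 > d$, setting $t:=m_1+m_2+m_3-d>0$, the Cremona transformation based at $p_1,p_2,p_3$ produces an equivalent system $\L(d-t;m_1-t,m_2-t,m_3-t,m_4,\ldots,m_k)$ of the same dimension on the blown--up surface. Since the degree strictly decreases, after finitely many iterations the system becomes standard. The key observation is that the Nagata quantity $\sum m_i - d\sqrt{k}$ changes by $t(\sqrt{k}-3)\geq 0$ at each step, as $\sqrt{k}\geq\sqrt{10}>3$ for $k\geq 10$. Thus the hypothesis $\sum m'_i \geq d'\sqrt{k}$ persists for the resulting standard, non--empty system $\L'=\L(d';m'_1,\ldots,m'_k)$.

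Next, since $\L'$ is standard, Proposition \ref{prop:standard} ensures it is not $(-1)$--special; by SHGH it is then non--special. Combined with non--emptiness, this yields virtual dimension $\geq 0$, i.e.
\[
(d')^2 + 3d' \;\geq\; \sum_{i=1}^k (m'_i)^2 + \sum_{i=1}^k m'_i.
\]
Cauchy--Schwarz together with the Nagata-type hypothesis gives
\[
\sum_{i=1}^k (m'_i)^2 \;\geq\; \frac{1}{k}\Big(\sum_{i=1}^k m'_i\Big)^{\!2} \;\geq\; \frac{1}{k}\cdot k\,(d')^2 \;=\; (d')^2.
\]
Substituting into the previous inequality we obtain $\sum_i m'_i \leq 3d'$, which directly contradicts $\sum_i m'_i \geq d'\sqrt{k} > 3d'$ when $d'>0$; the case $d'=0$ forces $\L'=\emptyset$ and is handled trivially.

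The principal obstacle in this plan is the Cremona reduction step. One must carefully handle intermediate systems in which the transformed multiplicity $m_3-t$ might become negative (when $m_1+m_2>d$), as well as infinitely near base points that successive Cremona transformations may introduce. These complications are standard in the theory of linear systems on blown--up surfaces; once they are addressed, the remainder of the argument is the clean Cauchy--Schwarz calculation sketched above.
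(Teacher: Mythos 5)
The statement you were asked to prove is labeled a \emph{conjecture} in the paper, and the paper offers no proof of it: Nagata's conjecture for general $k\geq 10$ is an open problem, and the surrounding text only records (citing the authors' earlier work) that the SHGH conjecture \emph{implies} it. Your proposal does not close this gap --- it proves exactly that implication, taking Conjecture \ref{conj:GHH} as a hypothesis. Since SHGH is itself unproven, what you have written is a conditional statement, not a proof of Conjecture \ref{conj:nagata}. That is the essential defect: the ``missing idea'' is an unconditional argument, and none is known (indeed, establishing special cases of this circle of conjectures is the entire point of the paper, whose main theorem handles only $\L(d;m^{10})$ with $d/m\geq 174/55$).

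As a derivation of the implication SHGH $\Rightarrow$ Nagata, your sketch is essentially the standard one and is sound in outline: the Cremona step changes the Nagata quantity $\sum_i m_i - d\sqrt{k}$ by $t(\sqrt{k}-3)\geq 0$ since $\sqrt{k}\geq\sqrt{10}>3$; a standard system is not $(-1)$--special by Proposition \ref{prop:standard}, hence non--special under SHGH; and the Cauchy--Schwarz computation then forces $\sum_i m'_i\leq 3d'<d'\sqrt{k}$, a contradiction. You correctly flag the delicate point, namely that iterated Cremona transformations can produce negative multiplicities and infinitely near points, so the reduction to a genuinely standard system needs care (one must also split off the $(-1)$--cycles $E_i$ when $m_i<0$, checking that the Nagata quantity does not decrease). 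But even with those details supplied, the result would be the implication already asserted in the paper with a reference to \cite{cmsegre}, not a proof of the conjecture itself.
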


Nagata's conjecture holds if $k$ is a perfect square (see \cite{nagata}).
In this case also SHGH conjecture holds 
(see \cite {evain}, \cite {ccnagata}, \cite {roe}).

For homogeneous linear systems, Nagata's conjecture reads as follows.

\begin{conjecture}[Homogeneous Nagata] \label{conj:nagatahom} Assume $k\geq 10$. 
The system $\L=\L(d;m^ k)$ is empty as soon as  $\L^ 2\leq 0$, i.e. as soon as
$$d/m\leq \sqrt k.$$
\end{conjecture}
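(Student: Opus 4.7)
The plan is to verify that Conjecture~\ref{conj:nagatahom} is precisely the homogeneous specialization of Nagata's Conjecture~\ref{conj:nagata}, rephrased in terms of the intersection product on $\bar\P$.

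First, I would compute the self-intersection of $\L = \L(d;m^k)$. From $H^2 = 1$, $E_i^2 = -1$, $H\cdot E_i = 0$ one gets $\L^2 = d^2 - km^2$, so $\L^2 \leq 0$ is equivalent to $d^2 \leq km^2$, i.e.\ to $d/m \leq \sqrt k$; this gives the ``i.e.'' in the statement. Second, in the homogeneous case $\sum_{i=1}^k m_i = km$, so Nagata's bound $\sum_i m_i \geq d\sqrt k$ reads $km \geq d\sqrt k$, which is again $d/m \leq \sqrt k$. Thus Conjecture~\ref{conj:nagatahom} is literally Conjecture~\ref{conj:nagata} in the case of constant multiplicities, and its asserted emptiness follows once Nagata's conjecture is assumed in this range.

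An alternative derivation uses only material already in the excerpt: one can deduce Conjecture~\ref{conj:nagatahom} from the Homogeneous SHGH Conjecture~\ref{conj:Segrehom}. It suffices to check that $\L^2 \leq 0$ and $k \geq 10$ force the virtual dimension $v = \frac{1}{2}(\L^2 + 3d - km)$ to be $\leq -1$. One has $\L^2 \leq 0$, and since $d/m \leq \sqrt k < k/3$ for $k \geq 10$ one also has $3d - km < 0$, both quantities being integers. A short case split on whether $\L^2 = 0$ (possible only for $k$ a perfect square $\geq 16$, where $3d - km = m(3\sqrt k - k) \leq -2$) shows $2v \leq -2$. Hence the expected dimension is $-1$, and Conjecture~\ref{conj:Segrehom} asserts $\dim \L = -1$, i.e.\ $\L$ is empty.

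The hard part is unavoidable: Conjecture~\ref{conj:Segrehom}, and the SHGH conjecture in general, remain open except in the cases noted (namely $k \leq 9$ via Corollary~\ref{cor:GHHkleq9}, and $k$ a perfect square). This is precisely why Conjecture~\ref{conj:nagatahom} is still open, and it is the motivation for the partial result for $k = 10$ established in the main theorem of this paper.
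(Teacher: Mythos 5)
Your proposal is correct and matches the paper's treatment: the statement is an open conjecture, presented in the paper without proof as the homogeneous specialization of Nagata's Conjecture~\ref{conj:nagata}, and your verification that $\L^2\leq 0 \Leftrightarrow d/m\leq\sqrt k \Leftrightarrow km\geq d\sqrt k$ is exactly the content of that reformulation. Your second derivation from Conjecture~\ref{conj:Segrehom} likewise mirrors the paper's own remark that for $\L^2\leq 0$ the SHGH and Nagata conjectures both predict emptiness, and your conclusion that the conjecture remains open is the correct reading of the situation.
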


For homogeneous linear systems with
non--positive self--intersection
Nagata's conjecture and the SHGH conjecture are equivalent: 
 in this case they both predict that the system 
is empty. Nagata's conjecture does not directly make any prediction for
homogeneous linear systems with positive self--intersection as SHGH does. However, as we are going to show next, it can be seen as
an asymptotic version of the SHGH conjecture. 

\begin{proposition}\label{prop:nagseg}  Fix a number $x\geq \sqrt {k}$ and suppose
that  if $\L(\delta;\mu^ k)$ is not empty, then $\delta/\mu> k/x$. 
Then for all pairs $(d,m)$ of positive mumbers 
such that $d/m\geq x $ the linear system $\L=\L(d;m^ k)$ is ample. 
Morever  there is an integer $N(d/m)$ such that for all
$n>N(d/m)$ the linear system $\L(nd;(nm)^ k)$ is non special.
\end{proposition}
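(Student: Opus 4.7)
The plan is to use the Nakai--Moishezon criterion to verify that $\L = dH - m\sum_{i=1}^k E_i$ is ample on $\bar\P$, and then derive the asymptotic non--speciality from Serre vanishing. Nakai--Moishezon requires $\L^2>0$ and $\L\cdot C > 0$ for every irreducible curve $C$. The self--intersection $\L^2 = d^2-km^2$ is strictly positive whenever $d/m>\sqrt k$, while on each exceptional divisor one has $\L\cdot E_i=m>0$. The substance of the argument is therefore the positivity $\L\cdot C>0$ on an irreducible curve of positive horizontal degree.

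For such a curve, write $C\sim \delta H - \sum_{i=1}^k \mu_i E_i$ with $\delta>0$ and $\mu_i\geq 0$. Since $p_1,\dots,p_k$ are general, the symmetric group $S_k$ acts by monodromy on ${\rm Pic}(\bar\P)$ by permuting the $E_i$, and upper semi--continuity of $h^0$ shows that each permuted class $\delta H-\sum_i \mu_{\sigma^{-1}(i)}E_i$ is again effective on a generic fiber. Adding up all $k!$ monodromy translates on one such fiber yields an effective divisor in class
\[
k!\,\delta H - (k-1)!\Bigl(\sum_j\mu_j\Bigr)\sum_i E_i,
\]
so the homogeneous system $\L\bigl(k!\delta;((k-1)!\sum_j\mu_j)^k\bigr)$ is non--empty. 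The hypothesis of the proposition now forces
\[
\frac{k!\delta}{(k-1)!\sum_j\mu_j}=\frac{k\delta}{\sum_j\mu_j}>\frac{k}{x},
\]
i.e.\ $\sum_j\mu_j < x\delta$. Therefore
\[
\L\cdot C = d\delta - m\sum_j\mu_j > d\delta - mx\delta = (d-mx)\delta \geq 0,
\]
using $d/m\geq x$ at the last step. Combined with the two easy cases, Nakai--Moishezon gives that $\L$ is ample.

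For the second statement, once $\L$ is ample Serre vanishing yields an integer $N=N(d/m)$ such that $H^i(\bar\P, n\L)=0$ for every $i>0$ and $n>N$. Since $n\L$ is the line bundle underlying $\L(nd;(nm)^k)$, the vanishing $H^1(\bar\P, n\L)=0$ is exactly non--speciality. The main obstacle is the symmetrization step: it is the genericity of $p_1,\dots,p_k$ and the resulting $S_k$--monodromy that allow all $k!$ permuted classes to be realized simultaneously as effective divisors on a single fiber, producing an effective divisor in a homogeneous system to which the proposition's hypothesis can be applied.
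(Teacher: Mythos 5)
Your ampleness argument is essentially the paper's own: the paper likewise takes an irreducible curve $C$ with $\L\cdot C\leq 0$, passes to the homogeneous effective class obtained by summing the monodromy (Nagata ``uniformization'') orbit of $C$, applies the numerical hypothesis to that class, and concludes via the Nakai--Moishezon criterion; your inequality $\sum_j\mu_j<x\delta$ is exactly the paper's $\L\cdot\L'>0$ unwound, so this half is the same proof in slightly different clothing. (Both you and the paper gloss over the boundary case $d/m=x=\sqrt{k}$ with $k$ a perfect square, where $\L^2=0$ and ampleness fails literally.) Where you genuinely diverge is the asymptotic non--speciality: the paper does not invoke Serre vanishing, but instead notes that $n\L-K_{\bar\P}=\L(nd+3;(nm+1)^k)$ has slope at least $x-\epsilon$ for $n$ large, hence is big and nef, and applies Mumford's vanishing theorem to get $h^1(\bar\P,n\L)=0$. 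Your Serre--vanishing route is shorter and avoids the $\epsilon$-perturbation, but as written it yields a threshold $N$ depending on the pair $(d,m)$ rather than only on the ratio $d/m$ as the statement asserts; this is easily repaired by applying Serre vanishing to the primitive class $\L(p;q^k)$ with $p/q=d/m$ in lowest terms, of which every $\L(nd;(nm)^k)$ is a multiple. The paper's argument, by contrast, needs only that $n\L-K_{\bar\P}$ be big and nef and in fact produces a bound essentially independent even of the ratio, which is what the detour through Mumford vanishing buys.
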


\begin{proof} One has $\L^ 2>0$. This is clear
if $k$ is not a perfect square; if $k$ is a perfect square, it follows from Nagata's 
conjecture, which holds in this case.

Suppose $C$ is an irreducible curve on
$\bar \P$ such that $\L\cdot C\leq 0$. Since $\L^ 2>0$, there is a 
suitable multiple of $\L$ which is effective. This implies that $C^ 2\leq 0$.
Now uniformize $C$ in the sense of Nagata (see  \cite{nagata}, p. 285),
thus getting the homogeneous linear system $\L'=\L(\delta;\mu^ k)$ formed by
the sum of $C$ with its transforms via all
permutations of the base points $p_1,\dots,p_k$
which change $C$ into a different curve. One has $\delta/\mu>k/x$.
By monodromy, one also has $d\delta-km\mu=\L \cdot \L'\leq 0$,
which leads to a contradiction. 

The ampleness assertion follows by the Nakano--Moishezon Theorem.

As a consequence, there is a positive number $\epsilon$ such that all
linear systems of the form
$\L(\delta;\mu^ k)$ with $\delta/\mu\geq x-\epsilon$ are big and nef.  

Now consider all homogeneous linear systems of the form $n\L-K_{\bar \P}=\L(nd+3;(nm+1)^k)$. If $n$ is large enough, we have $(nd+3)/(nm+1)\geq x-\epsilon$.  Thus $n\L-K_{\bar \P}$ is big and nef and therefore $h^ 1(\bar \P, n\L)=0$
by the Mumford vanishing theorem \cite {M}. \end{proof}

\begin{corollary}\label {cor:nagseg}  If $k\geq 10$ and the homogeneous Nagata conjecture holds for $k$ 
general points, then the ray generated by $\L(d;m^ k)$ belongs to the effective cone
if and only if it belongs to the ample cone, in which case
there is an integer $N(d/m)$ such that for all
$n>N(d/m)$ the linear system $\L(nd;(nm)^ k)$ is non--special.

\end{corollary}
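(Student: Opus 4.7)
The plan is to recognize the present corollary as a direct packaging of Proposition~\ref{prop:nagseg} once one notices that the homogeneous Nagata conjecture is exactly the hypothesis of that proposition at the value $x=\sqrt{k}$. Indeed, at $x=\sqrt{k}$ the ratio $k/x$ equals $\sqrt{k}$, so the implication ``if $\L(\delta;\mu^k)$ is not empty then $\delta/\mu > k/x$'' becomes ``every non--empty homogeneous system satisfies $\delta/\mu>\sqrt{k}$'', which is precisely Conjecture~\ref{conj:nagatahom}.

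For the nontrivial direction of the biconditional, suppose the ray generated by $\L(d;m^k)$ lies in the effective cone, so that some positive integer multiple $\L(nd;(nm)^k)$ is non--empty. The homogeneous Nagata conjecture, applied to this multiple, gives $nd/(nm)=d/m>\sqrt{k}$. Proposition~\ref{prop:nagseg} with $x=\sqrt{k}$ now applies, and its first conclusion yields that $\L(d;m^k)$ is ample, placing the ray in the ample cone.

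The opposite implication is immediate: any ample class has a very ample (in particular effective) positive multiple, so its ray lies in the effective cone.

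Finally, the non--speciality statement for $\L(nd;(nm)^k)$ with $n>N(d/m)$ is literally the second conclusion of Proposition~\ref{prop:nagseg} at $x=\sqrt{k}$, and becomes available as soon as one knows $d/m\geq\sqrt{k}$, which has just been established. There is no real obstacle here; the corollary is bookkeeping that ties Proposition~\ref{prop:nagseg} to the hypothesis that homogeneous Nagata holds.
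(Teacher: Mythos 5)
Your proposal is correct and matches the paper's intent exactly: the paper states this corollary without proof as an immediate consequence of Proposition~\ref{prop:nagseg}, and your argument simply spells out that specialization at $x=\sqrt{k}$ (identifying the hypothesis with the homogeneous Nagata conjecture via its contrapositive, and noting that ampleness trivially implies effectivity of the ray).
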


On the other hand, one has the following:

\begin {proposition}\label {amplecone} Assume $k\geq 10$ and
fix a number $x\geq \sqrt {k}$. Suppose
that  for all pairs $(d,m)$ of positive mumbers 
such that $d/m\geq x $, there is an integer $n$ such that
the linear system $\L(nd;(nm)^ k)$ is not empty and non--special.
Then all linear systems $\L(d;m^ k)$
with $d/m\geq x$ are big and nef.
\end{proposition}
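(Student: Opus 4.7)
My plan is to prove bigness and nefness separately.

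Bigness is immediate: $\L^2 = d^2 - km^2 > 0$ along any ray with $d/m > \sqrt k$, and the hypothesis places some positive multiple of $\L$ in the effective cone, so $\L$ is pseudo-effective; on a surface, pseudo-effective plus positive self-intersection gives bigness.

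For nefness I argue by contradiction. Suppose $\L\cdot C = -e < 0$ for some irreducible curve $C = aH - \sum b_iE_i$. Bigness of $\L$ combined with the Hodge index theorem forces $C^2 = -h < 0$. Applying the hypothesis to each pair $(jd, jm)$ with $j$ a positive integer produces arbitrarily large integers $N$ for which $N\L$ is non-empty and non-special (so in particular $h^1(N\L) = h^2(N\L) = 0$). Fix such an $N\geq 2$, set $n := Ne$, $\mu := \lceil n/h\rceil$, and $B := N\L - \mu C$; then $C$ is a fixed component of $N\L$ of multiplicity at least $\mu$, so $B$ is effective and $H^0(B) = H^0(N\L)$.

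The cohomological core of the argument is the long exact sequence of
\[
0 \to \mathcal O(B) \to \mathcal O(N\L) \to \mathcal O_{\mu C}(N\L) \to 0.
\]
The standard filtration of $\mathcal O_{\mu C}$ restricts step by step to line bundles on $C$ of negative degrees $-n + (j-1)h$ for $j=1,\dots,\mu$, which gives $h^0(\mathcal O_{\mu C}(N\L)) = 0$ and, via an Euler-characteristic computation, $h^1(\mathcal O_{\mu C}(N\L)) \geq 1$. Combined with $h^1(N\L) = h^2(N\L) = 0$, the sequence identifies $h^2(B)$ with $h^1(\mathcal O_{\mu C}(N\L))$, forcing $h^2(B) \geq 1$.

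The main obstacle is now to derive the contradiction $h^2(B) = h^0(K_{\bar\P} - B) = 0$. Effective classes on $\bar\P$ have non-negative $H$-degree, so this reduces to checking $\mu a < Nd + 3$, which, using $\mu \leq Ne/h + 1$, is asymptotic in $N$ to the estimate $ea < dh$, equivalently $d\sum b_i^2 > am\sum b_i$. Here the hypothesis $d/m \geq \sqrt k$ enters decisively: combining it with $\L\cdot C < 0$ (which gives $\sum b_i > ad/m$) and the Cauchy--Schwarz inequality $k\sum b_i^2 \geq (\sum b_i)^2$ yields
\[
\frac{d\sum b_i^2}{am\sum b_i} \;\geq\; \frac{d\sum b_i}{kam} \;>\; \frac{d^2}{km^2} \;\geq\; 1,
\]
that is $ea < dh$. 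Choosing $N$ large enough therefore forces $\mu a < Nd + 3$, hence $h^2(B) = 0$, yielding the desired contradiction and the nefness of $\L$.
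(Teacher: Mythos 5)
Your proof is correct, but it follows a genuinely different route from the paper's. The paper disposes of nefness in a few lines: after replacing $\L$ by an effective non--special multiple, it restricts \emph{once} to a hypothetical curve $C$ with $\L\cdot C<0$ via $0\to\L(-C)\to\L\to\L_{\vert C}\to0$, obtains $h^1(C,\L_{\vert C})=0$, hence $\deg(\L_{\vert C})\geq p_a(C)-1$, which forces $C$ to be a $(-1)$--curve (quoting \cite{def}); the contradiction then comes from the non--existence of homogeneous $(-1)$--configurations on blow--ups of $k\geq 10$ general points (quoting \cite{CM00}). You instead subtract the full multiple $\mu C$ from $N\L$, read off $h^2(N\L-\mu C)\geq 1$ from the filtration of $\O_{\mu C}$, and kill it by an $H$--degree count in which the hypothesis $d/m\geq\sqrt k$ enters through Cauchy--Schwarz (namely $am\sum b_i<d\sum b_i^2$). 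Your version is self--contained --- it needs neither the classification of negative curves on very general blow--ups nor the $(-1)$--configuration result, only the numerical hypothesis --- at the cost of being longer and of not yielding the refinement recorded in the paper's subsequent Remark (that non--ampleness can only be caused by a curve with $p_a(C)=1$ and $\L\cdot C=0$), which drops out of the paper's argument for free. Two small points to tighten: the deduction $C^2<0$ comes from writing $N\L=\alpha C+D$ with $D$ effective not containing $C$ (so $\alpha C^2\leq N\L\cdot C<0$), not from the Hodge index theorem by itself; and before dividing by $am\sum b_i$ one should observe that $a=C\cdot H>0$ (otherwise $C=E_i$ and $\L\cdot C=m>0$) and hence $\sum b_i>ad/m>0$. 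Neither affects the validity of the argument.
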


\begin{proof} 
Set $\L=\L(d;m^ k)$. Replace $\L$ with a multiple so as to get an effective,
non--special linear system.
Assume $C$ is an irreducible curve such that $\L\cdot C< 0$.
Then $\L(-C)$ is effective and the exact sequence
$$0\to \L(-C)\to \L\to \L_{\vert C} \to 0$$
implies that $0=h^ 1(\bar \P,\L)=h^ 1(C, \L_{\vert C})$. This yields
$\deg( \L_{\vert C})\geq p_a(C)-1$, which forces $C$ to be 
a $(-1)$--curve (see \cite{def}). But since there is no
homogeneous $(-1)$--configuration when one blows up $k\geq 10$
points in the plane (see \cite {CM00}), we find a contradiction.
\end{proof}

\begin{remark} {\rm The above proof shows that the only
way a system $\L=\L(d;m^ k)$ can be not ample, is 
because of the existence of a curve $C$ with $p_a(C)=1$
and $\L\cdot C=0$.}
\end{remark}

As a consequence we have:

\begin{corollary}\label{cor:segrenagata} In the same hypotheses
as in Proposition \ref {amplecone}, if $\L(\delta;\mu^ k)$ is not empty, then
$\delta/\mu\geq k/x$. 

 In particular, if for all pairs $(d,m)$ of positive mumbers 
such that $d/m\geq \sqrt k $, there is an integer $n$  such that
 the linear system $\L(nd;(nm)^ k)$ is not empty and non--special, then
the homogeneous Nagata conjecture holds.
\end{corollary}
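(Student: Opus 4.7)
The plan is to combine Proposition \ref{amplecone} with an elementary intersection calculation, then take a limit in the ratio $d/m$.

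Assume $\L(\delta;\mu^k)$ is non-empty, and fix an effective divisor $D$ in this system on $\bar\P$. For any pair of positive integers $(d,m)$ with $d/m\geq x$, Proposition \ref{amplecone} (whose hypotheses are precisely what we are assuming) produces the class $\L(d;m^k)=dH-m\sum_{i=1}^k E_i$ as big and nef. Intersecting this nef class with the effective divisor $D$ and using $H^2=1$, $H\cdot E_i=0$, $E_i\cdot E_j=-\delta_{ij}$ gives
\[
0\leq \L(d;m^k)\cdot D \;=\; d\delta - km\mu,
\]
that is, $\delta/\mu\geq k/(d/m)$.

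Because the set of ratios $d/m$ obtained from pairs of positive integers is dense in $[x,\infty)$, I can choose such integer pairs with $d/m\to x^+$; passing to the limit in the displayed inequality yields $\delta/\mu\geq k/x$, which is the first assertion. For the second statement, specialize to $x=\sqrt{k}$: the hypothesis becomes exactly the assumption of the ``in particular'' clause, and the bound reads $\delta/\mu\geq \sqrt{k}$. Contrapositively, $\L(\delta;\mu^k)$ is empty whenever $\delta/\mu<\sqrt{k}$, which is Conjecture \ref{conj:nagatahom} except possibly at the equality $\delta/\mu=\sqrt{k}$; but this equality can only occur when $k$ is a perfect square, and in that regime the homogeneous Nagata conjecture is already known (see the references cited just after Conjecture \ref{conj:nagata}).

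Since all of the substantive geometric work is hidden inside Proposition \ref{amplecone} and the density of rationals in $[x,\infty)$ is immediate, there is no real obstacle to overcome. The only two small points to notice are the limit argument turning $k/(d/m)$ into $k/x$ in the first part, and handling the boundary ratio $\delta/\mu=\sqrt{k}$ (via the perfect-square case of Nagata) in the second.
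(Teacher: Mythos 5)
Your proof is correct and follows essentially the same route as the paper: the paper argues by contradiction, choosing a rational ratio $d/m$ strictly between $x$ and $k\mu/\delta$ and deriving $\L(d;m^k)\cdot\L(\delta;\mu^k)<0$ against nefness, which is just the contrapositive of your limiting argument. Your explicit treatment of the boundary ratio $\delta/\mu=\sqrt{k}$ in the ``in particular'' clause (impossible for irrational $\sqrt{k}$, and covered by Nagata's known perfect-square case otherwise) is a welcome extra care that the paper leaves implicit.
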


\begin{proof} Set $\L'=\L(\delta;\mu^ k)$.
Assume $\delta/\mu< k/x$. Then we can choose positive  integers
$d,m$ such that $x<d/m<k\mu/\delta$. Then the linear system $\L(d;m^ k)$ is 
big and nef, but $\L\cdot \L'<0$, a contradiction.  
\end{proof}

\section{The first degeneration}
\label{sec:firstdeg}

From now on, we will consider homogeneous linear systems
$\L(d;m^{10})$ with ten general base points.
In order to show that the linear system $\L(d;m^{10})$
has the expected dimension,
we will make an appropriate degeneration,
both of the plane (blown up at the ten general points)
and of the line bundle.
The full analysis will require several different degenerations,
depending on $d$, $m$, and their ratio $d/m$;
the first degeneration we will present has been described in \cite{CM98}
and \cite{CM00}.

We first consider the trival family $\Delta\times \P^ 2\to \Delta$
over a disc $\Delta$ and blow up a point in the central fiber. 
We thus get a flat and proper family $\Y\to \Delta$ over $\Delta$,
where the general fibre $Y_t$ for $t\neq 0$ is a $\P^2$,
and the central fibre $Y_0$ is reducible surface
$\P\cup\F$, where $\P\cong \P^ 2$ is a projective plane,
$\F\cong \F_1$ is a plane blown up at a point,
and $\P$ and $\F$ meet transversally along a smooth rational curve $E$
which is the exceptional divisor on $\F$ and a line on $\P$  (see Figure \ref {fig:1}).

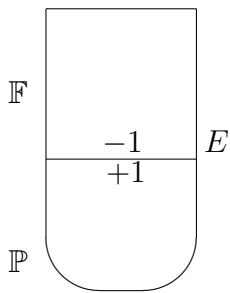
\begin{figure}[ht]
\setlength{\unitlength}{0.5mm}
\begin{center}

\begin{picture}(20,64)(15,15)

\put(0,40){\line(0,1){40}}
\put(0,40){\line(1,0){40}}
\put(40,40){\line(0,1){40}}
\put(0,80){\line(1,0){40}}
\put(20,40){\oval(40,70)[b]}
\put(-10,55){$\F$}
\put(-10,10){$\P$}
\put(16,34){$+1$}
\put(15,42){$-1\;\;\;\;\;\;\;E$}

\end{picture}
\end{center}
\caption{the degeneration of the plane}\label{fig:1}
\end{figure}

We now choose four general points on $\P$ and six general points on $\F$.
Consider these ten points as limits of ten general points in the general fibre $Y_t$
and simultaneously blow these points up in the family $\Y$.
This creates ten surfaces $R_i$, ruled over $\Delta$,
whose intersection with each fiber is a $(-1)$-curve,
the exceptional curve for the blow--up of that point in the family.
We denote by $\X_1\to \Delta$ this new family.
The general fibre $X_{1,t}$ for $t\neq 0$
is a plane blown up at ten general points.
The central fibre $X_{1,0}$, shown in Figure \ref {fig:2}, is the union $V_1 \cup Z_1$ where:

\begin{itemize}
\item $V_1$ is a plane blown up at four general points;
\item $Z_1$ is a plane blown up at seven general points;
\item $V_1$ and $Z_1$ meet transversally
along a smooth rational curve $E$
which is a $(-1)$-curve on $Z_1$, whereas $E^2=1$ on $V_1$: it is the pull--back of a line.
\end{itemize}

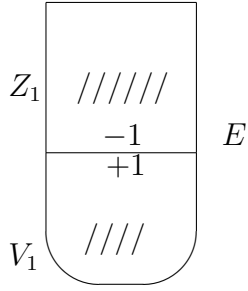
\begin{figure}[ht]
\setlength{\unitlength}{0.5mm}
\begin{center}
\begin{picture}(20,64)(15,15)

\put(0,40){\line(0,1){40}}
\put(0,40){\line(1,0){40}}
\put(40,40){\line(0,1){40}}
\put(0,80){\line(1,0){40}}
\put(20,40){\oval(40,70)[b]}
\put(-10,55){$Z_1$}
\put(-10,10){$V_1$}
\put(16,34){$+1$}
\put(15,42){$-1\;\;\;\;\;\;\;\;\;E$}

\put(10,15){$/ / / /$}
\put(8,55){$/ / / / / /$}

\end{picture}
\end{center}
\caption{the degeneration of the blown--up plane}\label{fig:2}
\end{figure}

Consider the line bundle $\L_0=\pi^*(\O_{\P^ 2} (d)) \otimes \O_{\X_1}(-\sum_i m R_i)$, 
where $\pi:  \X_1\to \P^ 2$ is the natural map. 
This restricts to $\L(d;m^{10})$ on the general fibre,
whereas on the central fibre it is
$\L(0;m^{4})$ on $V_1$ and $\L(d;0,m^{6})$ on $Z_1$. In this notation and in the 
following, the first multiplicity
for bundles on $Z_1$ refers to the point corresponding to $E$.

We will further twist this bundle by a suitable multiple of $Z_1$.
Namely,
we choose a parameter $a$ (to be determined later),
and define
\[
\L_1:= \L_0 \otimes \O_{\X}( (2m+a) Z_1).
\]
We will denote by $\L_{V_1}$ and $\L_{Z_1}$
the restrictions of $\L_1$ to $V_1$ and $Z_1$;
these bundles have the form
\[
\L_{V_1} = \L(2m+a; m^4), \quad \L_{Z_1} = \L(d; 2m+a, m^6).
\]

Using this degeneration we have
(compare with \cite{Har3}, Lemma II.7):

\begin{proposition}\label{prop:ratio>=10/3}
If $d/m \geq 10/3$ then $\L(d;m^{10})$ has the expected dimension.
\end{proposition}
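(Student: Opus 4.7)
The plan is to use the degeneration just constructed, choosing the twisting parameter $a$ so that the restrictions of $\L_1$ to both components of the central fibre are systems with at most nine general base points, and then invoke the $k\le 9$ case of the SHGH conjecture (Corollary \ref{cor:GHHkleq9}). Since the general fibre of $\X_1\to\Delta$ is a blown--up plane carrying the system $\L(d;m^{10})$, upper semicontinuity of $h^0$ in flat families yields
$$h^0(\L(d;m^{10}))\ \le\ h^0(X_{1,0},\L_1),$$
so it is enough to bound the right--hand side above by the expected $h^0$ of $\L(d;m^{10})$; the reverse inequality then forces equality, which is non--speciality.

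To bound $h^0(X_{1,0},\L_1)$, I will project the sections of $\L_1$ on the reducible central fibre onto $V_1$. Since the fibre is $V_1\cup_E Z_1$, a section of $\L_1|_{X_{1,0}}$ is a pair $(s_V,s_Z)$ matching along $E$; the kernel of the projection to $H^0(V_1,\L_{V_1})$ consists of pairs $(0,s_Z)$ with $s_Z|_E=0$, that is, elements of $H^0(Z_1,\L_{Z_1}(-E))$. This gives the unconditional bound
$$h^0(X_{1,0},\L_1)\ \le\ h^0(V_1,\L_{V_1})+h^0(Z_1,\L_{Z_1}(-E)).$$
Now I would take $a=0$, so that $\L_{V_1}=\L(2m;m^4)$ and $\L_{Z_1}(-E)=\L(d;2m+1,m^6)$. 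Both are systems on blown--up planes with at most nine general points, so by Corollary \ref{cor:GHHkleq9} non--speciality is equivalent to the absence of $(-1)$--curves $C$ with $\L\cdot C\le -2$. The only relevant $(-1)$--classes on the $4$--point blow--up are $E_i$ and $H-E_i-E_j$, giving intersections $m$ and $0$; no difficulty there. On the $7$--point blow--up the tightest condition comes from the cubic class $3H-2E_1-E_2-\cdots-E_7$, whose intersection with $\L_{Z_1}(-E)$ equals $3d-10m-2$; this is $\ge-1$ exactly when $3d\ge 10m+1$. The remaining $(-1)$--classes on the Del Pezzo of degree $2$ yield strictly weaker constraints that follow easily from $d/m\ge 10/3$. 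Granting non--speciality, a direct calculation gives
$$h^0(V_1,\L_{V_1})+h^0(Z_1,\L_{Z_1}(-E))\ =\ (m+1)+\Bigl(\tfrac{d(d+3)}{2}-5m^2-6m\Bigr)\ =\ \tfrac{d(d+3)}{2}-5m(m+1)+1,$$
exactly the expected $h^0$ of $\L(d;m^{10})$; chaining the inequalities completes the argument.

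The main obstacle is the boundary case $3d=10m$ (possible only when $3\mid m$), where the cubic class above meets $\L_{Z_1}(-E)$ in $-2$, making $\L_{Z_1}(-E)$ a $(-1)$--special system and breaking the matching of dimensions. To handle this, I would invoke Lemma \ref{lem:us}: the cubic $C$ lies in the base locus of $\L_{Z_1}(-E)$ with multiplicity exactly $2$, so subtracting $2C$ yields the residual system $\L(d-6;2m-3,(m-2)^6)$ of equal $h^0$. One then reruns the $(-1)$--curve check on the residual (whose multiplicities have dropped appreciably), or alternatively adjusts the twist to a different value of $a$ at this exceptional ratio, to recover a non--special configuration that still matches the expected $h^0$ after the computation above.
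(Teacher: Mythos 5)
Your argument for the strict-inequality case $d/m>10/3$ is correct and is a genuine (if minor) variant of the paper's proof: instead of computing $h^0(X_{1,0},\L_1)$ exactly via the transversality of the two restricted series on $E$ (Proposition 3.3(b) of [CM98], which is what the paper invokes), you use the one-sided bound $h^0(X_{1,0},\L_1)\le h^0(V_1,\L_{V_1})+h^0(Z_1,\L_{Z_1}(-E))$ coming from projecting onto the $V_1$-factor. Since for $a=0$ one has $\bigl(v(\L(2m;m^4))+1\bigr)+\bigl(v(\L(d;2m+1,m^6))+1\bigr)=v(\L(d;m^{10}))+1$, and both summands are non-special (and have non-negative virtual dimension) when $3d\ge 10m+1$, the bound equals the expected $h^0$ and semicontinuity closes the argument with no transversality input. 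That is a perfectly good trade: a cruder inequality in exchange for not having to verify surjectivity of the difference map.

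The gap is at the boundary $d/m=10/3$ (i.e. $(d,m)=(10k,3k)$), which the proposition must cover since the next degeneration in the paper only handles $d/m<10/3$ strictly. There your bound fails by exactly one and neither proposed repair closes it. Indeed, when $3d=10m$ the cubic splits twice from $\L(d;2m+1,m^6)$, so by Lemma \ref{lem:us} this system equals $2C$ plus a non-special residual whose virtual dimension is $v(\L(d;2m+1,m^6))+1$; hence $h^0(Z_1,\L_{Z_1}(-E))=v+2$, and rerunning the $(-1)$-curve check on the residual only \emph{confirms} that your upper bound is (expected $h^0$)$+1$, it does not lower it. Changing $a$ does not help either: the identity above holds for every $a$, and one needs \emph{both} $\L(2m+a;m^4)$ and $\L(d;2m+a+1,m^6)$ to be non-special with $h^0=v+1$; the cubic forces $a\le -1$, but already for $a=-1$ the system $\L(2m-1;m^4)$ is empty with $v=-(m+1)<-1$, so $h^0=0\ne v+1$ and the sum overshoots by $m$. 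This is precisely where the paper's transversality argument earns its keep: it shows $r_{V_1}=m$ and $r_{Z_1}=2m-1$, so the two restricted series inside the complete $g^{2m}_{2m}$ on $E\cong\P^1$ meet, for general choices, in dimension $m-1$ rather than $m$, recovering the lost $1$. To complete your proof you would need either this transversality statement or some genuinely different device for the ratio $10/3$.
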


\begin{proof}
We set $a=0$ for this analysis.
We will use  semi-continuity
and the transversality of the restricted linear systems
to the double curve $E$ (see \cite{CM98}, Proposition 3.3, (b)).
Note that these restricted systems both have degree $2m$.

First we notice that $\L_{V_1}$ and $\L_{Z_1}$
are both not empty and non--special.

As for $\L_{V_1} = \L(2m; m^4)$, this is 
the dimension $m$ linear system whose
elements are $m$ conics through the four points.  The dimension $r_{V_1}$
of the restriction of this system to $E$ is also $m$.

As for $\L_{Z_1} = \L(d; 2m, m^6)$, we apply Proposition \ref {prop:leq9b}
to prove  that it is nef.  The linear system $\vert -K_{Z_1}\vert$,
corresponding to $\L(3;1^ 7)$,
is base point free of dimension 2
and all $(-1)$--curves are contained in a curve of $\vert -K_{Z_1}\vert$. 
The crucial computation to make is
to intersect the bundle with the $(-1)$--curve which is the
proper transform of the unique cubic curve $C$ in $\L(3;2,1^ 6)$;
this intersection number is $3d-10m\geq 0$. By Proposition
\ref {prop:antiKnef} the system is non--special, and it is non--empty,
since the virtual dimension is positive
(which is also implied by the inequality $d/m \geq 10/3$).

Let  $r_{Z_1}$ be the dimension of the restriction of this system to $E$.
One has
$$r_{Z_1}=\dim( \L(d; 2m, m^6))-\dim(\L(d; 2m+1, m^6))-1$$
since $\L(d; 2m+1, m^6)$  is the \emph {kernel} subsystem of elements 
of $\L(d; 2m, m^6))$ that restrict to zero on $E$.
If $d/m > 10/3$, then, similar considerations as above imply that 
$\L(d; 2m+1, m^6)$ is not empty and non--special.
Therefore $r_{Z_1}$ can be computed as $2m$,
i.e. the restricted system is complete.
If $d/m =10/3$,
the cubic curve $C$ splits off twice from the subsystem $\L(d; 2m+1, m^6)$,
but arguments as above show that the residual system is non--special.
Hence $\L(d; 2m+1, m^6)$ has speciality exactly one, and $r_{Z_1}=2m-1$.
  
Since in either case $r_{V_1}+r_{Z_1}\geq 2m-1$,
we can apply Proposition 3.3, (b) from \cite{CM98} to conclude.
\end{proof} 

\section{Throwing (-1)-curves}\label{sec:throw}

The reason why a relatively simple degeneration
such as the one presented above
will not suffice to prove the general statement
is that the line bundles on the individual surfaces may become special.
By the SHGH Conjecture,
this should be a consequence of having $(-1)$-curves on those surfaces
intersecting the bundle negatively.

Our technique to handle this situation will be to blow up the
offending $(-1)$-curve, and twist by an appropriate multiple
of the exceptional ruled surface. We hope to arrive at the situation
where the ruled surface is a $\P^1 \times \P^1$, and can be blown
down via the other ruling, contracting the original $(-1)$-curve.
This process will create exceptional curves on the surfaces that
the $(-1)$-curve meets.
We refer to this technique in general as \emph{throwing} the $(-1)$-curve.
We explain this idea in two cases which will be relevant for
our purposes.

\subsection{\bf A 1-Throw.}
To be specific,
suppose that $E$ is a $(-1)$-curve
on one of the components $V$ of a local normal crossings semistable degeneration
and that the restriction $\L_V$ to $V$ of the line bundle $\L$ on the threefold
has the property that $\L_V\cdot E = -k < 0$.
Suppose further that there is a double curve $R$ where $V$ meets another component $Z$,
and $E$ meets transversally that curve $R$ at a single point $p$
and meets no other double curve.

Blow up the curve $E$, obtaining the ruled surface $S$; 
by the Triple Point Formula (cf.\ \cite {persson}, \cite{Frie})
$S$ will be isomorphic to $\P^1\times\P^1$,
and will meet $V$ along $E$,
which is one of the members of one of the rulings on $S$.
This blow--up will effect a blow--up of the other surface $Z$ at the point $p$,
creating a new $(-1)$-curve $E'$ on the blow--up $Z'$;
the surface $S$ also meets $Z'$ along $E'$,
which is a member of the other ruling on $S$ (see the left hand side of
Figure \ref {fig:3}).

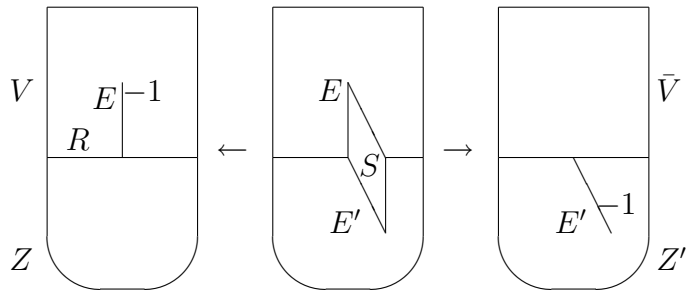
\begin{figure}[ht]
\setlength{\unitlength}{0.5mm}
\begin{center}
\begin{picture}(120,64)(20,20)
\put(0,40){\line(0,1){40}}
\put(0,40){\line(1,0){40}}
\put(40,40){\line(0,1){40}}
\put(0,80){\line(1,0){40}}
\put(20,40){\oval(40,70)[b]}
\put(20,40){\line(0,1){20}}
\put(-10,55){$V$}
\put(-10,10){$Z$}

\put(5,42){$R$}
\put(12,53){$E$}
\put(20,55){$-1$}

\put(45,40){$\leftarrow$}

\put(60,40){\line(0,1){40}}
\put(60,40){\line(1,0){20}}
\put(90,40){\line(1,0){10}}
\put(100,40){\line(0,1){40}}
\put(60,80){\line(1,0){40}}
\put(80,40){\oval(40,70)[b]}
\put(80,40){\line(0,1){20}}

\put(72,55){$E$}
\put(80,40){\line(1,-2){10}}
\put(80,60){\line(1,-2){10}}
\put(90,20){\line(0,1){20}}
\put(83,35){$S$}
\put(75,20){$E'$}

\put(105,40){$\rightarrow$}

\put(120,40){\line(0,1){40}}
\put(120,40){\line(1,0){40}}
\put(160,40){\line(0,1){40}}
\put(120,80){\line(1,0){40}}
\put(140,40){\oval(40,70)[b]}

\put(140,40){\line(1,-2){10}}
\put(135,20){$E'$}
\put(146,25){$-1$}
\put(162,55){$\bar{V}$}
\put(162,10){$Z'$}
\end{picture}
\end{center}
\caption{throwing a $(-1)$--curve}\label{fig:3}

\end{figure}

Note that the normal bundle of $S$ in the threefold has bidegree $(-1,-1)$,
and that the pullback of the bundle $\L$, restricted to $S$, has bidegree $(-k,0)$.

Replace the line bundle $\L$ by $\L' = \L \otimes \O(-kS)$.
The restrictions of $\L'$ to the various components are as follows:
$\L'|_V = \L_V(-kE)$;
$\L'|_{Z'} = \pi^*(\L_Z)(-kE')$;
$\L'|_S$ has bidegree $(0,k)$. Thus the new bundle on the surface $V$ does not meet $E$ anymore.

With this we see that we may blow $S$ down to $E'$ via the other ruling,
obtaining an alternate degeneration;
this will blow down the original $(-1)$-curve $E$ as desired,
and retain the blow--up $Z'$ of the surface $Z$.
The surface $V$ is blown down to $\bar{V}$,
and the bundle on $\bar{V}$ is simply the bundle on $V$, with $E$ removed.
On $Z'$ the bundle is the pullback of the original bundle on $Z$,
twisted by $-k$ times the exceptional divisor $E'$.
This corresponds to adding a point of multiplicity $k$ to the system on $Z$
(see the right hand side of Figure \ref{fig:3}).

This operation will be referred to as a \emph{$1$-throw} (of $E$ on $V$).

\subsection{\bf A 2-Throw.} 
Let us now consider the case when the $(-1)$-curve $E$
meets  transversally the double curve locus $R$ in two points $p_1$ and $p_2$.
We still assume that $E$ lies on the component $V$
and that the restricted system $\L_V$ has the property that $\L_V \cdot E = -k < 0$.
Again blow up $E$, obtaining the ruled surface $T$, which
is isomorphic to  $ \F_1$ by the Triple Point Formula;
$T$ meets $V$ along $E$, and this is also the $(-1)$-curve
which is the negative section of $T$.
The blow--up will create a blow--up $Z'$ of the surface (or surfaces) $Z$
that meet $V$ along $R$, at the two points $p_1$ and $p_2$, 
with two exceptional divisors $G_1$ and $G_2$ on $Z'$.
These $G_i$ are fibers of the ruling of $T$. This is shown on the left
side of Figure \ref {fig:4}. 

Now blow up $E$ again, creating the ruled surface $S$.
This time $S \cong \P^1\times\P^1$; $S$ meets $V$ along $E$,
and it meets $T$ along the negative section. 
The blow--up effects a further two blow--ups on $Z'$, creating the
surface $Z''$, and two more exceptional divisors $F_1$ and $F_2$
respectively, which are $(-1)$--curves on $Z''$. 
By abusing notation we denote by $G_1,G_2$ their proper transforms
on $Z''$; these are now $(-2)$--curves.
The surface $S$ now occurs with multiplicity two in the
central fiber of the degeneration,
since it was obtained by blowing up a double curve,
and its normal bundle in the total space 
 of the degeneration has bidegree $(-1,-1)$.
 All this is shown in the central part of Figure \ref {fig:4}. 

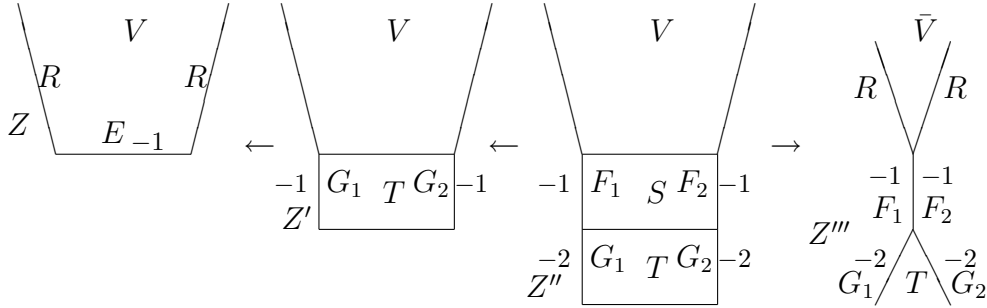
\begin{figure}[ht]

\setlength{\unitlength}{0.5mm}
\begin{center}
\begin{picture}(200,74)(20,10)
\put(0,40){\line(-1,4){10}}

\put(0,40){\line(1,0){36}}

\put(36,40){\line(1,4){10}}
\put(18,70){$V$}
\put(-13,45){$Z$}
\put(-5,58){$R$}
\put(34,58){$R$}
\put(12,43){$E$}
\put(20,41){\mbox{\footnotesize $-1$}}

\put(50,40){$\leftarrow$}

\put(70,40){\line(-1,4){10}}
\put(70,40){\line(1,0){36}}
\put(106,40){\line(1,4){10}}
\put(88,70){$V$}
\put(70,20){\line(0,1){20}}
\put(70,20){\line(1,0){36}}
\put(106,20){\line(0,1){20}}
\put(60,20){$Z'$}
\put(87,27){$T$}
\put(72,30){$G_1$}
\put(58,30){\mbox{\footnotesize $-1$}}
\put(95,30){$G_2$}
\put(106,30){\mbox{\footnotesize $-1$}}

\put(115,40){$\leftarrow$}

\put(140,40){\line(-1,4){10}}
\put(140,40){\line(1,0){36}}
\put(176,40){\line(1,4){10}}
\put(158,70){$V$}
\put(140,20){\line(0,1){20}}
\put(140,20){\line(1,0){36}}
\put(176,20){\line(0,1){20}}
\put(140,0){\line(0,1){20}}
\put(140,0){\line(1,0){36}}
\put(176,0){\line(0,1){20}}
\put(157,27){$S$}
\put(125,2){$Z''$}
\put(142,30){$F_1$}
\put(128,30){\mbox{\footnotesize $-1$}}
\put(165,30){$F_2$}
\put(176,30){\mbox{\footnotesize $-1$}}
\put(157,7){$T$}
\put(142,10){$G_1$}
\put(128,10){\mbox{\footnotesize $-2$}}
\put(165,10){$G_2$}
\put(176,10){\mbox{\footnotesize $-2$}}

\put(190,40){$\rightarrow$}

\put(228,40){\line(-1,3){10}}
\put(228,40){\line(1,3){10}}
\put(228,70){$\bar V$}
\put(228,20){\line(0,1){20}}
\put(228,20){\line(-1,-2){10}}
\put(228,20){\line(1,-2){10}}
\put(212,55){$R$}
\put(236,55){$R$}
\put(200,16){$Z'''$}
\put(217,23){$F_1$}
\put(230,23){$F_2$}
\put(208,3){$G_1$}
\put(238,3){$G_2$}
\put(226,3){$T$}
\put(216,32){\mbox{\footnotesize $-1$}}
\put(230,32){\mbox{\footnotesize $-1$}}
\put(212,10){\mbox{\footnotesize $-2$}}
\put(236,10){\mbox{\footnotesize $-2$}}

\end{picture}
\end{center}

\caption{throwing a $(-2)$--curve}\label{fig:4}

\end{figure}

Write $k = 2\ell-\epsilon$, with $\epsilon \in \{0,1\}$,
and replace the line bundle $\L$ by
$\L' = \L \otimes \O(-k S)\otimes \O(-\ell T)$.
The restrictions of $\L'$
to the various components are as follows:
$\L'|_V = \L_V(-kE)$;
$\L'|_{Z''} = \pi^*(\L_Z)(-k(F_1+F_2)-(\ell)(G_1+G_2))$;
$\L'|_T = \epsilon H$, where $H$ is the line class on $T \cong \F_1$;
$\L'|_S$ has bidegree $(0,\ell-\epsilon)$.

As in the case of the $1$-throw, we may now blow $S$ down the other way.
This contracts $E$ on the surface $V$, thus creating a new surface $\bar V$,
and contracts the negative section of $T$, so that $T$ becomes a $\P^2$.
The image $Z'''$ of the surface $Z''$
has the two curves $F_1$ and $F_2$ identified (see the rightmost side
of Figure \ref{fig:4}, where, by abusing notation, we still denote by $T$
its image after the contraction of $S$).

The bundle on the new plane created by $T$ has degree $\epsilon$.
The bundle on $Z'''$ can be interpreted in the geometry of $Z$
where two new \emph{compound multiple points}
have been created, each one a point of multiplicity $\ell$
and an infinitely near point of multiplicity $\ell-\epsilon$.
We will denote this type of compound multiple point
by the notation $[m_1,m_2]$, thus
indicating a multiple point $m_1$ and an infinitely near multiple point $m_2$.
Thus the above process produces two $[\ell,\ell-\epsilon]$ points on $Z$.

We refer to this operation as a \emph{$2$-throw} (of $E$ on $V$).

It is worth pointing out that
the contraction of $S$
which results in the identification of $F_1$ and $F_2$ on $Z''$
will force us to take this into account
when we will make the analysis of the linear systems on the degenerations.

Although one may imagine more complicated throws 
(when the $(-1)$--curve $E$ meets the double curve in more than two points)
we will not require such constructions in the sequel of this paper.

Note that in a $2$--throw,
if the two points $p_1$ and $p_2$ lie on the
same component of the double curve $R$,
then the surface $Z$ is a single component,
the curve $R$ becomes, after the $2$--throw, a nodal curve,
and the construction results in a non--normal component of the degeneration,
because of the identification of $F_1$ and $F_2$.
However this presents no real problems in the analysis;
the central fiber still has local normal crossings,
and all linear system computations on  the various components can be done
on their normalizations.

\section{Computation of the limit dimension}\label{sec:computation}

We will next perform a series of throws of $(-1)$--curves
starting from the first degeneration
described in section \S\ref{sec:firstdeg}.
This will create more complicated degenerations of the blown up plane,
which will have more than two components,
but still with local normal crossings and semistable.
These degenerations will carry a suitable limit of the relevant line bundle,
and it is our task to compute
the dimension of the space of sections of the limit bundle
to show that it is equal to the expected dimension.
Then, by appealing to semicontinuity,
we will prove non--speciality of the bundle on the general surface. 

Since we will have more than two components in the degeneration,
we cannot appeal directly to Proposition 3.3, (b) from \cite{CM98},
as we did in \S \ref {sec:firstdeg}, to make the computation of the dimension in the
limit. We therefore have to develop a more general analysis. 

In any event, as in the case of two components,
the space of limit sections is a fibre product,
namely, one must give sections on the components,
which agree on the double curves. 

In order to compute such a fibre product
it will be convenient for us to proceed iteratively,
by building up the degeneration one surface at a time.
This leads to an analogue of Proposition 3.3, (b) from \cite{CM98},
where the involved surfaces may be reducible.

To be specific,
suppose we have a (local normal crossings) surface $X_0= V\cup W$,
and a line bundle $\L$ on $X_0$,
restricting to $\L_V$ on $V$ and to $\L_W$ on $W$.
We denote by $C$ the intersection curve of $V$ and $W$,
with $\L$ restricting to $\L_C$ on $C$. 
Then, whether or not $V$ and $W$ are irreducible,
$H^ 0(X,\L)$ is the kernel of the difference map
\begin {equation}\label{eq:map}
H^ 0(V,\L_V)\oplus H^ 0(W,\L_W)\to H^ 0(C,\L_C).
\end{equation}
Geometrically this reads as follows: the curves in the linear system
$\L$ are Cartier divisors on $X_0$ and, as such, they have to be the union
of a curve in $\L_V$ and a curve in $\L_W$, which meet $C$ 
at the same points. 

If we know the dimension of the three spaces 
involved in the map \eqref {eq:map} and we also know that
the difference map is surjective, then we can compute the dimension
of $H^ 0(X,\L)$. The hypothesis of part (b) of Proposition 3.3, from \cite{CM98}
is equivalent to the surjectivity in that case. 

By considering the exact sequence (at the sheaf level)
\[
0 \to \L \to \L_V\oplus\L_W \to \L_C \to 0,
\]
we see that if $H^ 1(V,\L_V)=H^ 1(W,\L_W)=0$,
then $H^ 1(X_0,\L)=0$ if and only if
the difference map at the $H^0$ level is surjective.

In our case we will have $\pi:\X \to \Delta$
a flat, proper, semistable, local normal crossings degeneration 
of smooth projective surfaces $X_t$, $t\neq 0$, to the central fibre $X_0$; 
the total space $\X$ is endowed with a line bundle $\L_\X$, restricting to $\L$ on $X_0$.
This central fiber  is a divisor in the threefold $X$ of the form $\sum_{i=1}^ nV_i$.
We denote by $\L_i$ the restriction of $\L$ to $V_i$. 

Set $W_k=\sum_{i=1}^ kV_i$, and $\L^ {(k)}$ the restriction
of $\L$ to $W_k$; note that $W_k=W_{k-1}+V_k$.
Denote by $C_{k-1}$ the intersection of $W_{k-1}$ and $V_k$. The considerations
above apply and we can use them to compute $H^ 0(W_k,\L^ {(k)})$. For 
$k=n$ we have the desired space of sections. 

These arguments lead to the following statement:

\begin{proposition}\label {prop:fine} In the above setting, if:
\begin{itemize}
\item [{\rm (i)}] $H^1(V_i,{\L}_i)=0$, for all $i$;
\item [{\rm (ii)}] the difference maps 
$H^ 0(W_{k-1},\L^ {(k-1)})\oplus H^ 0(V_k,\L_k)\to H^ 0(C_{k-1},\L_{C_{k-1}})$
are surjective for all $k$;\end{itemize}
then $H^ 1(W_k,\L^ {(k)})=0$, for all $k$. Hence
$H^ 1(X_0,\L)=0$, and non--speciality of the bundle on the general 
surface $X_t$ follows by semicontinuity. 
\end{proposition}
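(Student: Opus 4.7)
The plan is to establish the vanishing $H^1(W_k,\L^{(k)})=0$ by induction on $k$. The base case $k=1$ is immediate: $W_1=V_1$, and hypothesis (i) gives $H^1(V_1,\L_1)=0$ directly.

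For the inductive step with $k\geq 2$, I would invoke the Mayer--Vietoris short exact sequence on the scheme $W_k=W_{k-1}\cup V_k$, already displayed in the paragraph preceding the statement:
$$
0 \to \L^{(k)} \to \L^{(k-1)} \oplus \L_k \to \L_{C_{k-1}} \to 0.
$$
The associated long exact sequence in cohomology contains the segment
$$
H^0(W_{k-1},\L^{(k-1)})\oplus H^0(V_k,\L_k) \xrightarrow{\delta_k} H^0(C_{k-1},\L_{C_{k-1}}) \to H^1(W_k,\L^{(k)}) \to H^1(W_{k-1},\L^{(k-1)})\oplus H^1(V_k,\L_k).
$$
Hypothesis (ii) says $\delta_k$ is surjective, while (i) together with the inductive hypothesis $H^1(W_{k-1},\L^{(k-1)})=0$ makes the rightmost term vanish. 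Hence $H^1(W_k,\L^{(k)})=0$, completing the induction. The case $k=n$ then yields $H^1(X_0,\L)=0$.

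For the final assertion, since $\pi:\X\to\Delta$ is flat and proper and $\L_\X$ is a line bundle, the function $t\mapsto h^1(X_t,\L_\X|_{X_t})$ is upper semicontinuous on $\Delta$. Its vanishing at $t=0$ therefore propagates to a Zariski neighborhood of $0$, and in particular to the general fibre $X_t$; this is exactly the non-speciality of $\L|_{X_t}$ as defined in the introduction.

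The only genuinely non-formal ingredient is the exactness at $\L_{C_{k-1}}$ of the Mayer--Vietoris sequence on the possibly reducible scheme $W_k$. This is where the local normal crossings hypothesis is essential: near every point $W_{k-1}$ and $V_k$ meet transversally along a smooth curve, so any local section of $\L$ on $C_{k-1}$ lifts to either side. Granted this, the argument is a purely formal iteration, and is exactly the multi-component analogue of Proposition 3.3(b) of \cite{CM98} that the text announces as the motivation for the statement.
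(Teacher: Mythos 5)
Your proof is correct and follows essentially the same route as the paper: the paper's argument (given in the paragraphs preceding the statement rather than in a proof environment) is precisely the iterated use of the exact sequence $0 \to \L \to \L_V\oplus\L_W \to \L_{C} \to 0$ applied to $W_k = W_{k-1}\cup V_k$, with hypothesis (ii) killing the connecting map and hypothesis (i) plus induction killing the $H^1$ of the pieces, followed by semicontinuity. Your added remark on why the sequence is exact at $\L_{C_{k-1}}$ (local normal crossings) is a point the paper leaves implicit, but it is consistent with the paper's setup.
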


\begin{remark}\label{rem:formal} {\rm
The surjectivity of the difference map in (ii) will 
follow, in our applications, exactly as in part (b) of Proposition 3.3 from \cite{CM98},
from a dimension count  and an appropriate transversality property, which will 
have to be checked case by case.
However a sufficient condition for the surjectivity
is that either one of the natural restriction maps is surjective,
which would follow from the vanishing of the appropriate $H^1$
(i.e., either $H^1(W_{k-1},\L^ {(k-1)}(-C_{k-1})) = 0$ or
$H^1(V_k,\L_k(-C_{k-1})) = 0$).
} 
\end{remark} 

\begin{remark}\label{rmk:fu}{\rm
We will have situations in which, due to the 
application of a $2$--throw,
a single component $V$ of $X_0$ is non--normal with a double curve $C$.
However, the normalization $\tilde V$ of $V$ will be smooth
and $V$ will be obtained by identifying
two non--intersecting curves $C_1$ and $C_2$,
both isomorphic to $C$.
Denote by $\L_{\tilde V}$, $\L_{C_i}$
the pull--backs of the bundle to $\tilde V$ and the $C_i$, respectively.
Then $\L_C$ injects into $\L_{C_1}\oplus \L_{C_2}$,
with quotient sheaf $\N$;
and $\L_V$ is the kernel of the natural map $\L_{\tilde V}\to \N$.
Therefore, if, as above, the corresponding map 
on global sections is surjective, and $H^ 1(\tilde V,\L_{\tilde V})=0$,
then $H^ 1(V,\L_{V})=0$. 
In particular 
$H^ 0(\tilde V,\L_{\tilde V}) \to  H^ 0(C,\N)$
is surjective if
$$H^ 0(\tilde V,\L_{\tilde V})\to H^ 0(C_1,\L_{C_1})\oplus H^ 0(C_1,\L_{C_1})$$
and
$$H^ 0(C_1,\L_{C_1})\oplus H^ 0(C_1,\L_{C_1})\to H^ 0(C,\N)$$
are both surjective. In our applications
the latter map will be surjective because $H^1(C,\L)=0$.

Alternatively, the above criterion can be deduced
from the cohomology of the exact sequence
$$0\to \L_V\to f_*\L_{\tilde V}\to \N\to 0,$$
and  $\N$, as above, is supported on $C$.

Again, from a geometric viewpoint, a curve in $\L_V$
corresponds to a curve in $\L_{\bar V}$ which meets
$C_1$ and $C_2$ in corresponding points, which are
glued on $V$. 

Note however that this is only a necessary condition:
in general, curves in $\L_{\bar V}$ meeting $C_1$ and $C_2$ 
in corresponding points
might not correspond to curves in $\L_V$. Actually, there could be 
more line bundles on $\bar V$ corresponding to the same line
bundle $\L_V$ on $V$. An easy example is the following: 
consider the curve of arithmetic genus 1 obtained by gluing
two distinct points $p_1,p_2$ on $\P^1 $. Then $V=\P^ 1\times C$ is
obtained from $\bar V=\P^ 1\times \P^ 1$ by gluing the two distinct
fibres $C_1,C_2$ over $p_1,p_2$. Take a non--trivial line bundle  
of degree 0 on $C$ and pull it back on $V$, thus getting a line 
bundle $\L_V$. Any such bundle corresponds to the trivial bundle
on $\bar V$, no non--zero section of which descends to a section
of $\L_V$.

In any event, $\dim{\L_V}$ is bounded above by the dimension
of the family of curves in $\L_{\bar V}$ meeting
$C_1$ and $C_2$ in corresponding points.}
\end{remark}

\section{The second degeneration: throwing the cubic}\label{sec:2deg}

In the proof of Proposition \ref{prop:ratio>=10/3},
the hypothesis $d/m \geq 10/3$ was used in a critical way
to show that the system on the surface $Z_1$ is nef,
and in particular to show that the intersection
with the cubic curve $C$ in the system $\L(3;2,1^6)$
is non--negative.
As soon as $d/m < 10/3$, this intersection becomes negative,
and we propose to employ a $2$-throw to remove it from $Z_1$,
creating a second degeneration.

We assume for this second degeneration that $16/5 \leq d/m < 10/3$.
Let us write 

\begin{equation}\label {eq:di}
d = 2c+e, \quad {\rm with}\quad e \in \{0,1\}.
\end{equation}

We return to the first degeneration,
and note that if $a \geq 0$, then
\[
C\cdot \L_{Z_1} =
3d-10m-2a
< 0.
\]

Hence $C$ splits exactly $10m-3d+2a$ times from $\L_{Z_1}$.
Furthermore it meets the double curve $E$ twice, at points $p_1$ and $p_2$.

We perform a $2$-throw of $C$ on $Z_1$,
blowing up $C$ twice and contracting the second ruled surface,
which is a $\P^1\times\P^1$, the other way.
Set
\begin{equation}\label {eq:bi}
b=5m+a-3c-e
\end{equation}
and note that $10m-3d+2a = 2b-e$,
so that the $2$-throw creates two $[b,b-e]$-points on $V_1$.
This results in our second degeneration, which now consists of three surfaces, as shown in Figure \ref {fig:5}: 

\begin{itemize}\label {item:2deg}
\item $V_2$, the transform of $V_1$.
The normalization $\tilde{V_2}$
is $V_1$ blown up at $p_1$ and $p_2$ (twice each),
creating exceptional curves $F_1$, $F_2$, $G_1$, and $G_2$.
We have $F_i^2 = -1$, $G_i^2 = -2$, $F_i \cdot G_j = \delta_{ij}$;
in addition, the $F_i$'s meet $E$ transversally (at the $p_i$'s).
The transform of the double curve $E$ now has self-intersection $-3$
on the normalization.
The surface $V_2$ is obtained from the normalization by suitably identifying $F_1$ and $F_2$.
The double curve $E$ becomes a nodal curve.
The linear system on the normalization $\tilde{V_2}$ has the form
$\L_{V_2} = \L(2m+a;m^4,[b,b-e]^2)$.

\item $Z_2$, the transform of $Z_1$. The surface
$Z_2$ is smooth: it is obtained from $Z_1$ by blowing down 
the curve $C$.
The linear system on $Z_2$ is obtained from the system $\L(d;2m+a,m^6)$ on $Z_1$
by removing $10m-3d+2a$ times the cubic (i.e. the system $\L(3;2,1^6)$),
and so has the form
$\L(10d-30m-6a;6d-18m-3a,(3d-9m-2a)^6)$.

\item $T_2$, the surface created by the $2$-throw, which is isomorphic
to a projective plane, meeting the surface $V_2$ along $G_1$ and $G_2$,
which are lines in $T_2$.  The linear system on $T$ has degree $e$.

\end{itemize}

\begin{figure}[ht]
\setlength{\unitlength}{0.5mm}

\setlength{\unitlength}{0.5mm}
\begin{center}
\begin{picture}(20,64)(20,20)
\qbezier(28,40)(0,75)(28,75)
\qbezier(28,40)(56,75)(28,75)
\put(26,62){$Z_2$}
\put(16,66){$E$}
\put(28,20){\line(0,1){20}}
\put(28,20){\line(-1,-2){10}}
\put(28,20){\line(1,-2){10}}
\put(00,16){$V_2$}
\put(17,23){$F_1$}
\put(30,23){$F_2$}
\put(08,3){$G_1$}
\put(38,3){$G_2$}
\put(26,3){$T_2$}
\put(16,32){\mbox{\footnotesize $-1$}}
\put(30,32){\mbox{\footnotesize $-1$}}
\put(12,10){\mbox{\footnotesize $-2$}}
\put(36,10){\mbox{\footnotesize $-2$}}

\end{picture}
\end{center}

\caption{the second degeneration}\label{fig:5}

\end{figure}
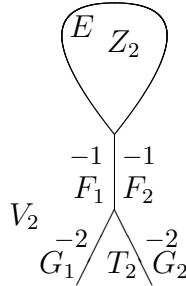

\medskip
Now let us analyze the linear systems on these three components.
First we note that the linear system on $T_2$
is non--special.

Let us turn our attention to $Z_2$, and set 
\begin{equation}\label{eq:alfa}
\alpha = d-3m.
\end{equation}
Hence we may write the system on $Z_2$ as
$\L(10\alpha-6a;6\alpha-3a,(3\alpha-2a)^6)$.
We have the series of quadratic transformations shown in the following table:
in each row 
the first number indicates the degree of the system,
the following numbers denote the multiplicities,
and we underline the base points used in each quadratic transformation:
\begin{equation}\label{Z2Cremona}
\begin{matrix}
10\alpha-6a; & \underline{6\alpha-3a}, 
& \underline {3\alpha-2a}, & \underline{3\alpha-2a},
& 3\alpha-2a, & 3\alpha-2a, 
& 3\alpha-2a,& 3\alpha-2a \\
8\alpha-5a; & \underline{4\alpha-2a},
& \alpha-a, & \alpha-a,
& \underline{3\alpha-2a}, & \underline{3\alpha-2a},
& 3\alpha-2a,& 3\alpha-2a \\
6\alpha-4a; & \underline{2\alpha-a},
& \alpha-a, & \alpha-a,
& \alpha-a, & \alpha-a,
& \underline{3\alpha-2a}, & \underline{3\alpha-2a} \\
4\alpha-3a; & 0 
& \alpha-a, & \alpha-a,
& \alpha-a, & \alpha-a,
& \alpha-a, & \alpha-a.
\end{matrix}
\end{equation}
  
The $0$ as the first multiplicity represents the cubic curve,
now blown down.
Hence the system on $Z_2$ is Cremona equivalent
to $\L(4\alpha-3a;(\alpha-a)^6)$,
which if $0 \leq a \leq \alpha$ is excellent
and therefore non--special by Proposition \ref {prop:strongantiK}. 

We next consider the system on $V_2$,
or rather on its normalization $\tilde{V_2}$.

\begin{lemma}\label{lem:V2} Assume $16/5 \leq d/m < 10/3$.
If
$5m-3c-e \leq a \leq d-3m$, i.e., if
$$b/2\leq a\leq \alpha,$$
then the system $\L_{V_2}$ is non--empty and non--special.
\end{lemma}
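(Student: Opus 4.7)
The plan is a short Cremona reduction followed by an application of Proposition~\ref{prop:strongantiK}. I would view the normalization $\tilde V_2$ as the blow-up of $\P^2$ at the eight points $q_1,\ldots,q_4,p_1,p_1',p_2,p_2'$, with $p_j'$ infinitely near $p_j$. Since $K_{\tilde V_2}^2=1$, we sit in the Del Pezzo regime $k\leq 9$ of Section~\ref{sec:GHH}. The $q_i$'s are generic in $\P^2$, the $p_j$'s are generic on the line $E$, and the infinitely near directions at the $p_j$'s produced by the $2$-throw are generic on their exceptional divisors; hence a reduced irreducible cubic through the entire eight-point cluster exists, putting us in the strong anticanonical case and making Proposition~\ref{prop:strongantiK} available.

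A direct computation in the orthogonal blow-up basis, together with the substitutions $b=5m+a-3c-e$ and $d=2c+e$, gives
\[
-K_{\tilde V_2}\cdot \L_{V_2} \;=\; 6\alpha-a.
\]
Under $a\leq\alpha$ this is $\geq 5\alpha$, and $d/m\geq 16/5$ forces $\alpha\geq 1$, so $K_{\tilde V_2}\cdot\L_{V_2}\leq -5$. Since $K\cdot\L$ is invariant under Cremona, any reduction of $\L_{V_2}$ to a standard system will be \emph{excellent} in the sense of Section~\ref{sec:GHH}.

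To reach a standard form I would use at most two Cremonas. Because $a\leq\alpha\leq m/3$, the three top multiplicities $m,m,m$ at $q_1,q_2,q_3$ satisfy $3m>2m+a$, so a first quadratic transformation based at them is legitimate and produces
\[
\L(m+2a;\;a^3,\;m,\;[b,b-e]^2).
\]
If $b\leq a$ this system is already standard; otherwise ($a<b\leq 2a$, which is compatible with $a\geq b/2$) I would perform a second Cremona based at $q_4,p_1,p_2$, letting the simple parts $m,b,b$ of those base points play the role of the three centers, thus obtaining
\[
\L(m+4a-2b;\;a^3,\;m+2a-2b,\;[2a-b,b-e]^2),
\]
whose three largest multiplicities are $m+2a-2b,a,a$ and sum exactly to the degree, so the reduced system is standard.

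Applying Proposition~\ref{prop:strongantiK}\,(i) and (iii) to the standard, strongly anticanonical, excellent system then gives effectiveness and non-specialness; since Cremona preserves both the dimension and the virtual dimension, these conclusions transfer back to $\L_{V_2}$. The main obstacle I anticipate is the bookkeeping for the compound structure $[b,b-e]$ through the second Cremona: one must track how the infinitely near direction at each $p_j$ is transported by the quadratic map and verify that, after reordering, the resulting multiplicity profile genuinely satisfies the standardness inequalities (the relevant one reducing to $b-e\leq 2a-b$, a consequence of $b\leq 2a$). The lower bound $a\geq b/2$ is exactly what guarantees that this second Cremona is well-posed and yields non-negative multiplicities, which is why it appears in the statement of the lemma.
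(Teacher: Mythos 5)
Your argument has a genuine gap at its foundation: the claim that ``the infinitely near directions at the $p_j$'s produced by the $2$-throw are generic on their exceptional divisors,'' and hence that $\tilde V_2$ is in the strong anticanonical case. In fact the $2$-throw forces both infinitely near points to lie \emph{along the double curve $E$}: the curve $E$ starts with $E^2=1$ on $V_1$ and ends with $E^2=-3$ on $\tilde V_2$, which is only possible if each of the two double blow-ups drops $E^2$ by $2$, i.e.\ the second centre at each $p_i$ is the point of $G_i$ lying on the proper transform of $E$. Consequently any cubic through the eight-point cluster would have to be tangent to the line $E$ at both $p_1$ and $p_2$, hence meet $E$ with multiplicity $\geq 4$ and contain it; so the anticanonical system $\L(3;1^4,[1,1]^2)$ has the $(-3)$-curve $E$ as a fixed component and there is \emph{no} reduced irreducible anticanonical curve. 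Proposition~\ref{prop:strongantiK}, on which your whole reduction rests, is therefore not available. This is not a technicality: the special position creates the $(-3)$-curve $E$ and the $(-2)$-curves $G_1,G_2$ as potential obstructions, and the correct route (the one the paper takes) is to verify nefness directly --- checking $\L_{V_2}\cdot E>0$, $\L_{V_2}\cdot G_i=e\geq 0$, and $\L_{V_2}\cdot D\geq 0$ for all $(-1)$-curves $D$, whose degrees are bounded by $5$ via the antibicanonical system --- and then invoke Proposition~\ref{prop:antiKnef}, which only requires the (weak) anticanonical case together with $\L\cdot K_{\tilde V_2}\leq -1$. Separately, effectivity must be argued (the paper does a virtual-dimension computation); your appeal to Proposition~\ref{prop:strongantiK}\,(i) for effectivity also presupposes the strong anticanonical case.

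There is also a secondary arithmetic problem with your Cremona reduction even taken on its own terms. At the boundary $a=b/2$ with $e=0$ (e.g.\ $(d,m)=(16,5)$, so $\alpha=1$, $a=1$, $b=2$), your second quadratic transformation turns $\L(7;1^3,5,[2,2]^2)$ into $\L(5;1^3,3,[0,2]^2)$, whose three largest multiplicities are $3,2,2$ with sum $7>5$; the system you arrive at is not standard, because the infinitely near multiplicities $b-e$ are untouched by the transformation and can exceed $2a-b$. Your own closing caveat (``the relevant inequality reduces to $b-e\leq 2a-b$'') is exactly the condition that fails: $b\leq 2a$ gives $2a-b\geq 0$ but not $2a-b\geq b-e$.
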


\begin{proof}
Recall that the linear system $\L_{V_2}$ 
has the form $\L(2m+a;m^4,[b,b-e]^2)$
on the normalization $\tilde{V_2}$.

We first claim that, with $a$ in the given range,
$\L_{V_2}$ is effective because its virtual dimension $v$ is non--negative.
After substituting $b=5m+a-3c-e$, $v$ becomes a function of $a$, $m$, $c$, and $e$;
for fixed $m$, $c$, and $e$,
one has $\partial v/\partial a = 6\alpha-3a-1/2 > 0$ in the given range.
Hence to see that $v \geq 0$,
it suffices to check this for the left endpoint $a=5m-3c-e$.
In this case
\[
2v=(5m-3c)(45c-71m)+(15c-23m)+6e(31m-19c-3).
\]
If $e=0$, then $d=2c$, and the inequalities on $d/m$ imply that
$8m\leq 5c$ and $3c < 5m$.
Hence $5m-3c \geq 1$, $45c-71m \geq m$, and $15c -23m \geq m$, so that $v \geq m$.
If $e=1$, then $d=2c+1$,
and the inequalities are $16m \leq 10c+5$ and $6c+3 < 10m$;
the first cannot be an equality for parity reasons, so that in fact $8m \leq 5c+2$,
and the second gives $3c+2 \leq 5m$.
Hence $5m-3c \geq 2$, and $45c-71m \geq m-18$,
so the quadratic part is at least $2m-36$.
The linear term is now $163m-99c-18$, and since $5m-3c \geq 2$,
we have $165m-99c \geq 66$; hence the linear term is at least $48-2m$.
Therefore $2v \geq (2m-36)+(48-2m) = 12$.

Note that we are in the anticanonical case, and 
the anticanonical pencil $\L(3; 1^4, [1,1]^2)$ of $\tilde{V_2}$
has the $(-3)$--curve $E$ as a fixed component,
and the movable part is the pencil $\L(2;1^ 4)$.

We have $b \leq 2m/5 < m$ since
$a \leq d-3m$ and $d/m\geq {16}/5$.
One has
$\L_{V_2} \cdot K_{\tilde{V_2}} =
18m-6d+a \leq -m$. 
By Proposition \ref{prop:antiKnef},
it suffices to prove that $\L_{V_2}$ is nef.
Let $D$ be a irreducible curve on $\tilde{V_2}$
such that $\L_{V_2}\cdot D < 0$.
Then the proof of Proposition \ref{prop:leq9b}
shows that $D$ can either be a $(-1)$--curve or a $(-2)$--curve
or the $(-3)$--curve $E$.
However $E$ is the curve in the system $\L(1;0^4,[1,1]^2)$
and so
$\L_{V_2}\cdot E =
6d - 18m -3a \geq 3m/5>0$.
By the structure of the anticanonical pencil,
the only $(-2)$--curves on $\tilde{V_2}$
are $G_1,G_2$, and we have
$\L_{V_2} \cdot G_i=e$ for $i=1,2$. 
Hence we are reduced to considering the $(-1)$--curves.

The antibicanonical system $\L(6; 2^4, [2,2]^2)$
again has $E$ as the fixed part
and the movable part is the $4$--dimensional system 
$\L(5; 2^4, [1,1]^2)$.
Since every $(-1)$--curve
is contained in a curve of the antibicanonical system,
we see that the $(-1)$--curves have degree at most $5$.

One can take these up in turn, by degree.
Those of degree $1$ are lines through two of the points;
they could be in one of the two systems
$\L(1;1^2,0^2,[0,0]^2)$ or $\L(1;1,0^3,[1,0],[0,0])$.
Since $m \geq b$,
the first type has the smallest intersection with $\L_{V_2}$;
this intersection is $a$, which is positive.

Those of degree $2$ are conics through five of the points;
the two with smallest intersection with $\L_{V_2}$ are in the systems
$\L(2;1^4,[1,0],[0,0])$ and $\L(2;1^4,[0,0],[1,0])$.
They meet the system with intersection number
 $a-(5m-3c-e)$ which is non--negative by hypothesis.

Those of degree $3$ which have smallest intersection with $\L_{V_2}$
are in the linear system $\L(3; 2, 1^3, [1,1], [1,0])$
(up to permutations of the points);
the  intersection number $(9d-28m-e)/2$ is positive since
$d/m\geq 16/5$.

Finally the curve of degree $4$ of interest is in the system
$\L(4; 2^3, 1, [1,1]^2)$;
this curve has intersection number $6d - 19m$ which
is positive since $d/m\geq 16/5$.

Since there are no $(-1)$-curves of degree $5$,
we have shown that $\L$ is nef, and this completes the proof.
\end{proof}

We may find an $a$ satisfying the hypothesis of the previous Lemma
if and only if $5m-3c-e \leq d-3m$;
this is equivalent to
   $5d-16m \geq e$, which is true if and only if $d/m \geq 16/5$.
This shows  that, under the assumptions of this section, condition (i) of Proposition \ref {prop:fine}, namely the 
non--speciality of the bundles on each component of the degeneration, holds with $a$ in the range
of Lemma \ref {lem:V2}.

As for  condition (ii)  of Proposition \ref {prop:fine}, we set $W_1=V_2$, $W_2=V_2+T_2$, $W_3=X_0$.

In order to compute $H^ 0(W_1,\L^ {(1)})$
we proceed as in Remark \ref {rmk:fu}
since $W_1=V_2$ is non--normal,
and is obtained from the normalization $\tilde V_2$ 
by identifying the two disjoint $(-1)$--curves $F_1$ and $F_2$. 
Applying those arguments,
we see that $H^1(W_1,\L^ {(1)}) = 0$
will follow from showing that
$H^0(\tilde V_2,\L_{\tilde V_2})$ surjects onto
the appropriate quotient of $H^0(F_1,\L_{F_1}) \oplus H^0(F_2,\L_{F_2})$;
it therefore suffices to show that the $H^0$ on $\tilde V_2$
surjects onto the direct sum itself,
and this is implied by the following:

\begin{lemma}\label {lem:V2F}
With the same numerical hypotheses as in Lemma \ref{lem:V2},
one has
$H^1(\tilde V_2,\L_{\tilde V_2}(-F_1-F_2))=0$.
\end{lemma}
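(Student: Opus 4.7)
The plan is to deduce the vanishing from Proposition \ref{prop:antiKnef} applied to (a cohomologically equivalent modification of) the system $\L' := \L_{\tilde V_2}(-F_1-F_2)$. In multiplicity notation this is $\L(2m+a;m^4,[b,b-e+1]^2)$, and direct computation gives $\L'\cdot F_i = b-e+1$, $\L'\cdot G_i = e-1$, and $\L'\cdot E = 6d-18m-3a-2$. Since $\tilde V_2$ is in the anticanonical case (as already exploited in the proof of Lemma \ref{lem:V2}, with $E$ as fixed component of the movable anticanonical pencil), what is needed is to verify that $\L'$ (or a cohomologically equivalent system) is effective, nef, and satisfies $\L'\cdot K_{\tilde V_2} \leq -1$.

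I would first deal with the $(-2)$-curves. If $e=1$, then $\L'\cdot G_i = 0$ and I set $\L'' := \L'$; if $e=0$, then $\L'\cdot G_i = -1$, so $G_1+G_2$ is forced once into the base locus. Since $\L'|_{G_i}\cong \O_{\P^1}(-1)$ has vanishing cohomology, the exact sequence
\[
0 \to \L'-G_1-G_2 \to \L' \to \L'|_{G_1+G_2} \to 0
\]
shows that $H^j(\L')\cong H^j(\L'-G_1-G_2)$ for $j=0,1$, so I may set $\L'' := \L'-G_1-G_2 = \L(2m+a;m^4,[b+1,b]^2)$ in this case. In either case $\L''\cdot G_i \geq 0$ and $\L''\cdot F_i = b$.

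For the $K$-condition, a direct calculation gives $\L'\cdot K_{\tilde V_2} = 18m+a-6d+2$, and since $(G_1+G_2)\cdot K_{\tilde V_2} = 0$, the same value holds for $\L''$. Under $a\leq d-3m$ and $d/m\geq 16/5$ (together with the integrality constraint, which forces $m\geq 4$), one deduces $\L''\cdot K_{\tilde V_2} \leq 15m-5d+2 \leq -1$. Effectivity of $\L''$ would then be established by the same monotonicity-in-$a$ virtual-dimension argument used in the proof of Lemma \ref{lem:V2}, checking the left endpoint $a = 5m-3c-e$ and observing that the passage from $\L_{V_2}$ to $\L''$ only diminishes the virtual dimension by a bounded and controllable amount.

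The main task is the verification of nefness of $\L''$. The intersection with the $(-3)$-curve $E$ equals $6d-18m-3a-2$, which is positive in the allowed range since $a\leq d-3m$ and $d/m\geq 16/5$ forces $d-3m\geq 1$; the intersections with $G_i$ and $F_i$ have already been handled. The $(-1)$-curves on $\tilde V_2$ have bounded degree (at most $5$) via the antibicanonical system $\L(6;2^4,[2,2]^2)$, exactly as in Lemma \ref{lem:V2}. One then runs through the same finite list (lines through two of the four base points, conics through five points, cubics, and the single quartic), in each case replacing $\L_{V_2}\cdot C$ by $\L_{V_2}\cdot C - (F_1+F_2)\cdot C$ (and, when $e=0$, also subtracting $(G_1+G_2)\cdot C$), and verifying non-negativity. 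The main obstacle is this bookkeeping: several of the bounds in Lemma \ref{lem:V2} are tight at the endpoints of the range $b/2\leq a\leq d-3m$, so the hypothesis $d/m\geq 16/5$ must be used sharply; however, the computations are routine and parallel to those already performed. Once nefness is established, Proposition \ref{prop:antiKnef} yields $H^1(\L'')=0$, and therefore $H^1(\tilde V_2,\L_{\tilde V_2}(-F_1-F_2))=H^1(\L')=0$, as required.
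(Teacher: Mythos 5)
Your overall strategy coincides with the paper's: establish effectivity and $\L\cdot K_{\tilde V_2}\le -1$ for the twisted system, control the curves meeting it negatively, and invoke Proposition \ref{prop:antiKnef}. Your setup is correct --- the identification $\L'=\L(2m+a;m^4,[b,b-e+1]^2)$, the intersection numbers with $F_i$, $G_i$ and $E$, and the splitting off of $G_1+G_2$ when $e=0$ are all right. But there is a genuine gap at the decisive step: the system $\L''$ is \emph{not} nef throughout the range $b/2\le a\le\alpha$, so the plan of ``verifying non-negativity'' for every curve on the list and then applying Proposition \ref{prop:antiKnef} cannot be carried out. Concretely, at the left endpoint $a=5m-3c-e$ with $e=0$, each conic $C_i$ in $\L(2;1^4,[1,0],[0,0])$ or $\L(2;1^4,[0,0],[1,0])$ meets exactly one of the $G_j$, so $\L''\cdot C_i=2a-b-1=a-(5m-3c)-1=-1$; likewise the quartic in $\L(4;2^3,1,[1,1]^2)$ meets both $F_1$ and $F_2$, so $\L'\cdot(\mathrm{quartic})=6d-19m-2$, which equals $-1$ for instance at $(d,m)=(16,5)$, $a=1$ (an admissible case). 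The paper's own proof concedes exactly this: it notes that the curves meeting the bundle negatively do so with intersection number exactly $-1$, and that this ``does not cause speciality'' --- a smooth rational curve $C$ with $\L\cdot C=-1$ restricts to $\O_{\P^1}(-1)$, hence splits off once with $H^0=H^1=0$ on the restriction, leaving both cohomology groups unchanged, after which one argues on the residual system. Your proof needs this additional step; as written, the conclusion ``once nefness is established\dots'' rests on a false premise.

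A secondary weakness is the effectivity claim. Passing from $\L_{V_2}$ to $\L'$ drops the virtual dimension by $2(b-e+1)$, and $b$ can be as large as roughly $2m/5$; this is not ``a bounded and controllable amount'', and the crude estimate $v(\L_{V_2})\ge m$ minus a quantity of order $4m/5+2$ does not settle non-negativity for small $m$. The paper instead redoes the endpoint computation explicitly, obtaining
\[
2v=(5m-3c)(45c-71m)+(39c-63m)+6e(31m-19c-1)-4
\]
at $a=5m-3c-e$ and checking non-negativity in the two parity cases $e=0,1$; some such explicit verification (together with the monotonicity in $a$, which does persist) is required to close this part of the argument.
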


\begin{proof}
The argument parallels the proof in Lemma \ref {lem:V2},
and therefore we will be brief.
We first show effectivity, and as above it suffices to prove
this for $a=5m-3c-e$; in this case 
we compute twice the virtual dimension $v$ to be now
\[
2v=(5m-3c)(45c-71m)+(39c-63m)+6e(31m-19c-1) - 4.
\]
If $e=0$, the quadratic part is again at least $m$,
and the linear part is at least $m-c$;
since $2m-c > 1$, we have $v \geq 0$.
If $e=1$, 
the quadratic part is (as above) at least $2m-36$,
and now the linear part is bounded below by $50-2m$
so that $2v \geq 4$.

Therefore the bundle is effective.
Moreover the intersection with the canonical bundle stays negative.
Finally one checks that the only curves
that may have negative intersection with the bundle
are some of the conics,
which may have intersection number $-1$ at least;
this does not cause speciality.
\end{proof}

Next we proceed to add the surface $T_2$,
creating $W_2$.
In this case the double curve is 
the union of two lines in the plane  $T_2$,,
which are glued to $G_1$ and $G_2$ on $V_2$.
The bundle on $T_2$, has degree $e$
and the restriction from the plane is surjective.
This proves that
$H^ 1(W_1,\L^ {(2)})=0$. 

Finally we add $Z_2$, completing the central fibre.
This time the double curve is the nodal curve $E$. 
We claim that the restriction map
from the sections  of the bundle
on $Z_2$ to the sections on $E$ is surjective,
which is sufficient for the criterion of Proposition \ref{prop:fine};
this then will give us the desired non--speciality of the bundle on $X_0$.
In fact, as we saw, the bundle on $Z_2$
is Cremona equivalent to $\L(4\alpha-3a;(\alpha-a)^6)$
and $E$   turns out to be Cremona equivalent to
a nodal curve in the linear system $\L(3;1^6)$.
The difference linear system is $\L(4\alpha-3a-3;(\alpha-a-1)^6)$.
This system is still excellent
and therefore non--special by Proposition \ref {prop:strongantiK}.
This implies the surjectivity of the restriction map.

We have now checked all the necessary details to conclude the following:

\begin{corollary}\label{16/5<=ratio}
If $16/5 \leq d/m < 10/3$
then the system $\L(d;m^{10})$ has the expected dimension.
\end{corollary}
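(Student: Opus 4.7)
The plan is to apply Proposition~\ref{prop:fine} to the three-component degeneration $X_0 = V_2 \cup T_2 \cup Z_2$ constructed above, which is obtained from the first degeneration by applying a $2$-throw to the cubic $C \in \L(3;2,1^6)$ on $Z_1$. This throw is forced upon us, because for any $a \ge 0$ one has $C \cdot \L_{Z_1} = 3d - 10m - 2a$, which becomes negative as soon as $d/m < 10/3$.

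First I would pin down the twist parameter $a$. Setting $b = 5m + a - 3c - e$ and $\alpha = d-3m$, I need $a$ in the range $b/2 \le a \le \alpha$ so that Lemma~\ref{lem:V2} applies; such an $a$ exists precisely when $5d - 16m \ge e$, which is exactly the hypothesis $d/m \ge 16/5$. With this choice, the bundle is non-special on each component: on $T_2 \cong \P^2$ the degree-$e$ system (with $e \in \{0,1\}$) is trivially non-special; on $Z_2$ the chain of quadratic transformations displayed in~\eqref{Z2Cremona} reduces the system to the excellent system $\L(4\alpha - 3a; (\alpha-a)^6)$, non-special by Proposition~\ref{prop:strongantiK}; and on the normalization $\tilde V_2$ non-speciality of $\L_{V_2}$ is Lemma~\ref{lem:V2}. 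This discharges hypothesis~(i) of Proposition~\ref{prop:fine}.

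Next I would verify hypothesis~(ii) by building up the central fibre one component at a time, with $W_1 = V_2$, $W_2 = V_2 \cup T_2$, $W_3 = X_0$. The step for $W_1$ is the subtle one: since $V_2$ is non-normal (the $2$-throw identifies $F_1$ with $F_2$), I would invoke Remark~\ref{rmk:fu}, reducing the vanishing of $H^1(V_2, \L_{V_2})$ to the surjectivity of $H^0(\tilde V_2, \L_{\tilde V_2})$ onto $H^0(F_1, \L_{F_1}) \oplus H^0(F_2, \L_{F_2})$; this in turn follows from $H^1(\tilde V_2, \L_{\tilde V_2}(-F_1-F_2)) = 0$, i.e.\ Lemma~\ref{lem:V2F}. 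Attaching $T_2$ is routine: the double curve is a pair of lines in the plane $T_2$, and restriction of a degree-$e$ planar system to two lines is obviously surjective. Attaching $Z_2$ requires the kernel bundle on $Z_2$ (twisted by $-E$) to have vanishing $H^1$; running the same chain of quadratic transformations as in~\eqref{Z2Cremona}, this kernel reduces to the still-excellent system $\L(4\alpha - 3a - 3; (\alpha-a-1)^6)$, and Proposition~\ref{prop:strongantiK} applies again.

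The main obstacle, as the preparatory Lemmas~\ref{lem:V2} and~\ref{lem:V2F} make clear, is the analysis on $V_2$: one must verify both effectivity and nefness of $\L_{V_2}$ (and of its twist $\L_{V_2}(-F_1-F_2)$) on the non-normal surface. Effectivity reduces to delicate numerical estimates of the virtual dimension at the boundary value $a = 5m-3c-e$ and splits into the two parity cases $e = 0$ and $e = 1$. Nefness requires a complete enumeration of the potentially offending negative curves on $\tilde V_2$: the $(-3)$-curve $E$, the $(-2)$-curves $G_1, G_2$, and every $(-1)$-curve of degree up to~$5$ (as cut out by the antibicanonical pencil), checking by hand that none of them meets the bundle negatively under the hypothesis $d/m \ge 16/5$. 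Once this analysis is in place, semicontinuity applied via Proposition~\ref{prop:fine} delivers the non-speciality of $\L(d;m^{10})$ on the general fibre, completing the proof.
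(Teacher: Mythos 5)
Your proposal is correct and follows essentially the same route as the paper: the same choice of $a$ in the range $b/2\leq a\leq\alpha$ (existing exactly when $d/m\geq 16/5$), the same component-by-component application of Proposition \ref{prop:fine} with $W_1=V_2$, $W_2=V_2\cup T_2$, $W_3=X_0$, the reduction of the non-normal $V_2$ step to Lemma \ref{lem:V2F} via Remark \ref{rmk:fu}, and the same Cremona reduction of the kernel system on $Z_2$ to the excellent system $\L(4\alpha-3a-3;(\alpha-a-1)^6)$.
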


\section{The third degeneration: throwing the conics}
\label{sec:3}

In what follows we assume that $19/6 \leq d/m < 16/5$.
Write $d$,
 $b$ and $\alpha$ as in \eqref {eq:di}, \eqref {eq:bi} and \eqref {eq:alfa}.
In this section we will assume the inequality $b > 2a$;
this is equivalent to $a < 5m-3c-e$.  We will also often assume $a\leq \alpha+1$.

We consider the second degeneration,
and we additionally twist the line bundle on the threefold by
$-(b-2a-e)T_2 = -(5m-a-3c-2e)T_2$.

We then have the configuration of the second degeneration,
but the bundles on $V_2$ and $T_2$ have now changed.
Since $T_2$ does not meet $Z_2$, the bundle on $Z_2$ is unchanged;
it is still of the form $\L(10\alpha-6a;6\alpha-3a,(3\alpha-2a)^ 6)$, which,
as we saw in \eqref{Z2Cremona}, is
Cremona equivalent to $\L(4\alpha-3a;(\alpha-a)^ 6)$.

The bundle on the plane $T_2$ now has degree
 $2b-4a-e>0$.

The bundle on $V_2$, which used to pull--back on $\tilde V_2$ as
$\L(2m+a;m^4,[b,b-e]^2)$,
has been twisted by
$-(b-2a-e)(G_1+G_2)$,
and now pulls back to  
$\L(2m+a; m^4, [2b-2a-e,2a]^2)$.

Let us consider the proper transforms on $\tilde{V_2}$ of the  two conics $C_1$, $C_2$ 
which are in the systems
\[
\L(2;1^4, [1,0], [0,0]), \quad \L(2;1^4, [0,0],[1, 0]).
\]
Their intersection number with the system on $\tilde{V_2}$ is
$ - (2(b-2a)-e)$, which is negative.

In this third degeneration we now execute two $2$-throws, one for each $C_i$.
Each $C_i$ will be removed $2b-4a-e$ times from the system on $V_2$,
and then blown down.
Each $C_i$ meets the double curve twice, but on different components:
$C_i$ meets $G_i$ transversally, and also meets the curve $E$.
Therefore we will create four new $[b-2a,b-2a-e]$-points,
two on the blow--up of $Z_2$
(let us call that surface $Z_3$ now),
and two on the blow--up of $T_2$
(which we call $T_3$ now).  The surfaces
$Z_3$ and $T_3$ will now meet along two curves $A_1, A_2$,
the exceptional divisors of the second blow--ups at the two points.

We also create two new planes $U_{1,3}$ and $U_{2,3}$
from the $2$-throw construction with $C_1$ and $C_2$,
respectively.
They will meet both $T_3$ and $Z_3$ along lines,
at the first blow--ups of each infinitely near point.
The bundle, restricted to $U_{i,3}$, has degree $e$.

The surface $V_2$ has both conics blown down;
we call the result $V_3$ (and its normalization $\tilde{V_3}$).
The linear system on $\tilde{V_3}$ is that of $\tilde{V_2}$,
with the two conics removed $ 2b -4a -e$  times.
It will be convenient to set 
$$\mu = 6d-19m$$
which is non--negative. Then linear system on $\tilde{V_3}$
corresponds to the system
$\L(9a+2\mu;(4a+\mu)^4,[2a,2a]^2)$.

Thus,  this third degeneration consists of the five surfaces
$V_3$, $Z_3$, $T_3$, $U_{1,3}$, and $U_{2,3}$, with double curves
as shown in  Figure \ref{fig:6}.

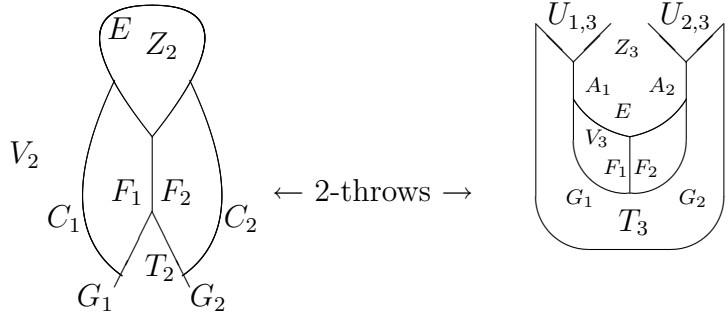
\begin{figure}[ht]
\setlength{\unitlength}{0.5mm}
\begin{center}
\begin{picture}(180,74)(10,5)

\qbezier(28,40)(0,75)(28,75)
\qbezier(28,40)(56,75)(28,75)
\qbezier(20,3)(0,17)(18,55)
\qbezier(36,3)(56,17)(38,55)
\put(26,62){$Z_2$}
\put(16,66){$E$}
\put(28,20){\line(0,1){20}}
\put(28,20){\line(-1,-2){10}}
\put(28,20){\line(1,-2){10}}
\put(00,16){$C_1$}
\put(47,16){$C_2$}
\put(17,23){$F_1$}
\put(30,23){$F_2$}
\put(08,-5){$G_1$}
\put(38,-5){$G_2$}
\put(-10,33){$V_2$}
\put(26,3){$T_2$}
\put(60,23){$\leftarrow$ $2$-throws $\rightarrow$}

\put(130,70){\line(1,-1){10}}
\put(140,60){\line(1, 1){10}}
\put(140,60){\line(0,-1){10}}
\qbezier(140,50)(145,42)(155,40)
\put(180,70){\line(-1,-1){10}}
\put(170,60){\line(-1, 1){10}}
\put(170,60){\line(0,-1){10}}
\qbezier(170,50)(165,42)(155,40)
\put(155,40){\line(0,-1){15}}
\put(155,50){\oval(30,50)[b]}
\put(155,70){\oval(50,120)[b]}
\put(133,70){$U_{1,3}$}
\put(163,70){$U_{2,3}$}
\put(148,62){ {\scriptsize $Z_3$} }
\put(148,45){ {\scriptsize $E$} }
\put(143,52) {{\scriptsize $A_1$} }
\put(160,52) {{\scriptsize $A_2$} }
\put(152,15){$T_3$}
\put(143,38){{\scriptsize $V_3$}}
\put(138,22){{\scriptsize $G_1$}}
\put(168,22){{\scriptsize $G_2$}}
\put(148,30){{\scriptsize $F_1$}}
\put(156,30){{\scriptsize $F_2$}}

\end{picture}
\end{center}

\caption{throwing the conics $C_1, C_2$}\label{fig:6}

\end{figure}

\begin{lemma}\label{lem:V3}
If $d/m \geq 19/6$ and $a\geq 0$
then the system $\L_{\tilde V_3}$ is non--empty and non--special.
\end{lemma}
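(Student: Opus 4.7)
The plan is to follow the strategy of Lemma \ref{lem:V2}: verify effectivity, show $\L_{\tilde V_3}\cdot K_{\tilde V_3}\leq -1$, prove nefness, and conclude via Proposition \ref{prop:antiKnef}. Since $\tilde V_3$ is obtained from $\tilde V_2$ by contracting the two disjoint $(-1)$-curves $C_1,C_2$, we remain in the anticanonical case. First I would compute the virtual dimension and the canonical intersection, either directly from $\L(9a+2\mu;(4a+\mu)^4,[2a,2a]^2)$ or by pulling back to $\tilde V_2$ via $\pi^*\L_{\tilde V_3} = \L_{\tilde V_2} - (2b-4a-e)(C_1+C_2)$; the expected outcome is
\[
2\,v(\L_{\tilde V_3}) = a^2 + 4a\mu + 3a + 2\mu,
\qquad
\L_{\tilde V_3}\cdot K_{\tilde V_3} = -2\mu-3a.
\]
For $a\geq 0$ and $\mu = 6d-19m\geq 0$, the virtual dimension is non-negative and the canonical intersection is at most $-1$, except in the trivial case $a=\mu=0$ where the system is the zero system and the claim is immediate.

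The core of the proof is nefness. The preliminary observation is that the $(-3)$-curve $E$ and the $(-2)$-curves $G_1,G_2$ on $\tilde V_2$ each meet $C_1+C_2$ with positive intersection ($E\cdot C_i = 1$ and $G_i\cdot C_i = 1$), so their images on $\tilde V_3$ become $(-1)$-curves $\bar E,\bar G_i$; direct computation gives $\bar E\cdot\L_{\tilde V_3} = 2\mu+a$, $\bar G_i\cdot\L_{\tilde V_3}=0$, while the $F_i$ (which are disjoint from $C_1,C_2$) satisfy $F_i\cdot\L_{\tilde V_3}=2a$, all non-negative. Any remaining negative irreducible curve $D$ on $\tilde V_3$ is, by the argument of Proposition \ref{prop:leq9b}, smooth rational of self-intersection $-1$ or $-2$. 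Using the antibicanonical system on $\tilde V_3$ to bound the degree of such curves, I would enumerate the candidates as in Lemma \ref{lem:V2}, organized by degree (lines, conics, cubics, quartics, quintics) and by the distribution of multiplicity across the four simple points and the two compound points $[2a,2a]$, and verify non-negativity of each intersection with $\L_{\tilde V_3}$.

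The main obstacle is precisely this case-by-case enumeration. Compared with Lemma \ref{lem:V2}, the compound points have become symmetric $[2a,2a]$, which reshuffles the intersection computations and may introduce new negative curves of small degree that must be checked individually. However, each intersection is a linear function of $a$ and $\mu$, and the hypothesis $d/m\geq 19/6$ (i.e. $\mu\geq 0$) together with $a\geq 0$ should suffice to force non-negativity in every case. Once nefness is established, Proposition \ref{prop:antiKnef} yields non-speciality, and the virtual dimension bound gives non-emptiness.
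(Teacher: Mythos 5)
Your proposal is correct in outline but takes a genuinely different route from the paper. The paper does not verify nefness at all: it Cremona-reduces the planar system $\L(9a+2\mu;(4a+\mu)^4,[2a,2a]^2)$ in four quadratic transformations to $\L(2a+\mu;a^3,\mu)$, which is standard and excellent when $a\leq\mu$ (and becomes so after one more quadratic transformation based at the three points of multiplicity $a$ when $a>\mu$), and then concludes by Proposition \ref{prop:strongantiK}(iii). That argument is a few lines and entirely sidesteps the enumeration of negative curves. Your route --- effectivity from $v\geq 0$, the canonical intersection $\L_{\tilde V_3}\cdot K_{\tilde V_3}=-2\mu-3a\leq -1$ (with the case $a=\mu=0$ trivial), nefness by inspection of negative curves, then Proposition \ref{prop:antiKnef} --- is legitimate, and all the numbers you actually compute are right: $2v=a^2+4a\mu+3a+2\mu$, $\bar E\cdot\L_{\tilde V_3}=a+2\mu$, $\bar G_i\cdot\L_{\tilde V_3}=0$, $F_i\cdot\L_{\tilde V_3}=2a$. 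What your approach buys is a description of the nef cone near $\L_{\tilde V_3}$; what it costs is exactly the case-by-case check you defer, which is the whole content of the proof on this route and which the paper's Cremona reduction replaces wholesale.

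Two cautions if you carry the enumeration out. First, the eight base points are not in general position (the two compound points sit on the line $E$, with the infinitely near points in the $G_i$ directions), so ``enumerate as in Lemma \ref{lem:V2}'' must be justified via the structure of the anticanonical and antibicanonical systems of $\tilde V_2$, as that lemma does; you cannot simply list $(-1)$-classes of a general eight-point blow-up. Second, the binding case is not among the curves you checked: it is the four quartics $\L(4;2,2,2,1,[1,1]^2)$ and its permutations, whose intersection with $\L_{\tilde V_3}$ equals exactly $\mu=6d-19m$. This is precisely where the hypothesis $d/m\geq 19/6$ enters (and these are the curves thrown in the fourth degeneration when $d/m<19/6$), so your claim that every intersection is forced non-negative is true but sharp; the check does go through, and the rest of your argument then closes correctly.
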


\begin{proof}
The system on $\tilde V_3$ is not standard, and we have
the following series of quadratic transformations:
\[
\begin{matrix}
9a+2\mu; & \underline{4a+\mu}, & \underline {4a+\mu},
& \underline{4a+\mu}, & 4a+\mu, 
& [2a,2a], & [2a,2a] \cr
6a+\mu; & a, & a,
& a, & \underline{4a+\mu},
& [\underline{2a}, {2a}], & [\underline{2a},2a] \cr
4a+\mu; & a, & a,
& a, & \underline{2a+\mu},
& \underline{2a}, & \underline{2a} \cr
2a+\mu; & a, & a,
& a & \mu & & & \cr
\end{matrix}
\]

If $a\leq \mu$ the final system is excellent. Otherwise,
perform a quadratic transformation based at the three points of multiplicity
$a$, getting the system $\L(3a;\mu^ 4)$, which is excellent.
\end{proof}

The bundle $\L_{Z_3}$ on $Z_3$  is Cremona equivalent to
$\L(4\alpha-3a;(\alpha-a)^6, [b-2a,b-2a-e]^2)$,
where the two compound multiple points
lie on an irreducible nodal cubic curve
passing through the six points of multiplicity $\alpha-a$;
this cubic is the image of the original double curve $E$
under the Cremona transformation. The existence of $E$
implies that we are in the strong anticanonical case.

\begin{lemma}\label{lem:Z3}
If $19/6 \leq d/m < 16/5$ and $0\leq a\leq\alpha+1$
then the system $\L_{Z_3}$ is non--empty and non--special.
\end{lemma}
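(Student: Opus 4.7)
The plan is to apply Proposition \ref{prop:strongantiK} to $\L_{Z_3}$ in the strong anticanonical case provided by the nodal cubic $D\in|-K_{Z_3}|$, after Cremona-reducing to a standard form.

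First, I would compute $\L_{Z_3}\cdot K_{Z_3}$ in the Cremona-equivalent form $\L(4\alpha-3a;(\alpha-a)^6,[b-2a,b-2a-e]^2)$. Using the intersection rule for compound multiple points (each $[n_1,n_2]$ contributes $n_1+n_2$ to $-\L\cdot K$), a direct calculation yields
\[
\L_{Z_3}\cdot K_{Z_3} = -3(4\alpha-3a)+6(\alpha-a)+2\bigl((b-2a)+(b-2a-e)\bigr) = -(2\mu+a),
\]
where $\mu=6d-19m\geq 0$ by the hypothesis $d/m\geq 19/6$. Thus $\L_{Z_3}\cdot K_{Z_3}\leq 0$, and is strictly negative outside the boundary case $a=\mu=0$. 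Since $\L\cdot K$ is invariant under Cremona transformations, the (almost) excellence property will persist through any reduction, and we only need to arrange standardness.

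Next, following the template of Lemma \ref{lem:V3}, I would construct an explicit table of quadratic Cremona transformations bringing $\L_{Z_3}$ into standard form. In our range $d/m<16/5$ one checks that $b-2a\geq\alpha-a$, so the compound multiplicities exceed the simple ones and the starting system is typically non-standard; the reductions must therefore involve the compound points. Such Cremonas are geometrically available because the two compound points lie on the anticanonical nodal cubic $D$ together with (enough of) the simple base points, providing the curve data to realize the birational transformation. Once a standard form is reached, it is automatically excellent (by the Cremona-invariance of $\L\cdot K$ above), and Proposition \ref{prop:strongantiK}(i) then yields effectivity while part (iii) yields non-speciality.

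The main obstacle is the explicit bookkeeping of the Cremona sequence in the presence of compound multiple points, including the verification that the intermediate systems retain a reduced irreducible anticanonical curve (i.e., that we remain in the strong anticanonical case). Finally, the boundary case $\mu=a=0$, which corresponds up to rescaling to $(d,m)=(19,6)$ with $a=0$, must be handled separately: here the system is only almost excellent, $\L^2=0$ and $v=0$, so one concludes by a direct dimension count together with the identification of the unique member of the system as a specific divisor supported on the anticanonical nodal cubic.
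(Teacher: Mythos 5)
Your overall strategy is the paper's: compute $\L_{Z_3}\cdot K_{Z_3}=-(2\mu+a)$, Cremona--reduce to a standard system, and invoke Proposition \ref{prop:strongantiK}. The intersection number and the observation $b-2a>\alpha-a$ are both correct. But there are two genuine gaps.

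First, the Cremona reduction is exactly the content of the lemma, and you defer it as ``the main obstacle.'' The paper shows that very little bookkeeping is actually needed: if $d/m\geq 54/17$ the system $\L(4\alpha-3a;(\alpha-a)^6,[b-2a,b-2a-e]^2)$ is already standard (after splitting off six harmless $(-1)$--curves when $a=\alpha+1$, a case your argument also overlooks since then $\alpha-a=-1$), and if $d/m<54/17$ a \emph{single} quadratic transformation based at the three largest multiplicities produces a system whose three largest multiplicities are $7d-22m-a$, $7d-22m-a$, $5m-3c-a-2e$ and sum exactly to the degree, hence standard. Without exhibiting this (or some) terminating reduction and checking the resulting multiplicity inequalities, the claim ``once a standard form is reached it is excellent'' has nothing to apply to.

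Second, your treatment of the boundary case $\mu=a=0$ rests on false numerical claims. Writing $d=19t$, $m=6t$, the system on $Z_3$ is $\L(4t;t^6,[b,b-e]^2)$ with $b\approx 3t/2$, and one computes $\L_{Z_3}^2=t^2>0$ and $v=t^2/2>0$: the system is neither of self--intersection zero nor rigid, so there is no ``unique member supported on the anticanonical cubic'' to identify, and a direct dimension count in that spirit cannot close the case. What fails in this case is only excellence ($\L\cdot K=0$, so the system is merely almost excellent). The paper's fix is the restriction sequence to the anticanonical curve $E$: the restricted bundle has degree $0$ on the arithmetic--genus--one curve $E$ and is non--trivial by generality of the points, hence has vanishing $H^0$ and $H^1$; the kernel system $\L_{Z_3}(-E)$ is excellent, hence non--special; and the long exact sequence gives $h^1(\L_{Z_3})=0$. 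Some argument of this kind is needed to complete your proof.
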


\begin{proof}
We have
\begin{equation}\label{eq:kappa}
\L_{Z_3} \cdot K_{Z_3} =-2\mu-a
\end{equation}
which is non--positive.
Moreover $b-2a >\alpha-a$, since this
is equivalent to $16m \geq 5d+e$.
Therefore the three largest multiplicities of the system
$\L(4\alpha-3a;(\alpha-a)^6, [b-2a,b-2a-e]^2)$
are those of the compound points.
Hence, after removing six innocuous $(-1)$--curves
in case $a=\alpha+1$, this system is standard
if $3b-e-3a\leq 4\alpha$,
which is equivalent to
$d/m \geq 54/17$. Note that $19/6 < 54/17 < 16/5$.
Thus if $d/m \geq 54/17$, we also have $\mu>0$ and the system is excellent;
therefore by part (iii) of Proposition \ref{prop:strongantiK}, 
the system is non--special.

If $d/m < 54/17$,
we perform a quadratic transformation based at the three points
of largest multiplicity (the $b-2a$, $b-2a$, and $b-2a-e$ points),
obtaining the system
$\L(25c+12e-39m-3a; (\alpha-a)^6, 7d-22m-a, 7d-22m-a-e, 7d-22m-a, 5m-3c-a-2e)$.
Now the three largest multiplicities are
$7d-22m-a$, $7d-22m-a$, and $5m-3c-a-2e$
(since $d/m \geq 19/6$);
their sum is equal to the degree $25c+12e-39m-3a$,
and so this system is standard.
We still have the intersection with the canonical class given by \eqref {eq:kappa} and so,
if either $d/m > 19/6$ or $a>0$, this system is excellent;
we conclude by applying  part (iii) of
Proposition \ref{prop:strongantiK}
as above.

If $d/m = 19/6$ and $a=0$, then the system is only almost excellent.
Its restriction to the anticanonical curve $E$ has degree $0$,
and, because of the generality of the original choice of the points,
the restriction to $E$ is a non--trivial bundle, which therefore
has no $H^ 1$, since $E$ has arithmetic genus $1$. The
kernel of the restriction to $E$ is excellent,
and therefore is non--special by  part (iii) of Proposition \ref{prop:strongantiK}.
The usual restriction exact sequence now shows that the original system {$\L_{Z_3}$}
is non--special as well. \end{proof}

Let us now turn our attention to the surface $T_3$,
the quadruple blow--up of the plane $T_2$;
the linear system there is 
$\L_{T_3} = \L(2b-4a-e; [b-2a,b-2a-e]^2)$.

\begin{lemma}\label{lem:T3} 
The system $\L_{T_3}$ is non--empty and non--special.
\end{lemma}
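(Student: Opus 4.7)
The plan is to exploit a natural conic fibration on $T_3$: the pencil $|C|$ of plane conics through $P_1, P_2$ tangent to the two prescribed infinitely near directions, where $C := 2H - E_1 - F_1 - E_2 - F_2$, $H$ is the line class on $T_2$, and $E_i, F_i$ are the total transforms of the two successive blow-ups at $P_i$ and $P_i'$. I would first compute $C^2 = 0$ and $C \cdot K_{T_3} = -2$, and observe that, by generality of the two infinitely near directions (which come from the two distinct conics $C_1, C_2$ thrown off of $V_2$), two distinct members of $|C|$ meet only at $P_1, P_2$ with intersection multiplicity $2$ at each point, so their strict transforms on $T_3$ are disjoint. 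Hence $|C|$ is a base-point-free pencil defining a conic fibration $\phi: T_3 \to \P^1$, and $h^0(jC) = j + 1$ for every $j \geq 0$.

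Setting $k := b - 2a$, which is positive under the hypotheses of this section, the relevant bundle is $\L_{T_3} = (2k-e)H - kE_1 - (k-e)F_1 - kE_2 - (k-e)F_2$, with virtual dimension $k$ if $e = 0$ and $k-1$ if $e = 1$. When $e = 0$, comparing coefficients shows directly that $\L_{T_3} = kC$, so $h^0(\L_{T_3}) = k+1$ and we are done. When $e = 1$, the line $L := H - E_1 - E_2$ joining $P_1$ and $P_2$ is a $(-1)$-curve with $L \cdot \L_{T_3} = -1$, hence a fixed component of $\L_{T_3}$; a coefficient comparison then yields $\L_{T_3} = L + (k-1)C$, and the exact sequence $0 \to (k-1)C \to \L_{T_3} \to \L_{T_3}|_L \to 0$ (with $\L_{T_3}|_L$ a line bundle of degree $-1$ on $L \cong \P^1$) gives $h^0(\L_{T_3}) = h^0((k-1)C) = k$ and $h^1(\L_{T_3}) = h^1((k-1)C) = 0$. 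Either way, $\L_{T_3}$ is non-empty of the expected dimension.

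The delicate step is the verification that $|C|$ is genuinely base-point-free: this depends on the two infinitely near directions at $P_1, P_2$ being in sufficiently general position on $T_2$, which should follow from the genericity of the original ten base points in $\P^2$ together with the independence of the two $2$-throws producing $T_3$. If one prefers to avoid an explicit discussion of the pencil, an alternative is to observe that $T_3$ is in the strong anticanonical case (a general element of $|-K_{T_3}| = |3H - E_1 - F_1 - E_2 - F_2|$ is an irreducible cubic through $P_1, P_2$ tangent to the two directions), check that $\L_{T_3}$, or $\L_{T_3} - L$ in the case $e = 1$, is nef by running through the short list of $(-1)$- and $(-2)$-curves on $T_3$ (namely $F_1, F_2, L, M_i := H - E_i - F_i$, and the strict transforms $E_i - F_i$), and then invoke Proposition \ref{prop:antiKnef}.
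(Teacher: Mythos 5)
Your proposal is correct and follows essentially the same route as the paper, which identifies $\L_{T_3}$ as composed of $b-2a-e$ members of the pencil of conics bitangent to the lines $G_1,G_2$ at the two compound base points, plus the fixed line joining them when $e=1$; your divisor-class computations ($\L_{T_3}=kC$ for $e=0$ and $\L_{T_3}=L+(k-1)C$ for $e=1$, with $L\cdot\L_{T_3}=-1$) simply make that decomposition explicit. The base-point-freeness of $|C|$ that you flag as delicate is in fact immediate, since $C^2=0$ and $|C|$ contains the two members $G_1+G_2$ and the one supported on the chord and the first exceptional curves, which share no component.
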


\begin{proof}  The  linear system $ \L_{T_3}$ is
composed of $b-2a-e$ conics in a pencil of conics bitangent to 
the pair of lines corresponding to $G_1$ and $G_2$,
plus, if $e=1$, the fixed line through the base points.  This system 
is clearly non--special.
\end{proof}

Finally the surfaces $U_{i,3}$ are planes, with systems
of non--negative degree on them, hence also non--special.

We now turn our attention to check the required surjectivity criteria 
from Proposition \ref {prop:fine}. In this case we set
\[
W_1=V_3, W_2=W_1+T_3, W_3=W_2+Z_3, W_4=X_0.
\]

The analysis is similar to that for the second degeneration.
We will need the following two lemmas.

\begin{lemma}\label {lem:V3F}
If $19/6 < d/m < 16/5$, and $a\geq 0$ then 
$H^1(\tilde V_3,\L_{\tilde V_3}(-F_1-F_2))=0$. 
\end{lemma}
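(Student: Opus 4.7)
The plan is to parallel the proof of Lemma \ref{lem:V3}, carrying the twist by $-F_1-F_2$ through the same Cremona reduction. Since $F_i$ is the $(-1)$-curve at the deeper exceptional of the $i$-th compound base point and $\L_{\tilde V_3}\cdot F_i = 2a$, twisting by $-F_i$ simply raises the infinitely near multiplicity at the $i$-th compound point from $2a$ to $2a+1$, so that $\L_{\tilde V_3}(-F_1-F_2)$ corresponds to the system $\L(9a+2\mu;(4a+\mu)^4,[2a,2a+1]^2)$.

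First I would verify effectivity. From the exact sequence
\[
0 \to \L_{\tilde V_3}(-F_1-F_2) \to \L_{\tilde V_3} \to \L_{\tilde V_3}\vert_{F_1\sqcup F_2} \to 0,
\]
and using $\L_{\tilde V_3}\vert_{F_i}\cong \O_{\P^1}(2a)$, one obtains
$v(\L_{\tilde V_3}(-F_1-F_2))=v(\L_{\tilde V_3})-2(2a+1)$. A direct computation, in the spirit of the effectivity check in Lemma \ref{lem:V2F}, shows that this stays non-negative throughout the admissible range $19/6<d/m<16/5$, $a\geq 0$.

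Next I would apply the same sequence of three quadratic transformations used in the proof of Lemma \ref{lem:V3}. None of those Cremona transformations is based at an infinitely near point of a compound base point (they use either the four simple $(4a+\mu)$-points or the outer $2a$-multiplicities), so the extra $+1$ added to each infinitely near multiplicity rides along passively through each step. The reduction therefore terminates at the excellent system $\L(2a+\mu;a^3,\mu)$ of Lemma \ref{lem:V3} (with the additional quadratic transformation when $a>\mu$), augmented by two disjoint simple infinitely near conditions. Non-speciality then follows from Proposition \ref{prop:strongantiK}(iii).

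The main obstacle is to justify that the two extra infinitely near conditions are imposed transversally on the reduced excellent system, i.e.\ to rule out $(-1)$-curves through those infinitely near points whose intersection with the reduced bundle drops below $-1$. This is a finite enumeration analogous to the $(-1)$-curve analysis at the end of the proof of Lemma \ref{lem:V2}, and the only delicate cases will be those in which the reduced degree or one of the reduced multiplicities becomes zero, which are handled exactly as in the boundary cases of Lemmas \ref{lem:V2F} and \ref{lem:V3}.
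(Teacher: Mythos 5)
Your opening reduction is fine (and, modulo unloading the $(-2)$-curves $G_i$, your $[2a,2a+1]$ agrees with the paper's $[2a+1,2a]$), but the core of the argument breaks down at the claim that the extra conditions ``ride along passively'' through the Cremona reduction of Lemma \ref{lem:V3}. The second and third quadratic transformations in that reduction \emph{are} based at the outer points of the two compound clusters, and after the $G_i$ (which now meet the bundle in $-1$) split off once each, the extra $+1$ sits on those outer multiplicities and enters the degree bookkeeping of each transformation. The correct endpoint is $\L(2a+\mu-2;a^3,\mu-2)$ (after splitting two $(-1)$-curves), not $\L(2a+\mu;a^3,\mu)$ augmented by two simple infinitely near conditions: the two candidates have virtual dimensions differing by $4a$, so they cannot both be Cremona-equivalent to $\L_{\tilde V_3}(-F_1-F_2)$ when $a>0$. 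Geometrically, in the reduced model the $F_i$ become lines through the point of multiplicity $\mu$, so subtracting them drops the degree by $2$ and that multiplicity by $2$.

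Your effectivity claim is also false, and this is not a cosmetic point. For $\mu=1$, $a=1$ the system $\L(9a+2\mu;(4a+\mu)^4,[2a,2a+1]^2)=\L(11;5^4,[2,3]^2)$ has virtual dimension $-1$ and is in fact empty (it reduces to $\L(1;1^3)$); the lemma only asserts $H^1=0$, and the paper's proof explicitly records this case as ``empty with no $H^1$.'' Since an excellent system is effective, no argument that ends with ``the reduced system is excellent, apply Proposition \ref{prop:strongantiK}(iii)'' can cover the whole admissible range. The paper instead runs a case analysis on the reduced system $\L(2a+\mu-2;a^3,\mu-2)$: the subcase $\mu=1$ (with further $(-1)$-curve splittings and the empty subcase $a=1$), the subcase $\mu>1$, $a\le\mu-2$ where the system is excellent, and the subcase $a>\mu-2$ where one more quadratic transformation yields the excellent system $\L(3a;(\mu-2)^4)$. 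Your proposal is missing both the correct reduced system and this case division.
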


\begin{proof}
As we saw in the proof of Lemma \ref {lem:V3},
the system $\L_{\tilde V_3}$ corresponds to 
$\L(9a+2\mu;
(4a+\mu)^ 4, [2a,2a]^ 2)$. By subtracting $F_1$ and $F_2$, 
one sees that the corresponding system is  $\L(9a+2\mu;
(4a+\mu)^ 4, [2a+1,2a]^ 2)$. If $a=0$, this is the system
$\L(2\mu; \mu^ 4, 1^ 2)$ which is non--special since
$\mu>0$. Assume $a>0$. Proceeding as  in the proof of Lemma \ref  {lem:V3}, we see 
that two $(-1)$--curves split once and the residual system is
Cremona equivalent to $\L(2a+\mu-2;a^ 3,\mu-2)$.

If $\mu=1$
one more $(-1)$--curve splits once and the residual system is 
$\L(2a-1;a^ 3)$. If $a=1$, this is empty with no $H^ 1$. If $a>1$, 
three more $(-1)$--curves split once and the residual system is 
$\L(2a-4;(a-2)^ 3)$, which is non--special.

If $\mu>1$, and $a\leq \mu-2$, the system is excellent,
hence non--special. Otherwise, perform a Cremona transformation
at the three points of multiplicity $a$, thus obtaining the system
$\L(3a;(\mu-2)^ 4)$, which is excellent, hence non special.
\end{proof}

Recall that the double curve created by adding $Z_3$ to $W_2$ is $E+A_1+A_2$,
where the $A_i$'s are the exceptional divisors
of the second blow--ups of the two compound multiple points on $Z_3$.
(As shown in Figure \ref {fig:6}, these are the intersection curves of $Z_3$ with $T_3$.)

\begin{lemma}\label {lem:Z3F} 
If $19/6 < d/m < 16/5$, and  $0\leq a\leq \alpha+1$
then 
$H^1(Z_3,\L_{Z_3}(-E-A_1-A_2))=0$.

\end{lemma}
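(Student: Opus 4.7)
The main geometric input is the identification $E=-K_{Z_3}$ on $Z_3$. Indeed, $\bar E$ is anticanonical on $Z_2$ (a nodal cubic through the six simple base points), and the four blow-ups forming $Z_3\to Z_2$ are concentrated at smooth points of $\bar E$ with infinitely near directions along $\bar E$ itself, as dictated by the 2-throw geometry. The blow-up formula then yields $E=-K_{Z_3}$; in particular $E\cdot A_j=1$ and $E+A_1+A_2$ is a nodal curve of arithmetic genus one, so we remain in the strong anticanonical setting of Proposition \ref{prop:strongantiK}.

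With this in hand, the plan is to mirror the argument of Lemma \ref{lem:Z3} for the twisted system $\L':=\L_{Z_3}(-E-A_1-A_2)=\L_{Z_3}\otimes K_{Z_3}(-A_1-A_2)$. Using $E=-K_{Z_3}$ together with the Cremona equivalence used in Lemma \ref{lem:Z3}, a direct divisor-class computation gives
\[
\L' \;\sim\; \L(4\alpha-3a-3;\;(\alpha-a-1)^6,\;[b-2a-1,\,b-2a-e]^2).
\]
A short intersection calculation then yields $\L'\cdot K_{Z_3}=-2\mu-a+1$. Since the strict inequality $d/m>19/6$ forces $\mu\geq 1$, we have $\L'\cdot K_{Z_3}\leq -1$ throughout the range of hypothesis, so $\L'$ is excellent.

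To conclude, I will check standardness by sorting all the multiplicities (counting both entries of each compound point exactly as in Lemma \ref{lem:Z3}) and comparing the sum of the three largest with the degree $4\alpha-3a-3$. If the system is already standard, non-speciality follows from Proposition \ref{prop:strongantiK}(iii). Otherwise, I perform a formal quadratic Cremona transformation based at the three points of largest multiplicity, using the same bookkeeping as in Lemma \ref{lem:Z3}; the canonical degree is Cremona invariant, so excellence persists, and the hypothesis $d/m<16/5$ forces the transformed system to be standard. The main technical obstacle is handling the Cremona when two of the chosen base points are infinitely near each other, but this is dealt with purely on the divisor class level just as in Lemma \ref{lem:Z3}. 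The delicate boundary case $d/m=19/6$ which required an auxiliary restriction-to-$E$ argument in Lemma \ref{lem:Z3} is automatically avoided here by the strict hypothesis, so no such extra case is needed.
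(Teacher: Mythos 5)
Your proposal is correct and follows essentially the same route as the paper's proof: Cremona--reduce the twisted system to $\L(4\alpha-3a-3;(\alpha-a-1)^6,[\,\cdot\,,\cdot\,]^2)$, note that its canonical degree is negative (your value $1-2\mu-a$ is in fact the correct one; the paper's stated $1-2\mu-5a$ appears to be a typo), and conclude excellence, after at most one further quadratic transformation, via Proposition \ref{prop:strongantiK}(iii). The remaining differences are cosmetic: your compound multiplicities $[b-2a-1,b-2a-e]$ agree with the paper's $[b-2a-e,b-2a-1]$ after unloading the $(-2)$--curves when $e=0$ (which changes neither $H^0$ nor $H^1$), and the standardness of the transformed system actually comes out of the explicit computation (using $\mu>0$ to sort the multiplicities, just as in Lemma \ref{lem:Z3}) rather than from $d/m<16/5$, which is only needed to see that the compound points carry the largest multiplicities before the transformation --- but this is exactly the routine check you deferred, and it goes through.
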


\begin {proof} The proof parallels the one of Lemma \ref {lem:Z3}.

Proceeding as in \eqref {Z2Cremona}, we see that the system
in question corresponds to
$\L(4\alpha-3a-3;(\alpha-a-1)^6,[b-2a-e,b-2a-1]^ 2)$,
whose intersection with the canonical bundle is
$1-2\mu-5a$.

One has $b-2a-e>\alpha-a-1$,
since this is equivalent to $16m-5d-e\geq 0$. 
If $3b-2e-1-6a\leq 4\alpha-3a-3$, which
is equivalent to $17c+9e-27m\geq 2$,  the system is
excellent.  Otherwise perform a quadratic
transformation based at the points of largest multiplicities, i.e.
$b-2a-e, b-2a-e, b-2a-1$. 
The resulting system is 
$\L(25c+13e-39m-3a;(\alpha-a-1)^ 6, 14+7e-22m-a-2,14c+8e-22m-a-3,
14c+7e-22m-2, 5m-3c-a-e-1)$. 
Since $\mu>0$, the largest multiplicities are
$14c+7e-22m-a-2$ and $5m-3c-a-e-1$ and
the system is excellent. \end{proof}

Our next non--speciality result  is as follows.

\begin{proposition}\label{19/6<=ratio}
If $19/6 < d/m < 16/5$ 
then the system $\L(d;m^{10})$
has the expected dimension.
\end{proposition}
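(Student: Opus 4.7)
The plan is to apply Proposition \ref{prop:fine} to the third degeneration just constructed, for a suitable choice of the parameter $a$. First I would fix any non--negative integer $a$ with $a\leq \alpha+1$ and $a<5m-3c-e$; in the range $19/6<d/m<16/5$ one checks immediately that $a=0$ lies in this admissible window, and we may as well work with $a=0$.

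Next I would verify hypothesis (i) of Proposition \ref{prop:fine}, namely non--speciality of the restriction of the limit bundle to each of the five components of the central fibre $X_0$. Non--speciality on $\tilde V_3$, $Z_3$ and $T_3$ is supplied respectively by Lemmas \ref{lem:V3}, \ref{lem:Z3} and \ref{lem:T3}; the planes $U_{1,3}, U_{2,3}$ carry bundles of degree $e\in\{0,1\}$ and are manifestly non--special. Since $V_3$ itself is non--normal, obtained from $\tilde V_3$ by identifying the two disjoint $(-1)$--curves $F_1$ and $F_2$, the vanishing $H^1(V_3,\L_{V_3})=0$ reduces, via Remark \ref{rmk:fu}, to the surjectivity of the map $H^0(\tilde V_3,\L_{\tilde V_3})\to H^0(F_1,\L_{F_1})\oplus H^0(F_2,\L_{F_2})$, and this surjectivity is exactly Lemma \ref{lem:V3F}.

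Then I would verify hypothesis (ii) by adding the components one at a time: $W_1=V_3$, $W_2=W_1+T_3$, $W_3=W_2+Z_3$, $W_4=W_3+U_{1,3}$ and $W_5=W_4+U_{2,3}=X_0$. At each step, by Remark \ref{rem:formal}, it suffices to exhibit the vanishing of one of the two natural $H^1$'s of the bundle twisted by the ideal of the added double curve. The step $W_2\to W_3$ is the delicate one: the added double curve is $E\cup A_1\cup A_2\subset Z_3$, and the required vanishing $H^1(Z_3,\L_{Z_3}(-E-A_1-A_2))=0$ is precisely the content of Lemma \ref{lem:Z3F}. The step $W_1\to W_2$, along $G_1\cup G_2\subset T_3$, is straightforward from the explicit bitangent--pencil description of $\L_{T_3}$ given in Lemma \ref{lem:T3}; and each of the final two attachments of $U_{i,3}\cong\P^2$, meeting the rest of the configuration along two disjoint lines and carrying a bundle of degree $e$, is immediate.

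The heart of the argument is the propagation of non--speciality through the two distinct complications introduced by the degeneration---the non--normality of $V_3$ inherited from the $2$--throw of the cubic in the second degeneration, and the pair of compound multiple points on $Z_3$ and on $T_3$ created by the two $2$--throws of the conics $C_1, C_2$---all within the narrow window $19/6<d/m<16/5$. This is precisely what the preparatory Lemmas \ref{lem:V3F} and \ref{lem:Z3F} accomplish, through Cremona reduction to excellent systems in the spirit of the proofs of Lemmas \ref{lem:V3} and \ref{lem:Z3}. Once they are in place, Proposition \ref{prop:fine} yields $H^1(X_0,\L)=0$, and semi--continuity delivers the expected dimension for $\L(d;m^{10})$ on the general fibre.
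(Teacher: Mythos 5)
Your proposal is correct and follows essentially the same route as the paper: the same ordering of components ($V_3$, then $T_3$, then $Z_3$, then the planes $U_{i,3}$), the same appeal to Proposition \ref{prop:fine} and Remark \ref{rmk:fu}, and the same key inputs, Lemmas \ref{lem:V3}--\ref{lem:T3} for condition (i) and Lemmas \ref{lem:V3F} and \ref{lem:Z3F} for the surjectivity onto the double curves. The only cosmetic differences are that you fix $a=0$ (which is indeed admissible here) where the paper keeps $a$ in the range $0\leq a\leq \alpha+1$, and that you split the attachment of $U_{1,3}$ and $U_{2,3}$ into two steps.
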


\begin{proof} 
The non--speciality of the system on $W_1$ 
follows from Lemma \ref  {lem:V3F} for any $a\geq 0$.

As for the glueing of $T_3$ to create $W_2$,
we note that the double curve is $G=G_1+G_2$,
which is the proper transform on $T_3$ of two lines in the plane $T_2$.
The bundle on $T_3$ restricts to $G$ as the trivial bundle,
and surjectivity follows by remarking that
$G$ is not in the base locus of the linear system on $T_3$ (see the proof of Lemma
\ref {lem:T3}). 

If $0\leq a\leq \alpha+1$, Lemma \ref {lem:Z3} implies that
we have non--speciality on $W_3$. Finally attaching the two planes $U_{1,3}, U_{2,3}$
does not create any speciality, since the bundles there 
have degree $e$, and we have non--speciality on the central fibre $W_4$.
\end{proof}

\section{Generality and transversality}
\label{sect:gentrasv}

In this section we focus on the generality of our choices in the above constructions
in order to prove some transversality properties needed in the sequel.

\subsection {\bf Correspondence generality.} \label {subsec:one}
First, let us go back to the second degeneration. 
Consider the two curves $F_1$ and $F_2$ on $V_1$ which are identified via a projective
transformation $\omega: F_1\to F_2$  in order to get $V_2$. Let $p_i,q_i$ the intersection points of $F_1$ with $E$ and $G_i$, $i=1,2$, respectively. One has $\omega(p_1)=p_2$ and $\omega(q_1)=q_2$. 
Our first claim is that  $\omega$ can be assumed to be general, given this constraint. This gives a one dimensional family of such projective transformations, depending on a parameter varying in $\mathbb C^ *$. 

To see this, go back to the first degeneration. After we choose the six general points on $\F$, we
have the cubic curve $C$ on $\F$, which we will throw in the second degeneration. This curve cuts $E$ in two points $x_1,x_2$, which will be blown up twice on $\P$ in the second degeneration creating 
the exceptional curves $F_1,G_1$ and $F_2,G_2$. 
Consider the projective transformations of $\P$ fixing $x_1$ and $x_2$. They form a group $\Omega$ of dimension
$4$. This group acts also on the double blow up of  $\P$ at $x_1$ and $x_2$, and therefore it acts on all
the curves $F_1,G_1$ and $F_2,G_2$. One sees that $\Omega$ induces on $F_1, F_2$ the full group of pairs of projective transformations fixing $p_1, q_1$ and $p_2,q_2$. We leave the easy proof to the reader.

Note now that we can can act by $\Omega$ on our choices of the remaining four general points on $\P$, that we blow up creating $V_1$. This implies our claim about the generality of $\omega$. 

The same considerations work also for the two curves
$A_1,A_2$  on $Z_3$ in the third degeneration, and the 
projective transformation between them  induced by the 
pencil of conics on the plane $T_2$ bitangent to the pair
of lines coresponding to $G_1$ and $G_2$.

\subsection{\bf Configuration generality.} \label {subsec:two} Next we go back to the third degeneration. 

As we saw in the proof of Lemma \ref {lem:V3}, the system $\L_{\tilde V_3}$ is Cremona equivalent to the system
$\L(2a+\mu;a^3 ,\mu)$. The reader may verify that, under the series of of quadratic transformations
performed in the proof of Lemma \ref {lem:V3}, the two curves $F_1,F_2$ map to two lines $L_1, L_2$ passing through the point of multiplicity $\mu$. This point arises as the contraction of the quartic curve
$\L(4;2,2,2, 1, [1,1]^ 2)$.  
The remaining three base points $y_1,y_2,y_3$ of multiplicity $a$ are general, and the lines joining them correspond to the quartic curves $\L(4;2,2 ,1,2, [1,1]^ 2)$, $\L(4;2,1,2,2, [1,1]^ 2)$, $\L(4;1,2,2,2, [1,1]^ 2)$.  

If we perform a further quadratic transformation based at $y_1,y_2,y_3$, then $L_1,L_2$ are mapped to two conics $\Gamma_1,\Gamma_2$, intersecting at four points
$a_1,\dots,a_4$ which arise as the contractions of all the aforementioned quartic curves.  The above generality considerations, tell us that $a_1,\dots,a_4$ correspond to general points of $F_1,F_2$.

In the resulting final Cremona  transformation, the curve $E$ maps to a line $L$  meeting $\Gamma_i$ at two points $u_i,v_i$, where $u_i$ is the image of the point $p_i$ intersection of $F_i$ with $E$, and $v_i$ is the image of the intersection of $F_i$ with $G_i$. Note that in fact the final Cremona transformation
contracts the two reducible (-1)--cycles on $V_2$ formed by $C_i+G_i$ to points on $L$ which are exactly the points $v_i$, $i=1,2$. 

The configuration formed by $\Gamma=\Gamma_1+\Gamma_2$ and $L$ is completely general, because making the inverse Cremona transformation we arrive at the system  $\L_{\tilde V_3}$, whose base points are general with the only constraint of the two infinitely near ones.

Finally, let us remark that there is a one parameter family $\mathcal F$ of projective transformations of the plane
mapping $u_1$ to $u_2$, $v_1$ to $v_2$ and $\Gamma_1$ to $\Gamma_2$. The induced transformations between $\Gamma_1$ and $\Gamma_2$ correspond to the projective 
transformations between $F_1$ and $F_2$ mapping $p_1$ to $p_2$ and $q_1$ to $q_2$.
Let $\omega\in \mathcal F$ be general. The points $s_i=\omega(a_i)$ lie on $\Gamma_2$ and the points $t_i=\omega^ {-1}(a_i)$ on $\Gamma_1$, $i=1,\dots,4$. On the whole we have eight points $\{s_i, t_i\}_{1\leq i\leq 4}$ forming a divisor  $D_\omega$ on the curve $\Gamma$.

\begin{lemma}\label{lem:ess} In the above setting, for general $\omega\in \mathcal F$, neither $D_\omega$ is cut out on $\Gamma$ by a conic nor $2D_\omega$ is cut out on $\Gamma$ by a quartic.
\end{lemma}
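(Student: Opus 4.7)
The plan is to prove both non-containments via a dimension count verified by a degeneration within the one-parameter family $\mathcal F$.

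I would first parametrize $\mathcal F$ explicitly. In coordinates on $\Gamma_1\cong\Gamma_2\cong\P^1$ with $u_1,u_2\mapsto 0$ and $v_1,v_2\mapsto\infty$, the family consists of the projective transformations $\omega_\lambda:z\mapsto\lambda z$, $\lambda\in\mathbb{C}^*$. Writing $\alpha_i$ and $\beta_i$ for the $\Gamma_1$- and $\Gamma_2$-coordinates of $a_i$, one has $t_i^{(\lambda)}=\beta_i/\lambda$ and $s_i^{(\lambda)}=\lambda\alpha_i$. The space of effective divisors on $\Gamma$ of bi-degree $(4,4)$ is $\mathrm{Sym}^4\Gamma_1\times\mathrm{Sym}^4\Gamma_2\cong\P^4\times\P^4$, of dimension $8$. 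The divisors cut by a conic form the image of $|\mathcal O_{\P^2}(2)|\to\P^4\times\P^4$, which is injective because $h^0(\mathcal O_{\P^2}(-2))=0$; hence they form a subvariety of dimension $5$. Since $\dim\mathcal F=1$ and $1+5<8$, the expected dimension of the locus of $\omega\in\mathcal F$ for which $D_\omega$ is cut by a conic is negative, so generically empty. A parallel count gives the same conclusion for quartics cutting $2D_\omega$, using the $13$-dimensional linear system on $\Gamma$ cut out by quartics in $\P^2$.

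To verify that these bad loci are proper subsets of $\mathcal F$, I would specialize $\lambda\to 0$: then $t_i^{(\lambda)}\to v_1$ and $s_i^{(\lambda)}\to u_2$, so $D_\omega$ degenerates to $D_0=4v_1+4u_2$. A conic $Q$ cutting $D_0$ must be $4$th-order tangent to $\Gamma_1$ at $v_1$ and to $\Gamma_2$ at $u_2$, so $Q=f_1+c\ell_1^2=\mu(f_2+c'\ell_2^2)$ for some $c,c',\mu$, where $f_i$ defines $\Gamma_i$ and $\ell_i$ is the tangent at $v_1$ (resp.\ $u_2$); equivalently $f_1-\mu f_2\in\mathrm{span}(\ell_1^2,\ell_2^2)$, which fails generically since the four quadratic polynomials $f_1,f_2,\ell_1^2,\ell_2^2$ are linearly independent by the configuration generality of Subsection~\ref{subsec:two}. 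A quartic cutting $2D_0=8v_1+8u_2$ has the form $Q=\ell_1^4+f_1\cdot g$ for some conic $g$; equating $Q$ with $c_2\ell_2^4+f_2\cdot g'$, restricting to $\Gamma_2$, and evaluating at $a_1,\dots,a_4$ (where $f_1|_{\Gamma_2}$ vanishes), forces $c_2(\ell_2(a_i))^4=(\ell_1(a_i))^4$ for all $i=1,\dots,4$, which is four equations for the single unknown $c_2$, inconsistent for a generic configuration.

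By semicontinuity these specializations show that the bad loci miss a punctured neighborhood of $\lambda=0$ in $\mathcal F$, hence are proper closed subsets of $\mathcal F$, and the lemma follows. The main obstacle is the verification of the linear independence of $f_1,f_2,\ell_1^2,\ell_2^2$ and the pairwise distinctness of the quantities $\ell_1(a_i)/\ell_2(a_i)$; both reduce to the configuration generality of Subsection~\ref{subsec:two} and may also be checked by direct computation in coordinates.
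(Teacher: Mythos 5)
Your argument is correct in substance but follows a genuinely different specialization from the paper's. Both proofs rest on the same skeleton: the bad locus in $\mathcal F\cong\C^*$ is closed (by properness of the linear system of conics, resp.\ quartics), so a contradiction at a single well-chosen limit suffices. The paper specializes \emph{within} $\mathcal F$, to a $\lambda$ at which one $s_i$ collides with a node $a_j$ of $\Gamma$ while the $t_i$ stay off the nodes; the putative conic then contains five points of $\Gamma_1$, hence equals $\Gamma_1$, and so cannot contain $s_2,s_3,s_4$ --- a two-line Bezout argument, with the quartic case left to the reader. You instead degenerate to the boundary point $\lambda=0$, where $D_\omega$ collapses to $4v_1+4u_2$, and classify the conics (resp.\ quartics) totally tangent to $\Gamma_1$ at $v_1$ and to $\Gamma_2$ at $u_2$: the two pencils $\langle f_1,\ell_1^2\rangle$ and $\langle f_2,\ell_2^2\rangle$ meet only in $0$ once the four forms are independent, and the quartic case reduces to the overdetermined system $c_2\ell_2(a_i)^4=\ell_1(a_i)^4$. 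This treats both assertions uniformly and makes the quartic case explicit, at the cost of the two genericity verifications you flag, which are open conditions checkable on one explicit configuration and hence follow from the generality established in \S\ref{subsec:two}. Two points to tighten. First, your opening ``expected dimension'' count is purely heuristic --- with no transversality it proves nothing, and the specialization is the entire proof; present it as such. Second, in passing to the limit you must rule out that the limiting conic or quartic contains $\Gamma_1$ or $\Gamma_2$ as a component (only the closed condition of vanishing on $D_0$, not the equality of cut-out divisors, survives the limit); this is easily excluded, since the restriction to the other component would still have to vanish on $4u_2$ (resp.\ $8u_2$) while it actually vanishes on $\sum_i a_i$ plus a divisor of too small degree, but it needs saying --- the paper's own specialization shows the danger is real, as there the limit conic \emph{is} forced to be $\Gamma_1$.
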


\begin{proof} As for the first assertion, suppose that for the general $\omega\in \mathcal F$ there is conic $\Gamma_\omega$ cutting out $D_\omega$ on $\Gamma$. For some special $\omega$, one of the $s_i$ points, say  $s_1$, coincides with one of the points $a_1,\dots,a_4$, whereas $s_2,s_3, s_4$ do not lie on $\Gamma_1$. By the generality assumption, for this $\omega$ the points $t_1,\dots,t_4$ stay distinct from $a_1,\dots,a_4$. Then the conic $\Gamma_\omega$ must coincide with $\Gamma_1$, since it has five points in common with it. But then it does not contain 
$s_2,s_3, s_4$ which is a contradiction.
The proof of the second assertion is similar and can be left to the reader. \end{proof}

\subsection{\bf Transversality.}\label{sec:transv}
Still referring to the second, or third, degeneration, consider the surface $V_2$, or $V_3$, which we will denote by $V$ here, and its normalization $\tilde V$. On $\tilde V$ we have the two curves $F_1, F_2$ 
which are glued via the correspondence $\omega$ to form $V$. Consider the linear system $\L_{\tilde V}$, and let $r$ be the dimension of the linear series $\mathcal R_F$ it cuts out on $F=F_1+F_2$, and $d=\L\cdot F_i$, $i=1,2$. We denote by $\mathcal R_{F_i}$ the linear series cut out by $\L_{\tilde V}$ on $F_i$, $i=1,2$, which both have degree $d$ and dimension $r$.
Then we can consider  following two subvareties of ${\rm Sym}^ d(F_1)\times {\rm Sym}^ d(F_2)$:
\begin{itemize}
\item $X$, of dimension $r$, consisting of all pairs $(D_1,D_2)$ such that $D_1+D_2\in \L_F$;
\item $Y$, of dimension  $d$, consisting of all pairs $(D_1,D_2)$ such that $D_2=\omega(D_1)$.
\end{itemize}

One has the following transversality statement:

\begin{lemma}\label {lem:transv} In the above setting, for general choices, $X$ and $Y$ intersect properly inside ${\rm Sym}^ d(F_1)\times {\rm Sym}^ d(F_2)$, i.e. $\dim(X\cup Y)=\max \{r-d, -1\}$,
unless either the point $p_i$, or the point $q_i$ is inflectional for $\mathcal R_{F_i}$, for both $i=1,2$.
\end{lemma}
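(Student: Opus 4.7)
The plan is to recast $X\cap Y$ as a determinantal variety inside $\P(\mathcal R_F)\cong \P^r$ and then exploit the one-parameter family $\mathcal F$ of correspondences from Subsection \ref{subsec:one} to force transversality for a generic $\omega$.

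First I would fix a trivialization making $\omega^{*}\L_{F_2}\cong \L_{F_1}$, which is legitimate since both are degree-$d$ line bundles on rational curves. Each section $s\in H^0(\mathcal R_F)$ then determines a pair $(s|_{F_1},\omega^{*}s|_{F_2})\in H^0(F_1,\L_{F_1})^{\oplus 2}$, and a pair $(D_1,\omega(D_1))\in X\cap Y$ is exactly one for which these two sections are proportional. In this way $X\cap Y$ is identified with the projectivized rank-at-most-one locus of the $2\times(d+1)$ matrix whose rows are the coordinates of $s|_{F_1}$ and $\omega^{*}s|_{F_2}$; this is a determinantal subvariety of $\P^r$ of expected codimension $d$ (the standard codimension of rank $\leq 1$ inside $2\times(d+1)$ matrices), hence of expected dimension $\max\{r-d,-1\}$.

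Next I would form the total incidence
$$
\mathcal I:=\{(s,\omega)\in \P(\mathcal R_F)\times \mathcal F : s|_{F_1}\wedge \omega^{*}s|_{F_2}=0\}
$$
and play off its two projections. The projection to $\mathcal F$ has fibers $X\cap Y_\omega$, while the projection to $\P(\mathcal R_F)$ has fiber over $[s]$ equal to the set of $\omega\in \mathcal F$ carrying the divisor of $s|_{F_1}$ to the divisor of $s|_{F_2}$; since $\mathcal F$ is one-dimensional, such a fiber is either finite or all of $\mathcal F$. A standard dimension count then reduces the lemma to ruling out an excess component $\Sigma\subset X\cap Y_\omega$ of dimension $>r-d$ for a general $\omega\in\mathcal F$.

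The main obstacle will be to characterize what such an excess component forces. Writing the tangent space to the determinantal locus at $[s]\in \Sigma$ as the kernel of the natural linear map $\mathcal R_F\to H^0(F_1,\L_{F_1})/\langle s|_{F_1}\rangle$ sending $t\mapsto t|_{F_1}-c_s\,\omega^{*}t|_{F_2}$ (where $c_s$ is the proportionality constant at $s$), excess dimension means that this map has rank strictly smaller than $d$, i.e., its image lies in a proper hyperplane. Differentiating along a tangent direction to $\mathcal F$ at $\omega$, the infinitesimal variation $\delta\omega$ is an infinitesimal automorphism that vanishes at the four fixed points $p_1,p_2,q_1,q_2$ of the family, so the linearized rank defect becomes a condition of higher-order contact of $s|_{F_1}$ with $\omega^{*}s|_{F_2}$ precisely at those fixed points. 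Propagating this through the positive-dimensional $\Sigma$ (whose restrictions $s|_{F_i}$ sweep out positive-dimensional subfamilies of $\mathcal R_{F_i}$, since $\mathcal R_F\to\mathcal R_{F_i}$ is an isomorphism by the dimension equality in the hypothesis), one concludes that the only way the excess can persist for a general $\omega\in\mathcal F$ is that $\mathcal R_{F_i}$ has an inflectional point at $p_i$ or at $q_i$, for both $i=1,2$. This is exactly the exceptional configuration allowed by the statement, and in its absence $X$ and $Y$ meet properly.
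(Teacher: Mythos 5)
Your determinantal reformulation is a reasonable framework, and it has one nice feature worth keeping: realizing $X\cap Y$ as the rank--one locus of a $2\times(d+1)$ matrix varying linearly over $\P(\mathcal R_F)$ shows that $r-d$ is automatically a \emph{lower} bound for the dimension of every nonempty component, so only the upper bound is in question. Note, though, that the paper does not prove this lemma at all; it invokes Proposition 3.1 of \cite{CM98}, where the transversality is obtained by a different route (degenerating $\omega$ inside the one--parameter family of projectivities fixing the marked points and arguing by semicontinuity on the limit of $Y$, which is where the divisors are forced to concentrate at $p_i$ and $q_i$). So your infinitesimal approach is genuinely different in flavor from the argument being cited.

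The problem is that your proof has a gap exactly at the step that carries the entire content of the lemma. You assert that ``the linearized rank defect becomes a condition of higher-order contact of $s|_{F_1}$ with $\omega^{*}s|_{F_2}$ precisely at those fixed points,'' and then that ``propagating this through the positive-dimensional $\Sigma$ \dots one concludes'' the inflectional condition at $p_i$ or $q_i$ for both $i$. Neither claim is derived. To make this work you would have to: (a) rule out the possibility that the excess component of $X\cap Y_\omega$ does not move with $\omega$ at all (in which case differentiating along $\mathcal F$ gives you no information, and the finite-or-all-of-$\mathcal F$ dichotomy for the fibers of $\mathcal I\to\P(\mathcal R_F)$ does not by itself exclude a non-dominant excess component sitting over a single general $\omega$); (b) actually compute the derivative of the proportionality $s|_{F_1}=c_s\,\omega^{*}s|_{F_2}$ along $\mathcal F$, keeping track of the dependence of $c_s$ on $\omega$, and show that the vanishing of the infinitesimal automorphism $\delta\omega$ at $p_2,q_2$ produces a section of $\mathcal R_{F_1}$ with higher-than-expected vanishing at $p_1$ or $q_1$; and (c) explain why the conclusion is the ramification (inflection) of $\mathcal R_{F_i}$ at exactly those points and \emph{for both} $i=1,2$, rather than some weaker degeneracy. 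As written, the exceptional clause of the lemma is read off from its statement rather than produced by the argument, so the proposal is a plausible strategy sketch rather than a proof.
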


\begin{proof} The proof is the same as the one of Proposition 3.1 of  \cite {CM98} and therefore we do not dwell on it here. \end{proof}

A similar lemma holds for $A_1$ and $A_2$ on $Z$.

\section{The fourth degeneration: throwing the quartics}
\label{sec:4}

Now we want to analyze the situation when the ratio $d/m$
is at most $19/6$.
We perform the same $2$-throws, up through the third
degeneration, 
and then make our fourth degeneration by
throwing four curves in $V_3$ corresponding to certain quartics.

In what follows we assume that $174/55 \leq d/m \leq 19/6$.
We will use the same notation as above. In particular $\alpha=d-3m$.
We will additionally set 

$$\ell = -\mu=19m-6d$$
which is non--negative and 
$$\ell = 2r-s, \quad {\rm with}\quad  s \in \{0,1\}.$$

Note that 

$$d=3\ell+19\alpha,\quad m=\ell+6\alpha$$
and  $d/m\geq 174/55$ is equivalent to

$$\alpha\geq 9\ell.$$

Recall that in the third degeneration we have five surfaces:
\begin{itemize}
\item $V_3$, with linear system of the form
$\L(9a-2\ell;(4a-\ell)^4,[2a,2a]^2)$;
\item $Z_3$, with linear system of the form
$\L(10\alpha-6a;6\alpha-3a,(3\alpha-2a)^6, [b-2a,b-2a-e]^2)$;
\item $T_3$, with linear system of the form
$\L(10m-3d-2a; [b-2a,b-2a-e]^2)$;
\item $U_{1,3}$ and $U_{2,3}$, planes, with linear systems of degree $e$.
\end{itemize}

We now note that each of the four disjoint curves on $V_3$ corresponding to the quartics
\[
\L(4; 2, 2, 2, 1, [1,1]^2),\quad 
\L(4; 2, 2, 1, 2, [1,1]^ 2),\quad
\L(4; 2, 1, 2, 2, [1,1]^ 2),\quad 
\L(4; 1, 2, 2, 2, [1,1]^ 2)
\]
we already met in \S \ref {subsec:two} has intersection number
$-\ell$ with the system on $V_3$.
Each meets the double curve in two points,
along $F_1$ and $F_2$,
which the reader will recall are identified
(giving the self--double curve of $V_3$)
in the second degeneration.

We perform a $2$-throw for each of them,
resulting in our fourth degeneration.
This will consist of nine surfaces:
\begin{itemize}
\item $V_4$, the transform of $V_3$.
The normalization $\tilde{V_4}$ of $V_4$
is obtained from $\tilde{V_3}$, with an additional eight double blow--ups,
four each on the curves $F_i$,
corresponding to eight $[r,r-s]$-points, and by blowing down the 
curves corresponding to the four quartics.The four 2--throws
results in removing each quartic $\ell$ times from the bundle on
$V_4$, which therefore corresponds to a linear system of
 the form
$\L(9a-18\ell, (4a-8\ell)^4, [2a-4\ell,2a-4\ell]^2, [r,r-s]^8)$;
\item $Z_4$, $T_4$, $U_{1,4}$, and $U_{2,4}$,
unchanged from the corresponding surfaces in the third degeneration,
with the same bundles;
\item four new planes $Y_{i}$, $i=1,\ldots,4$, with bundles
of degree $s$ on them; these planes arise from the four $2$-throws.
\end{itemize}

The picture of the central fibre of the fourth degeneration is shown in
Figure \ref {fig:7}.

\begin{figure}
\begin{center}
\includegraphics [scale=0.70, width=15cm] {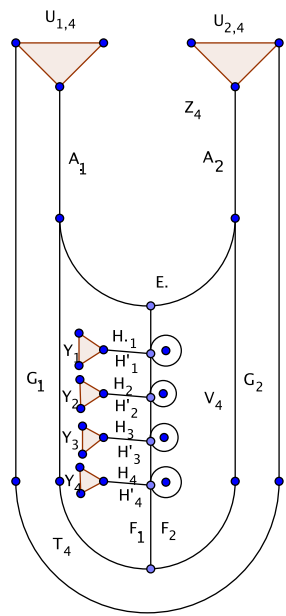}
\end{center}
\caption{the central fibre of the fourth degeneration}\label{fig:7}
\end{figure}

Note that, in order for the system on $V_4$ to be non--empty
it is necessary that $a \geq 2\ell$.

\begin{lemma} \label {lem:V4a}
If $a >  4\ell+1$ then the system on $V_4$ is non--empty and non--special.
If $\ell\leq 2$ the same holds if $a \geq 4\ell+1$.
\end{lemma}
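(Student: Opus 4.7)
The plan mirrors the strategy of Lemma \ref{lem:V3}: apply a chain of quadratic transformations to reduce the system to a simpler form, then analyze the resulting system using the generality inputs from Section \ref{sect:gentrasv}.

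First I would perform four Cremona transformations analogous to those in the proof of Lemma \ref{lem:V3}: at three of the four $(4a-8\ell)$-points, then at the remaining $(4a-8\ell)$-point together with the first multiplicities of the two $[2a-4\ell,2a-4\ell]$-points, then at the three $(2a-4\ell)$-points that result, and finally at the three persisting $(a-2\ell)$-points. The standardness inequalities needed for each Cremona are all implied by $a>2\ell$, which follows from $a\geq 4\ell+1$. These Cremonas do not take the eight $[r,r-s]$-points as centres, but they do carry them geometrically, and by the same analysis underlying \S\ref{subsec:two} they end up lying on two conics $\Gamma_1\cup\Gamma_2$. The reduced system should be
\[
\L(a-2\ell;\,[r,r-s]^8),
\]
with the eight compound points on $\Gamma_1\cup\Gamma_2$, four on each.

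Next I would verify non-emptiness by a direct virtual dimension count. Using $\ell=2r-s$ and the cost $\binom{r+1}{2}+\binom{r-s+1}{2}$ per $[r,r-s]$-point, setting $a=4\ell+1$ yields $v=\ell+2$ for $\ell$ even and $v=\ell$ for $\ell$ odd, both non-negative; $v$ grows quadratically as $a$ increases.

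The main step, which I anticipate to be the principal obstacle, is non-speciality of the reduced system. The eight compound base points lie in a genuinely special configuration on $\Gamma_1\cup\Gamma_2$, so generic-position tools do not apply directly. My approach is to enumerate the candidate fixed components, namely the conics $\Gamma_1,\Gamma_2$ and the line $L$ from \S\ref{subsec:two}, and to check that for $a\geq 4\ell+1$ each has non-negative intersection with $\L(a-2\ell;\,[r,r-s]^8)$, so that no forced splitting occurs. Non-speciality should then follow from Lemma \ref{lem:ess}, which guarantees that $D_\omega$ is cut out on $\Gamma_1\cup\Gamma_2$ neither by a conic nor doubly by a quartic, and these are precisely the genericity conditions ruling out the remaining obvious sources of speciality. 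A final appeal to Proposition \ref{prop:strongantiK}(iii) applied to the residual system would complete the argument. The relaxed hypothesis $a\geq 4\ell+1$ in the low-$\ell$ cases $\ell\leq 2$ is accommodated because the compound structure $[r,r-s]$ degenerates to something very simple there (empty for $\ell=0$, simple $[1,0]$-points for $\ell=1$, and $[1,1]$-points for $\ell=2$), so direct verification at the boundary $a=4\ell+1$ is feasible.
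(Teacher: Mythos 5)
Your Cremona reduction to $\L(a-2\ell;[r,r-s]^8)$ with the eight compound points lying four apiece on the two conics $\Gamma_1,\Gamma_2$ matches the paper exactly, and you are right that Lemma \ref{lem:ess} is the generality input. But the core of the proof --- establishing $h^1=0$ for this reduced system --- is missing, and the route you propose does not work. First, Proposition \ref{prop:strongantiK}(iii) is inapplicable: the reduced system lives on a blow-up at sixteen points (eight points each with an infinitely near point), and there is no anticanonical cubic through them, since a cubic meeting a conic in four compound points (eight intersection conditions) would have to contain it, hence contain both $\Gamma_1$ and $\Gamma_2$, which together have degree $4>3$. So you are not in the (strong) anticanonical case at all. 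Second, even granting some substitute, verifying that $\Gamma_1,\Gamma_2,L$ have non-negative intersection with the system rules out only the most obvious fixed components; it does not establish non-speciality for a system based at sixteen points in special position, and Lemma \ref{lem:ess} by itself is not "the genericity condition ruling out the remaining sources of speciality" --- it enters at one precise spot, described below.

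The paper's actual mechanism is an induction that peels off the curve $\Gamma=\Gamma_1+\Gamma_2$, which is $1$-connected of arithmetic genus $3$ and lies in the class $\L(4;[1,1]^8)$. One computes
\[
\L(a-2\ell-4i;[r-i,r-s-i]^8)\cdot\Gamma \;=\; 4(a-4\ell),
\]
independently of $i$. If $a>4\ell+1$ this degree exceeds $2p_a(\Gamma)-2=4$, so every restricted bundle $\L(\cdots)_{\vert\Gamma}$ has vanishing $H^1$; the exact sequences
\[
0\to\L(a-2\ell-4(i+1);[r-i-1,r-s-i-1]^8)\to\L(a-2\ell-4i;[r-i,r-s-i]^8)\to\L(\cdots)_{\vert\Gamma}\to 0
\]
then transport non-emptiness and non-speciality up from the base case $i=r-2$, where the system $\L(a-4\ell-2s+4;[1,1-s]^8)$ is visibly fine. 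This is exactly where the threshold $a>4\ell+1$ comes from. At the boundary $a=4\ell+1$ the restriction to $\Gamma$ has degree exactly $4=2p_a(\Gamma)-2$, i.e.\ the bicanonical degree, and this is the precise point where Lemma \ref{lem:ess} is used: since the degree-$8$ divisor $D_\omega$ is not bicanonical on $\Gamma$, the restricted bundle still has no $H^1$, which rescues the cases $\ell\le 2$ (where $r=1$ and the induction has a single step). Your remark that the low-$\ell$ boundary cases are "simple enough to verify directly" gestures at this but does not supply the argument. Your virtual-dimension count for non-emptiness is harmless but also unnecessary once the induction is in place.
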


\begin{proof}
For the system on $V_4$,
we have the following series of quadratic transformations which do not involve the
eight $[r,r-s]$--points:
{
\footnotesize

\[
\begin{matrix}
9(a-2\ell);& \underline{4(a-2\ell)},&\underline {4(a-2\ell)},
& \underline{4(a-2\ell)}, & 4(a-2\ell),
& [2(a-2\ell), 2(a-2\ell)],
&[ 2(a-2\ell), 2(a-2\ell)]\cr
6(a-2\ell);&a-2\ell,&a-2\ell,
& a-2\ell, & \underline {4(a-2\ell)},
& [\underline {2(a-2\ell)}, 2(a-2\ell)],
& [\underline {2(a-2\ell)}, 2(a-2\ell)] \cr
4(a-2\ell); & a-2\ell, & a-2\ell,
& a-2\ell, & \;\underline {2(a-2\ell)},
& \underline {2(a-2\ell)}\;, & \underline {2(a-2\ell)} \cr
2(a-2\ell); & \underline{a-2\ell}, & \underline{a-2\ell},
& \underline{a-2\ell} & & & \cr
a-2\ell & & & & & & \cr
\end{matrix}
\]
}
Hence the system on $V_4$ Cremona reduces to 
$\L(a-2\ell;[r,r-s]^8)$. We will therefore 
prove that this system is non--empty and non--special, 
by working on the plane blown up at the 
eight compound points rather than on $\tilde {V_4}$.

As we saw in \S \ref {subsec:two}, 
under the previous series of quadratic transformations,
the two curves $F_i$ map to two conics $\Gamma_i$, $i=1,2$.
Each of them contains four of the $[r,r-s]$-points, i.e. the points 
$t_i\in \Gamma_1$ and $s_i\in \Gamma_2$, $i=1,\dots,4$.
The divisor $D$ of degree 8 on  $\Gamma$ consisting of all
these points is not in the bicanonical series of $\Gamma$
by Lemma \ref {lem:ess}.

The curve $\Gamma = \Gamma_1 + \Gamma_2$
is $1$-connected, with arithmetic genus three.
Restricting to $\Gamma$,
we have for each $i=0,\ldots,r-1$ the exact sequences
\begin{align*}
0 \to \L(a-2\ell-4(i+1); & [r-i-1,r-s-i-1]^ 8)
 \to \L(a-2\ell-4i;[r-i,r-s-i]^ 8) \\
& \to  \L(a-2\ell-4i;[r-i,r-s-i]^ 8)_{\vert \Gamma} \to 0
\end{align*}
where, by abusing notation, we use the same symbol
to denote a linear system and the corresponding line bundle.

One has
$$\L(a-2\ell-4i;[r-i,r-s-i]^ 8) \cdot \Gamma =
4(a-4\ell) > 4$$
if $a>4\ell+1$, in which case
the restrictions of all the above line bundles
to $\Gamma$ have no $H^ 1$. 
Therefore,  the middle linear system
 is non--empty and non--special if the system on the left 
 is non--empty and non--special.
Since the system with $i=r-2$ is
$\L(a-4\ell-2s+4; [1,1-s]^8)$,
which is non--empty and non--special, we conclude by induction. 

Let now $a=4\ell+1$. If $\ell=0$ the system is clearly non--special.
If $1\leq \ell\leq 2$, then $r=1$ and $i=0$. Then
$\L(2\ell+1;[r,r-s]^ 8)_{\vert \Gamma}$ has degree 4.  However, by Lemma \ref {lem:ess}, 
$\L(2\ell+1;[r,r-s]^ 8)_{\vert \Gamma}$ is still not special and the same argument as
above can be applied to conclude.\end{proof}

Next we deal with the system on $Z_4$,
of the form 
$\L(10\alpha-6a;6\alpha-3a,(3\alpha-2a)^6, [b-2a,b-2a-e]^2)$,
which is the same as  the one on $Z_3$.
We saw in (\ref{Z2Cremona}) 
that this system is Cremona equivalent to
$\L(4\alpha-3a; (\alpha-a)^6, [b-2a,b-2a-e]^2)$ and
the ten multiple points
all lie on the irreducible nodal cubic curve $E$.
Hence we are in the strong anticanonical case.

We first note that $4\ell < \alpha - 2\ell\leq  (69d-218m)/2$.
The first inequality is equivalent to $d/m > 117/37$ and we have
$174/55 > 117/37$. The second inequality is equivalent to
$d/m\geq 136/43$ and we have also $174/55 > 136/43$. 

\begin{lemma}\label{lem:Z4}
If $174/55 \leq d/m \leq 19/6$, and $4\ell < a \leq \alpha-2\ell$,
then the linear system on $Z_4$ is non--empty and non--special.
\end{lemma}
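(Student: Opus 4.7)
My plan is to follow the strategy used for $\L_{Z_3}$ in Lemma \ref{lem:Z3}, adapted to the new regime of smaller ratios $d/m\leq 19/6$. First apply the series of quadratic transformations \eqref{Z2Cremona} to reduce $\L_{Z_4}$ to the Cremona-equivalent form $\L(4\alpha-3a;(\alpha-a)^{6},[b-2a,b-2a-e]^{2})$, noting that the ten multiple points lie on the image of the irreducible nodal cubic $E$, placing us in the strong anticanonical case. The anticanonical intersection is $\L_{Z_4}\cdot K_{Z_4}=2\ell-a$, which is strictly negative because $a>4\ell\geq 2\ell$; by Proposition \ref{prop:strongantiK}(iii) it thus suffices to further Cremona reduce to a standard system, since excellence (being Cremona-invariant) will be automatic.

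Since $d/m\leq 19/6<16/5$, the inequality $b-2a>\alpha-a$ still holds, so one performs first the same quadratic transformation as in Lemma \ref{lem:Z3}, based at the three largest multiplicities $b-2a,b-2a,b-2a-e$ coming from the two compound points. This produces
\[
\L(25c+12e-39m-3a;(\alpha-a)^{6},\; 7d-22m-a,\; 7d-22m-a-e,\; 7d-22m-a,\; 5m-3c-a-2e).
\]
In the current range, however, $\mu=6d-19m\leq 0$ so that $7d-22m-a=\alpha-a-\ell\leq \alpha-a$, while $5m-3c-a-2e>\alpha-a$ (since $d/m<16/5$). The three largest multiplicities are now $5m-3c-a-2e$ together with two of the $(\alpha-a)$ points, and, unlike in Lemma \ref{lem:Z3}, this intermediate system fails to be standard (standardness here would require $d/m\geq 19/6$).

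I would then perform a second quadratic transformation based at these new three largest multiplicities. A direct computation gives the three new multiplicities as $(21d-e)/2-33m-a$ from the $5m-3c-a-2e$ point and $13d-41m-a=\alpha-2\ell-a$ (twice) from the two $(\alpha-a)$ points; the hypothesis $a\leq \alpha-2\ell$ is precisely what makes the latter two non-negative, and the inequality $174/55>60/19$ (equivalent to $19d>60m$ along the ratio $174/55$) ensures that the first stays strictly positive. A further chain of at most a few quadratic transformations, each based at the running three largest multiplicities, will then terminate in a standard system; the arithmetic heart of the argument is that $174/55>136/43$ (check: $174\cdot 43=7482>7480=136\cdot 55$), which at the end guarantees that the sum of the three largest multiplicities does not exceed the degree. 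Since $\L\cdot K=2\ell-a\leq -1$ is Cremona invariant, the terminal standard system is excellent, and Proposition \ref{prop:strongantiK}(iii) yields non-speciality and non-emptiness of $\L_{Z_4}$.

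The main obstacle is the arithmetic bookkeeping through this chain of Cremona transformations: verifying at each step that no intermediate multiplicity is forced below zero (the bounds $4\ell<a\leq \alpha-2\ell$ together with $\alpha\geq 9\ell$ are sharp here) and confirming that the chain terminates in a standard system (where the lower bound $d/m\geq 174/55$ enters in the tightest possible way). As in the final paragraph of the proof of Lemma \ref{lem:Z3}, any boundary case at $d/m=174/55$ where the terminal system turns out to be only almost excellent will be handled by restricting to the anticanonical elliptic curve, which by the generality of the base points carries a non-trivial degree-zero line bundle with vanishing $H^{1}$, so that the kernel system is excellent and the usual restriction sequence gives non-speciality of the original bundle.
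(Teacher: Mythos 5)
Your overall strategy is the paper's: Cremona-reduce $\L_{Z_4}$ by repeated quadratic transformations based at the three largest multiplicities until the system becomes standard, then invoke excellence via $\L_{Z_4}\cdot K_{Z_4}=2\ell-a<0$ and Proposition \ref{prop:strongantiK}(iii). Your first two transformations are correctly computed and agree with the paper's (after translating notation: $8\alpha-3b+e=25c+12e-39m-3a$, $4\alpha+a-2b+e=7d-22m-a$, etc.), and your observation that the system is not yet standard after one step in this range is right. Incidentally, your closing paragraph about an ``almost excellent'' boundary case is superfluous: $\L\cdot K=2\ell-a\leq -1$ strictly throughout the stated range of $a$, so the terminal standard system is automatically excellent.

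The gap is that you stop exactly where the lemma's content begins. The paper carries out two further quadratic transformations (four in all), tracking every multiplicity explicitly, and arrives at
$\L(26\alpha-15b+12a+7e;(7\alpha+3a-4b+2e)^{6},(4\alpha+a-2b+e)^{2},4\alpha+a-2b,18\alpha-11b+10a+5e)$;
rewriting the entries as $\alpha-2\ell-a$, $\alpha-\ell-a$ and $c-a-m-2\alpha-6\ell$, the standardness of this final system is \emph{equivalent} to $c-m\geq 3\alpha+5\ell$, i.e.\ to $55d\geq 174m+e$ --- the paper explicitly remarks that for $d/m<174/55$ the final system is \emph{not} standard. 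So the termination you assert cannot be deduced from any weaker inequality, and in particular not from $174/55>136/43$: that inequality only guarantees $\alpha-2\ell\leq(69d-218m)/2$, which is what makes the single multiplicity $18\alpha-11b+10a+5e$ non-negative for $a$ in the allowed range. Your claim that ``a few more transformations will terminate in a standard system'' with $136/43$ as the arithmetic heart therefore misses the actual mechanism by which the bound $174/55$ enters the main theorem; without the explicit final reduction and the verification that the sum of the three largest multiplicities equals the degree precisely when $d/m\geq 174/55$, the proof is not complete. (A smaller inaccuracy: you invoke $d/m>60/19$ for positivity of $(21d-e)/2-33m-a$, but that quantity is decreasing in $a$ and its positivity at $a=\alpha-2\ell$ reduces to $5d+e<16m$; the role of $19d>60m+e$ is rather to order $6\alpha+2a-3b+e$ above $\alpha-a$ at the next step.)
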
  

\begin{proof}
One has
\[
\L_{Z_4} \cdot K_{Z_4} = 
2\ell-a < 0.
\]
Next we note that $b-2a > \alpha - a$,
since this is equivalent to
$d/m < 16/5$.
Hence the three largest multiplicities of the  system on $Z_4$ are
$b-2a$, $b-2a$, and $b-2a-e$.
Comparing their sum $3b-6a-e$ with the degree $4\alpha-3a$, we see that
$3b-6a-e > 4\alpha-3a$, since this is equivalent to
$d/m < 54/17$, and $19/6 < 54/17$.
Hence the system is not standard, and we may apply a quadratic transformation
based at these three points.
This results in the system
  \[
\L(8\alpha-3b+e; (\alpha-a)^6, b-2a-e, (4\alpha+a-2b+e)^2, 4\alpha+a-2b)
\]
where we have suppressed the infinitely near nature of the points in the notation.
Now we have that
$\alpha-a \geq 4\alpha+a-2b+e$
since this is equivalent to
 $d/m \leq 19/6$.
So in this system, the three largest multiplicities are
$b-2a-e$, $\alpha-a$, and $\alpha -a$.
Their sum is at least the degree, since
$2\alpha + b - 4a-e \geq 8\alpha-3b+e$
is equivalent to
$d/m \leq 19/6$.
Perform another quadratic transformation centered at these three points;
we obtain the system
\[
\L(14 \alpha -7b + 4a+3e;
(\alpha-a)^4, 6\alpha +2a-3b+e, (7\alpha + 3a -4b + 2e)^2,
(4\alpha+a-2b+e)^2, 4\alpha+a-2b).
\]
  
  The three multiplicities
$\alpha-a$, $\alpha-a$, and $6\alpha +2a-3b+e$
have sum at least the degree
since $d/m \leq 19/6$;
another quadratic transformation
results in the system
 \[
\L(20\alpha -11b + 8a + 5e;
(\alpha-a)^2, 
12 \alpha -7b + 6a+3e,
(7\alpha + 3a -4b + 2e)^4,
(4\alpha+a-2b+e)^2, 4\alpha+a-2b
).
\]
Again the three multiplicities
$\alpha-a$, $\alpha-a$, and $12 \alpha -7b + 6a+3e$
have sum at least the degree
 since $d/m \leq 19/6$;
one further quadratic transformation
gives us the system
\[
\L(26\alpha -15b +12a+7e;
(7\alpha + 3a -4b + 2e)^6,
(4\alpha+a-2b+e)^2, 4\alpha+a-2b,
18\alpha - 11b + 10a+5e
).
\]
The multiplicity $18\alpha - 11b + 10a+5e$
is non--negative, since this is equivalent to
$2a \leq 69d-218m$.

Note that
\begin{align*}
26\alpha -15b +12a+7e =
48d-153m-3a+c &= c-m-3a-8\ell \\
18\alpha - 11b + 10a+5e =
34d - 109m - a + c &= c -a- m -2\alpha-6\ell \\
7\alpha + 3a -4b + 2e =
13d-41m -a &= \alpha-2\ell-a \\
4\alpha + a - 2b + e = 
7d-22m+a &= \alpha-\ell-a,
\end{align*}
We now observe that if $d/m \geq 174/55$, then
$c -a- m -2\alpha-6\ell \geq \alpha-\ell-a$
and the three largest multiplicities here are
\[
18\alpha - 11b + 10a+5e, 4\alpha + a - 2b + e, 4\alpha + a - 2b + e;
\]
since their sum is
$c -a- m -2\alpha-6\ell + 2(\alpha-\ell-a)
= c-m-3a-8\ell$
which is the degree of the system, the system is standard,
and excellent,
and we are done.
\end{proof}

Note that, in the above proof, if $d/m < 174/55$, then the system is not standard.

The system on $T_4$ is the same as that on $T_3$,
so we have the same criterion as in Lemma, i.e. we have
the system non--empty and non--special if $10m-3d \geq 2a$. 

\begin{lemma}\label{lem:T4}
If $d/m \leq 19/6$ and $a \leq \alpha - 2\ell$
then the system on $T_4$ is non--empty and non--special.
\end{lemma}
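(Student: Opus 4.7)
The plan is to observe that the linear system on $T_4$ coincides with the one on $T_3$ that was analyzed in Lemma~\ref{lem:T3}, so the task reduces to checking that the hypothesis of that lemma is implied by our new numerical assumptions. Concretely, the bundle in question has the form $\L(2b-4a-e;[b-2a,b-2a-e]^2)$ of degree $10m-3d-2a$ (after substituting $b=5m+a-3c-e$ and $d=2c+e$), and Lemma~\ref{lem:T3} guarantees non--emptiness and non--speciality precisely when this degree is non--negative, i.e.\ when $10m-3d\geq 2a$.

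First I would translate the bound $a\leq \alpha-2\ell$ into an inequality in $d$ and $m$ alone. Using $\alpha=d-3m$ and $\ell=19m-6d$, one computes
\[
\alpha-2\ell \;=\; (d-3m)-2(19m-6d) \;=\; 13d-41m,
\]
so the hypothesis gives $2a\leq 26d-82m$.

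Second, I would verify that $26d-82m\leq 10m-3d$ under $d/m\leq 19/6$. This inequality is equivalent to $29d\leq 92m$, i.e.\ $d/m\leq 92/29$, and the arithmetic check $19\cdot 29=551<552=6\cdot 92$ shows $19/6<92/29$; hence our hypothesis is stronger than what is needed. Chaining the two bounds yields $2a\leq 10m-3d$, and Lemma~\ref{lem:T3} applies verbatim to finish.

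There is no real obstacle here: the geometric content, namely the identification of the system with $b-2a-e$ conics from a bitangent pencil (plus a fixed line when $e=1$), was already established for $T_3$. The only work is the two--step numerical verification above, and the only arithmetic fact one must keep in mind is the comparison $19/6<92/29$, which controls why the range $d/m\leq 19/6$ is exactly compatible with the upper bound $a\leq \alpha-2\ell$.
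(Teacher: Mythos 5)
Your proposal is correct and follows essentially the same route as the paper: reduce to the non-negativity of the degree $10m-3d-2a$ required by the $T_3$ analysis, bound $2a\leq 2\alpha-4\ell=26d-82m$, and check that this is at most $10m-3d$ precisely when $d/m\leq 92/29$, which holds since $19/6<92/29$. The only difference is that you spell out the intermediate arithmetic slightly more explicitly than the paper does.
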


\begin{proof}
We must show that the hypothesis implies $2a \leq 10m-3d$,
which will be the case if  $2\alpha -4\ell \leq 10m-3d$.
This is equivalent to $d/m \leq 92/29$,
which is true, since $19/6 < 92/29$.
\end{proof}

We deal now with checking the surjectivity criteria 
from Proposition \ref {prop:fine}. In this case we set

$$W_1=V_4, W_2=W_1+Z_4, W_3=W_2+T_4, W_4=W_3+U_{1,4}+U_{2,4}, W_5=X_0.$$

Since $W_1$ is again non--normal,
we have to deal with the self--double curves.
We abuse notation,
and still denote by $F_i$, $i=1,2$, and $E$
the proper transform on $\tilde V_4$ of these curves on $\tilde V_3$.
Let $H_1,\ldots, H_4$ be the second exceptional divisors
of four of the second sets of eight compound singularities,
the ones that meet $F_1$.
Let $D=F_1+H_1+\ldots+H_4$.
With this notation,
$V_4$ is obtained by suitably gluing $D$ with its counterpart $D'$
formed by the proper transform of $F_2$
and the four curves $H'_1,\ldots,H'_4$
analogous to $H_1,\ldots, H_4$.

\begin{lemma}\label {lem:V4F}  Let $174/55 \leq  d/m \leq 19/6$. Then:
\begin{itemize}
\item [(i)]  if $a>4\ell+3$,
then  $H^1(\tilde V_4,\L_{\tilde V_4}(-D-D'))=0$. If $\ell=2$
the same conclusion holds if $a\geq 11$.
If $0\leq \ell\leq 1$  the same conclusion holds for all $a$;
\item [(ii)] if  $a>4\ell +4$,
then $H^1(\tilde V_4,\L_{\tilde V_4}(-D-D'-E))=0$.
 If $\ell=2$
the same conclusion holds if $a\geq 12$.
If  $0\leq \ell\leq 1$  the same conclusion holds for all $a$.
\end{itemize}
\end{lemma}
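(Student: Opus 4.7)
The plan is to imitate the proof of Lemma \ref{lem:V4a} while tracking how the additional base divisors $D$, $D'$, and $E$ behave under the Cremona reduction used there. First I would apply the same series of quadratic transformations, reducing $\L_{\tilde V_4}$ to $\L(a-2\ell;[r,r-s]^8)$ on the blown-up plane, so that $F_1,F_2$ become the two conics $\Gamma_1,\Gamma_2$ of \S\ref{subsec:two} (each containing four of the eight compound points), the $H_i,H'_i$ remain the second exceptional divisors at those compound points, and in part (ii) the curve $E$ becomes the line $L$ meeting $\Gamma$ transversally at $u_1,v_1,u_2,v_2$. A divisor-class calculation shows that subtracting the second exceptional at a $[r,r-s]$ compound point raises its infinitely near multiplicity by one; combined with the effect of subtracting $\Gamma_1+\Gamma_2$, this yields that $\L_{\tilde V_4}(-D-D')$ corresponds on the plane to $\L(a-2\ell-4;\,[r-1,r-s]^8)$ and, for part (ii), $\L_{\tilde V_4}(-D-D'-E)$ corresponds to $\L(a-2\ell-5;\,[r-1,r-s]^8)$.

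Next I would iterate the restriction exact sequence to $\Gamma=\Gamma_1+\Gamma_2$,
\[
0\to \L_{i+1}\to \L_i\to \L_i|_\Gamma\to 0,
\]
where $\L_{i+1}$ is obtained from $\L_i$ by subtracting $\Gamma$ (so the degree drops by $4$ and each compound multiplicity drops by $(1,1)$). A direct intersection count shows that $\L_i\cdot\Gamma$ is independent of $i$, equal to $4(a-4\ell-2)$ in part (i) and $4(a-4\ell-3)$ in part (ii). Since $\Gamma$ is $1$-connected of arithmetic genus $3$, the restricted bundle $\L_i|_\Gamma$ has vanishing $H^1$ as soon as its degree strictly exceeds $4=2p_a(\Gamma)-2$, which translates exactly into the hypotheses $a>4\ell+3$ in part (i) and $a>4\ell+4$ in part (ii). The induction terminates at the stage where the compound multiplicities reduce to $[1,1-s]$, at which point a further Cremona step puts the residual system in the excellent range and Proposition \ref{prop:strongantiK}(iii) closes the base case, exactly as at the end of the proof of Lemma \ref{lem:V4a}.

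The hard part will be the boundary cases $\ell\leq 2$. When $\ell=0$ or $1$, the compound multiplicities are either vanishing or of type $[1,0]$, so the subtracted systems can be analyzed by hand and reduced to visibly non-special ones without invoking the full inductive machinery. The delicate case is $\ell=2$: the inequality on $a$ is attained with equality (for $a=11$ in part (i) and $a=12$ in part (ii)), and the degree of $\L_0|_\Gamma$ is then exactly $4=2p_a(\Gamma)-2$, so the vanishing of $H^1$ on $\Gamma$ is not automatic from the degree alone. Here I would invoke Lemma \ref{lem:ess}, which ensures that the divisor of the eight compound points on $\Gamma$ is not cut out by a conic, whence the pertinent line bundle on $\Gamma$ is distinct from the canonical and therefore has vanishing $H^1$ by Serre duality; in part (ii) the analogous statement about $2D_\omega$ and quartics from Lemma \ref{lem:ess} plays the same role after the additional subtraction of the line $L$. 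The configuration-generality established in \S\ref{sect:gentrasv} is what makes the boundary inequalities of the lemma sharp.
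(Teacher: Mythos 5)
Your proposal follows essentially the same route as the paper: Cremona--reduce the twisted bundle to $\L(a-2\ell-4;[\,\cdot\,,\cdot\,]^8)$ (resp.\ $\L(a-2\ell-5;\,\cdot\,)$), run the restriction-to-$\Gamma$ induction of Lemma \ref{lem:V4a} using the degree bounds $4(a-4\ell-2)>4$ and $4(a-4\ell-3)>4$, treat $\ell\leq 1$ by hand, and settle the $\ell=2$ boundary via Lemma \ref{lem:ess}. The only discrepancies are cosmetic: the paper writes the reduced compound multiplicities as $[r-s,r-1]$ rather than your $[r-1,r-s]$ (the two classes differ by $(-2)$--curves that split off without affecting $h^1$), and in part (ii) with $\ell=2$, $a=12$ the residual system is again $\L(3;[1,0]^8)$ restricted to $\Gamma$ in degree $4$, so it is the ``conic'' half of Lemma \ref{lem:ess}, not the ``quartic'' half, that is actually invoked.
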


\begin{proof} 
To prove (i),
one checks that the system $\L_{\tilde V_4}(-D-D')$
corresponds to
$\L(9(a-2\ell);
(4(a-2\ell))^4,[2(a-2\ell)+1, 2(a-2\ell)]^ 2, [r-s,r-1]^ 8)$. 
There are six $(-1)$--curves which intersect this system negatively,
namely  the four quartics we threw in this section and the two conics
we threw in the third degeneration. 
The intersection number of the system with the conics is $-1$, whereas the intersection with
the quartics is $-2$. The conics therefore split once and 
the quartics twice.
However all these $(-1)$--curves have been contracted on $\tilde V_4$, so 
$\L_{\tilde V_4}(-D-D')$ actually corresponds to the system obtained from
$\L(9(a-2\ell);
(4(a-2\ell))^4,[2(a-2\ell)+1, 2(a-2\ell)]^ 2, [r-s,r-1]^ 8)$ by removing these $(-1)$--curves
which appear in its fixed part.
This residual system is Cremona equivalent to
$\L(a-2\ell-4; [r-s,r-1]^ 8)$. The case $0\leq \ell\leq 1$  is obvious. 
Otherwise, if $a>4\ell+3$ 
we note that
the intersection with the curve $\Gamma$
which we considered in the proof of Lemma \ref{lem:V4a} is $4(a-4\ell-2)>4$
and the argument of that Lemma applies directly to
conclude the proof
 of the first assertion. If $\ell=2$ the intersection with the curve $\Gamma$
 has degree 4, but it is not special by Lemma \ref {lem:ess} and we finish as before.

As for (ii), note that, as above, $\L_{\tilde V_4}(-D-D'-E)$
corresponds to a system which is Cremona equivalent to 
$\L(a-2\ell-5; [r-s,r-1]^ 8)$. Again, the case $0\leq \ell\leq 1$  is clear.
Otherwise note that the intersection with the curve $\Gamma$ is $4(a-4\ell-3)>4$
and  the argument of Lemma \ref{lem:V4a} applies again to
conclude the proof in this case too. For $\ell=2$ the same argument as above applies. \end{proof}

Going back now to the surface $W_1=V_4$,
we want to apply the criterion for non--speciality as in Remark \ref {rmk:fu}.
If $C$ is the double curve of $V_4$, then its pull-back on $\tilde V_4$ is 
the reducible curve $D+D'$. The surjectivity
$H^ 0(\tilde V_4,\L_{\tilde V_4})\to H^ 0(D+D',\L_{D+D'})$ 
is a consequence of Lemma \ref  {lem:V4F}, if either
$a>4\ell+3$ or $a\geq 11$ and $\ell=2$ or $\ell\leq 1$.
The double curve $C$ 
is gotten from $D$, or $D'$, by gluing four
pairs of points, hence $C$ has arithmetic genus 4. Since $\L_{D}$ has degree
at least 8, then $\L_C$ is non special. 
Thus the criterion in Remark \ref {rmk:fu} can be applied and 
the system is non--special if either $a>4\ell+3$ or $a\geq 11$ and $\ell=2$ or $\ell\leq 1$.

Now, in passing from $W_1$ to $W_2$ we glue the surface $Z_4$ to $W_1$,
along the proper transforms of the curve $E$. 
We want to apply the criterion for non--speciality given by 
Proposition \ref{prop:fine}. Condition (i) of that criterion is verified
by Lemmas \ref {lem:V4a} and \ref {lem:Z4} as soon as $a\geq 4\ell+2$. 
Condition (ii) is a consequence of $H^ 1(V_4,\L_{V_4}(-E))=0$. 
An obvious variation of Remark \ref {rmk:fu} shows that this is in turn a consequence of
$H^1(\tilde V_4,\L_{\tilde V_4}(-D-D'-E))=0$, which we have as soon as
either $a>4\ell+4$, or $a\geq 12$ and $\ell=2$, or $\ell \leq 1$. 

Therefore, if either  $a>4\ell+4$  or $a\geq 12$ and $\ell=2$, or $\ell \leq 1$ and $a\geq 4\ell+1$
also the system on $W_2$ is 
non--special. 
Furthermore the restriction map
$H^ 0(W_2,\L)\to H^ 0(Z_4,\L_{Z_4})$ is surjective.

Next we glue $T_4$, creating $W_3$, and to do this we will choose
a specific value of $a$.
The double curve is $\Delta=A_1+A_2+G_1+G_2$.
Again we apply Remark \ref {rmk:fu}.

The surjectivity there is implied by the following:

\begin{lemma}\label {lem:Z4F}
Assume $a>2\ell-2$, $174/55 \leq  d/m \leq 19/6$ and take 
$a=\alpha-2\ell-h$, with:

\begin{itemize}
\item [(i)]  $h=1$ if either:

\begin{itemize}
\item [(a)] $\ell\geq 4$ or
\item [(b)]  $\ell\leq 3$ and $\alpha \geq 7\ell+7$;
\end{itemize}

\item  [(ii)]   $h=2$ if either:
\begin{itemize}
\item [(a')] $\ell=3$ and $\alpha=27$ or
\item [(b')] $\ell\leq 2$ and $7\ell+5\leq \alpha\leq 7\ell+6$;
\end{itemize}

\item  [(iii)]  $h=3$ if $1\leq \ell\leq 2$ and $\alpha = 7\ell+4$.

\end{itemize}
Then  $H^1(Z_4,\L_{ Z_4}(-A_1-A_2))=0$ and, in the same hypotheses as in part (ii) of Lemma
\ref   {lem:V4F},  
the restriction map $H^ 0(W_2,\L_{W_2})\to H^ 0(\Delta,\L_\Delta)$ is surjective.
\end{lemma}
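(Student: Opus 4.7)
The approach is to adapt the Cremona-reduction argument from the proof of Lemma~\ref{lem:Z4} to the twist $\L_{Z_4}(-A_1-A_2)$. Geometrically, subtracting $A_i$ at the $i$-th compound point on $Z_4$ increments the infinitely near multiplicity by one, so after the same Cremona equivalence used in \eqref{Z2Cremona} the system becomes
\[
\L(4\alpha - 3a;\,(\alpha - a)^{6},\,[b-2a,\,b-2a-e+1]^{2}).
\]
Along the two exceptional divisors $A_i$ the canonical class contributes $+2$, giving $\L_{Z_4}(-A_1-A_2)\cdot K_{Z_4} = 2\ell-a+2$, which is negative provided $a\geq 2\ell+3$. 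This explains why the parameter $h$ and its three different values appear: for each regime of $(\ell,\alpha)$ one must choose the smallest $h$ for which the extra hit from $-A_1-A_2$ still leaves the reduction standard and excellent.

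The core step is to run through the same four quadratic transformations displayed in the proof of Lemma~\ref{lem:Z4}, carrying along the extra $+1$ in the inner multiplicity. I would substitute $a = \alpha - 2\ell - h$ and verify that at every stage the three largest multiplicities have sum at least the current degree, i.e.\ that the system remains non-standard until the final step. In case (i)(a) the slack $\alpha \geq 9\ell$ together with $\ell\geq 4$ makes the sequence identical to the one in Lemma~\ref{lem:Z4} up to a harmless perturbation; case (i)(b) uses $\alpha\geq 7\ell+7$ to absorb the loss of slack at small $\ell$. For cases (ii) and (iii) the additional decrement of $a$ by one or two units is precisely what keeps the standardness and excellence inequalities alive across all four reductions; the specific numerical boundaries $\alpha = 27$, or $\alpha\in\{7\ell+5,7\ell+6\}$, or $\alpha = 7\ell+4$, are where successive steps become degenerate. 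After the reductions the system is standard and excellent, and Proposition~\ref{prop:strongantiK}(iii) delivers $H^{1}(Z_4,\L_{Z_4}(-A_1-A_2))=0$.

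From this vanishing I would deduce the surjectivity of $H^{0}(W_2,\L_{W_2})\to H^{0}(\Delta,\L_\Delta)$ by a Mayer--Vietoris argument on the decomposition $W_2 = V_4 \cup_E Z_4$. Since the curve $\Delta = A_1+A_2+G_1+G_2$ is disjoint from $E$, the sheaf $\L_{W_2}(-\Delta)$ restricts to $\L_{V_4}(-G_1-G_2)$ on $V_4$ and to $\L_{Z_4}(-A_1-A_2)$ on $Z_4$. The desired surjectivity onto $\Delta$ is therefore implied by $H^{1}(W_2,\L_{W_2}(-\Delta))=0$, which follows from the $H^{1}$-vanishing on each component together with surjectivity of the restriction to $H^{0}(E,\L_E)$ from either side. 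The $Z_4$ side is precisely the vanishing just established; for the $V_4$ side one applies the criterion of Remark~\ref{rmk:fu} to the normalization $\tilde V_4\to V_4$, using $H^{1}(\tilde V_4,\L_{\tilde V_4}(-D-D'-E))=0$ from Lemma~\ref{lem:V4F}(ii) to produce, given any prescribed data on $E$, a section on $\tilde V_4$ vanishing on $G_1+G_2$ and realising that data.

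The main obstacle I anticipate is the extreme boundary case (iii) with $\ell=2$ and $\alpha = 18$, corresponding to $(d,m)=(348,110)$ where the whole strategy is tight; here the four-step Cremona reduction is simultaneously close to violating standardness and excellence, and each intermediate multiplicity has to be checked by explicit arithmetic rather than by the clean inequalities available when $\alpha\gg 9\ell$. A related minor issue is that this sub-case has $a=11$, one short of the $a\geq 12$ hypothesis in Lemma~\ref{lem:V4F}(ii), which would force a small ad hoc argument to recover the needed vanishing on $V_4$ in this isolated $(d,m)$ configuration.
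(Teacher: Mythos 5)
Your treatment of the vanishing $H^1(Z_4,\L_{Z_4}(-A_1-A_2))=0$ follows the same route as the paper: carry the twist by $-A_1-A_2$ through the chain of quadratic transformations from Lemma \ref{lem:Z4} and read off the conditions on $h$. Two caveats. First, you have not executed the arithmetic, which is where essentially all the content of this lemma lies. Second, the endgame is not, as you assert, that the reduced system is standard and excellent: the paper's reduction terminates at $\L(x+2y;x,y^2,z,(z-\ell-e)^6)$ with $z-\ell-e=h-2$, so in the $h=1$ cases six $(-1)$--curves must first be split off (harmlessly, with no contribution to $H^1$) before one is left with the non--special system $\L(x+2y;x,y^2,z)$, and case (iii) is settled by inspecting the explicit system $\L(2\ell+3;(\ell+2)^2,\ell+1,1^6)$. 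The role of $h$ is to guarantee $x=(\alpha-7\ell+e)/2-6+h\geq -1$, i.e.\ effectivity of one residual multiplicity, rather than to preserve standardness along the way.

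The genuine gap is in your derivation of the surjectivity of $H^0(W_2,\L_{W_2})\to H^0(\Delta,\L_\Delta)$. Your Mayer--Vietoris route requires an $H^1$--vanishing on the $V_4$ side of the form $H^1(\tilde V_4,\L_{\tilde V_4}(-D-D'-G_1-G_2))=0$ (plus gluing), but Lemma \ref{lem:V4F}(ii) concerns $\L_{\tilde V_4}(-D-D'-E)$, and none of $D$, $D'$, $E$ involves $G_1$ or $G_2$; so that lemma cannot ``produce a section vanishing on $G_1+G_2$'' as you claim, and no such vanishing is established anywhere in the paper. The paper sidesteps the $V_4$ side entirely: by degree reasons the sheaf $\L_{G_1+G_2}(-A_1-A_2)$ has neither $H^0$ nor $H^1$ (the bundle restricts to each $G_i$ with degree $0$ and each $G_i$ meets $A_1+A_2$ once), whence $H^0(\Delta,\L_\Delta)\cong H^0(A_1+A_2,\L_{A_1+A_2})$. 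Surjectivity onto $\Delta$ therefore reduces to surjectivity onto $A_1+A_2$, which factors as $H^0(W_2,\L_{W_2})\twoheadrightarrow H^0(Z_4,\L_{Z_4})\twoheadrightarrow H^0(A_1+A_2,\L_{A_1+A_2})$, the first surjection coming from Lemma \ref{lem:V4F}(ii) and the second from the vanishing just proved. You should replace your $V_4$--side argument with this observation, or else supply the missing vanishing independently. Your closing worry about $(d,m)=(348,110)$ (where $\ell=2$, $\alpha=18$, and the $a\geq 12$ hypothesis fails) is legitimate, but that pair is excluded from Proposition \ref{174/55<=ratio} and handled ad hoc in \S\ref{subsec:348}, so it need not be absorbed into this lemma.
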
 

\begin {proof} Suppose for a moment we proved that  $H^1(Z_4,\L_{ Z_4}(-A_1-A_2))=0$. 
Look at the commutative diagram

$$
\begin{matrix}
&&&\L_{G_1+G_2}(-A_1-A_2)\\
&&&\downarrow\\
&\L_{W2}&\longrightarrow&\L_\Delta\\
&\downarrow&&\downarrow\\
&\L_{Z_4}& \longrightarrow&\L_{A_1+A_2}
\end{matrix}
$$
Notice that $H^ 0(G_1+G_2,\L_{G_1+G_2}(-A_1-A_2))\simeq H^ 1(G_1+G_2,\L_{G_1+G_2}(-A_1-A_2))= 0$
by degree reasons. Thus $H^ 0(\Delta, \L_\Delta)\simeq H^ 0(A_1+A_2,\L_{A_1+A_2})$. Since
the map $H^ 0(W_2,L_{W_2})\to H^ 0(Z_4, \L_{Z_4})$ is surjective by part (ii) of Lemma
\ref   {lem:V4F},  we have that also $H^ 0(W_2,\L_{W_2})\to H^ 0(\Delta,\L_\Delta)$ is surjective.

The proof that  $H^1(Z_4,\L_{ Z_4}(-A_1-A_2))=0$
parallels the one of Lemma \ref {lem:Z4}. The system $\L_{ Z_4}(-A_1-A_2)$ 
is Cremona equivalent to $\L(4\alpha-3a; (\alpha-a)^6, [b-2a+1-e,b-2a]^2)$ and we are in the
strong anticanonical case.
Moreover  the intersection of $\L_{ Z_4}(-A_1-A_2)$ with the canonical bundle is
$2\ell-a+2<0$. 

First make
a quadratic transformation based at the points of multiplicities
$b-2a+1-e, b-2a+1-e,b-2a$, getting the linear system
\[
\L(8\alpha-3b+2e-2;(\alpha-a)^ 6,b-2a, (4\alpha+a-2b-1+e)^ 2, 4\alpha+a-2b-2+2e).
\]
Next make a quadratic transformation based at points of multiplicities
$\alpha-a,\alpha-a, b-2a$, getting the linear system
\[
\L(14\alpha-7b+4a+4e-4;(\alpha-a)^ 4,(7\alpha+3a-4b+2e-2)^ 2,
\]
\[
6\alpha+2a-3b+2e-2, (4\alpha+a-2b-1+e)^ 2, 4\alpha+a-2b-2+2e).
\]
Again, make a quadratic transformation based at points of multiplicities
$\alpha-a,\alpha-a, 6\alpha+2a-3b+2e-2$, getting the linear system
\[
\L(20\alpha-11b+8a+6e-6; (\alpha-a)^ 2, 12\alpha+6a-7b+4e-4,
\]
\[
(7\alpha+3a-4b+2e-2)^ 4,  (4\alpha+a-2b-1+e)^ 2, 4\alpha+a-2b-2+2e).
\]
Then, make a quadratic transformation based at the points of multiplicities
$\alpha-a,\alpha-a, 12\alpha+6a-7b+4e-4$, getting the linear system
\[
\L(26\alpha-15b+12a+8e-8;(7\alpha+3a-4b+2e-2)^ 6,
\]
\[
18\alpha-11b+10a+6e-6,  (4\alpha+a-2b-1+e)^ 2, 4\alpha+a-2b-2+2e).
\]

Set 
\[
x= 18\alpha-11b+10a+6e-6, y=4\alpha+a-2b-1+e, z=4\alpha+a-2b-2+2e.
\]
Note that $y=z-e+1$, that
\[
26\alpha-15b+12a+8e-8=x+y+z-e+1=x+2y
\]
and
\[
7\alpha+3a-4b+2e-2=z-\ell-e
\]
so that the system may be written as
\[
\L(x+2y;x, y^ 2,z,(z-\ell-e)^ 6).
\]

Moreover one has
$z-\ell-e=h-2\geq -1$, $z=\ell+h+e-2\geq \ell+e-1\geq -1$, and
$y=z-e+1=\ell+h-1\geq \ell\geq 0$.
In addition one computes
$$x=\frac {\alpha-7\ell+e}2-6+h;$$
therefore  $x\geq -1$ if and only if $\alpha+e\geq 7\ell+10-2h$,
which is equivalent to $\alpha\geq 7\ell+9-2h$ (if equality holds
one has $e=1$). One checks that the hypotheses relating $\ell, \alpha$ and 
$h$ ensure that $x\geq -1$.

Suppose we are in case (i) or (ii). 
Then after splitting  single $(-1)$--curves with no contribution to $H^ 1$,
the residual system is
$\L(x+2y;x,y^ 2,z)$, which is non--empty and non--special.

In case (iii) the relevant system is $\L(2\ell+3; (\ell+2)^ 2, \ell+1,1^ 6)$, 
which is  non--empty and non--special in our cases.
\end{proof}

Note that the previous lemma applies in all cases but six, which are 

$$\ell=0, 1\leq \alpha\leq 4, \quad {\rm corresponding}\quad {\rm to}\quad (d,m)=(19\alpha,6\alpha)$$
$$\ell=1, 9\leq \alpha\leq 10, \quad {\rm corresponding}\quad {\rm to}\quad (d,m)=(3+19\alpha,1+6\alpha).$$

Finally we attach the planes $U_{1,4}$, and $U_{2,4}$
and  $Y_{i}$, $i=1,\ldots,4$,
in order to obtain first $W_4$ and then $W_5$.
In this case  the non--speciality is clear 
since the bundles there have degree $e$. 

\begin{proposition}
\label{174/55<=ratio}
If $174/55 \leq d/m \leq 19/6$ and $(d,m)\neq (174,55), (193,61), (348, 110)$,
then the system $\L(d;m^{10})$
has the expected dimension.
\end{proposition}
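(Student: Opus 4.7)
The plan is to verify that, for the stated range and outside the three excluded pairs, the fourth degeneration fed into Proposition \ref{prop:fine} yields $H^1(X_0,\L)=0$; semicontinuity will then give non--speciality on the general fibre. I would filter the central fibre as
\[
W_1 = V_4,\ W_2 = W_1\cup Z_4,\ W_3 = W_2\cup T_4,\ W_4 = W_3\cup U_{1,4}\cup U_{2,4},\ W_5 = X_0.
\]
The twisting parameter is forced by Lemma \ref{lem:Z4F}: set $a=\alpha-2\ell-h$ with $h\in\{1,2,3\}$ chosen according to the three cases of that lemma. For this $a$, Lemmas \ref{lem:V4a}, \ref{lem:Z4} and \ref{lem:T4} simultaneously say that the bundles on $V_4$, $Z_4$ and $T_4$ are non--empty and non--special; the planes $U_{i,4}$ and $Y_j$ are non--special for free, since the bundles on them have degree $e$ or $s$. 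This handles condition (i) of Proposition \ref{prop:fine}.

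The bulk of the work is condition (ii), the surjectivity of the glueing maps. Because $V_4$ is non--normal, the variant of Remark \ref{rmk:fu} reduces the surjectivity for $W_1$ itself to the vanishing in part (i) of Lemma \ref{lem:V4F}. Glueing $Z_4$ to form $W_2$ uses the same remark together with part (ii) of Lemma \ref{lem:V4F} to obtain $H^1(V_4,\L_{V_4}(-E))=0$, which in turn yields the required surjectivity on the double curve $E$. Glueing $T_4$ to form $W_3$ is precisely what Lemma \ref{lem:Z4F} is built for: the commutative square in its proof shows that surjectivity onto the full double curve $\Delta=A_1+A_2+G_1+G_2$ factors through $H^0(Z_4,\L_{Z_4})$ and is implied by $H^1(Z_4,\L_{Z_4}(-A_1-A_2))=0$, established there. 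Finally, attaching the four $Y_j$ and the two $U_{i,4}$ only involves restricting from a plane to one or two rational curves on which the bundle has degree $e$ or $s$, so surjectivity is immediate.

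The only real difficulty is the arithmetic bookkeeping: one must check that the values of $a$ dictated by Lemma \ref{lem:Z4F} also land in the admissible range of the remaining lemmas. The binding constraint is Lemma \ref{lem:V4F}(ii), which for $\ell\ge 3$ demands $a>4\ell+4$, for $\ell=2$ demands $a\ge 12$, and is vacuous for $\ell\le 1$. Coupling this with $a=\alpha-2\ell-h$, the case list for $h$ in Lemma \ref{lem:Z4F}, and the inequality $\alpha\ge 9\ell$ coming from $d/m\ge 174/55$, a direct case check shows that admissibility fails exactly for $(\ell,\alpha)=(1,9),\,(1,10),\,(2,18)$, i.e.\ at the three excluded pairs $(d,m)=(174,55),\,(193,61),\,(348,110)$. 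For $(1,9)$ and $(1,10)$ no value of $h$ in Lemma \ref{lem:Z4F} applies at all, while for $(2,18)$ the only admissible $h$ is $h=3$ (case (iii)), which forces $a=11$ and violates the lower bound $a\ge 12$ of Lemma \ref{lem:V4F}(ii). In every other case in the range the chain of reductions closes and Proposition \ref{prop:fine} delivers the claimed non--speciality.
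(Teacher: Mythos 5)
Your overall strategy is the paper's own: the same filtration $W_1=V_4,\dots,W_5=X_0$, the same choice $a=\alpha-2\ell-h$ dictated by Lemma \ref{lem:Z4F}, and the same verification of conditions (i) and (ii) of Proposition \ref{prop:fine} via Lemmas \ref{lem:V4a}, \ref{lem:Z4}, \ref{lem:T4}, \ref{lem:V4F} and \ref{lem:Z4F}. That part is fine.

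There is, however, a genuine gap in your final bookkeeping. You assert that admissibility fails \emph{exactly} at $(\ell,\alpha)=(1,9),(1,10),(2,18)$, i.e.\ at the three excluded pairs. This is not true: for $\ell=0$ and $1\leq\alpha\leq 4$, no case of Lemma \ref{lem:Z4F} applies either (case (i)(b) needs $\alpha\geq 7$, case (ii)(b') needs $5\leq\alpha\leq 6$, and case (iii) requires $\ell\geq 1$), so the degeneration argument as you have set it up produces no admissible value of $a$ for the four pairs $(d,m)=(19,6),(38,12),(57,18),(76,24)$. These pairs satisfy $d/m=19/6$, hence lie in the closed range of the Proposition, and they are not among the three excluded pairs, so your proof must cover them. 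The paper acknowledges precisely these six failures of Lemma \ref{lem:Z4F} ($\ell=0$, $\alpha\leq 4$ and $\ell=1$, $\alpha\in\{9,10\}$) and disposes of the four $\ell=0$ cases by appealing to known results in the literature (\cite{CM00}, \cite{CCMO}, \cite{dumnicki2}, \cite{dumnicki1}), while the $\ell=1$ cases become two of the three exclusions. Without some independent treatment of $(19,6),(38,12),(57,18),(76,24)$ --- either by citation or by a separate argument --- your proof of the Proposition is incomplete. The rest of your case analysis (in particular the identification of $(348,110)$ as failing only the bound $a\geq 12$ of Lemma \ref{lem:V4F}(ii) with $h=3$, $a=11$) agrees with the paper.
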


\begin{proof} Suppose $(d,m)$ correspond to a pair $(\alpha,\ell)$ satisfying any of the
hypotheses of Lemma  \ref {lem:Z4F}. Then we can take $a=\alpha-2\ell-h$ as indicated 
by the lemma.  If $\ell=0,1$
we need $a\geq 4\ell+1$ in order to meet all the 
hypotheses of the previous lemmas.  
This requires $\alpha\geq 6\ell+1+h$
which is true in all cases, and one concludes the proof using the results of \S \ref {sec:computation}.
If $\ell=2$ we need $a\geq 12$, and $\alpha-4-h\geq 12$ in all cases, except for $(d,m)=(348, 110)$,
i.e. $\alpha=18$. 
If $\ell>2$ we need $a> 4\ell+5$, and $\alpha-2\ell-h>4\ell+5$ in all cases. 

Finally the cases $(d,m)=(19\alpha,6\alpha)$ with $\alpha\leq 4$ are
covered by the results in \cite {CM00}, \cite  {CCMO}, \cite {dumnicki2} and cite {dumnicki1}. \end{proof}

\section{The remaining cases}\label{sec:174}

In this section we prove non--speciality of the three bundles 
left out in Proposition \ref  {174/55<=ratio} and this will finish the proof of 
Theorem \ref  {thm:main}.

\subsection{\bf The case d=174, m=55.}
\label{subsec:174}

The non--speciality of the  linear system $\L(174;55^ {10})$
does not follow directly from the above arguments,
and must be proved with an ad hoc argument,
which however uses the above setting. In particular we  make  
an analysis with the third degeration (see \S \ref {sec:3}).

Note that the virtual dimension is $-1$,
and therefore we have to prove that the system is empty.
We argue by contradiction and suppose this is not the case.
Then, by fixing extra base points,
we may assume that the dimension of the linear system
is zero on the general fibre of our degeneration.
Thus the curve on the general fibre describes a surface
in the total space of the degeneration, excluding the central fiber.
By taking the closure of this surface,
one finds a surface $S$ which intersects the central fibre along a curve.
We then conclude that there is some line bundle $\L$
on the total space of the degeneration,
i.e. the line bundle determined by $S$,
which is a limit line bundle of $\L(174;55^ {10})$ on the central fibre,
and such that the general section of the restriction of $\L$ to the central fibre
does not vanish identically on any irreducible component of the central fibre.
We will then say that the limit line bundle $\L_{\vert X_0}$ is \emph{centrally effective}. 

In particular, in the setting of \S \ref {sec:3},
there must be an integer $a$
for which the corresponding limit line bundle is centrally effective.
We will prove emptiness of $\L(174;55^ {10})$
by showing that there is no such an $a$. 

For $d=174$ and $m=55$, we have
$c=87$, $e=0$, $\alpha = 9$, $\ell =r=s= 1$, and $b=a+14$.

In the third degeneration, this gives the following bundles:
\begin{itemize}
\item $\L_{\tilde{V_3}} = \L(9a-2;(4a-1)^4,[2a,2a]^2)$;

\item $\L_{Z_3} = \L(36-3a;(9-a)^6, [14-a,14-a]^2)$;

\item $\L_{T_3} = \L(28-2a; [14-a,14-a]^2)$;

\item $\L_{U_{i,3}} = \O$ for $i=1,2$.
\end{itemize}

On $V_3$, the four quartics
\[
\L(4; 2, 2, 2, 1, [1,1]^2),\quad
\L(4; 2, 2, 1, 2, [1,1]^2),\quad
\L(4; 2, 1, 2, 2, [1,1]^2),\quad
\L(4; 1, 2, 2, 2, [1,1]^2)
\]
each have intersection number $-1$ with the system $\L_{\tilde{V_3}}$,
and so split off once each;
the residual system is
$\L'_{\tilde{V_3}} = \L(9a-18;(4a-8)^4,[2a-4,2a-4]^2)$.

The four quartics each meet the self-double curve twice
(once each on each side) and no other double curves on $V_3$.

Recalling the proof of Lemma \ref  {lem:V4a}, we see
that the system  $\L'_{\tilde{V_3}} $
is Cremona equivalent to the system of curves of degree $a-2$.
In order to give a divisor on $V_3$,  the curves
in the system
must match with the four quartics;
this requires that  they meet the self--double curve
in the eight fixed points that the four quartics do,
four on each side. 

Note that the self--double curve, which on $\tilde V_3$ pulls back to $F_1+F_2$, is Cremona equivalent to the curve $\Gamma=\Gamma_1+\Gamma_2$
we met in \S \ref {subsec:two}. The
eight points correspond to the eight points $t_i, s_i$, $i=1,\dots,4$, on $\Gamma$, four on each 
of the conics $\Gamma_1$ and $\Gamma_2$. We denote by $D$
the divisor formed by these eight points on $\Gamma$. Lemma \ref
{lem:ess} says that $D$ is
not a bicanonical divisor on $\Gamma$. There is no curve of degree
$a-2$ containing $D$ unless $a\geq 5$ and, for $a\geq 5$, $D$
gives  eight independent conditions to $\L(a-2)$. 
Hence we must have $a \geq 5$ for the limit bundle to be centrally effective,
in particular effective on $V_3$.

For $a=5$ the hypothesis in the transversality  Lemma \ref {lem:transv} is still
met. Indeed  $\L(3)(-D)$ is a pencil of cubics. What one has to show is that
it does not cut out on $\Gamma_1$ or $\Gamma_2$, off the base points in $D$,
a pencil of degree 2  with ramification points at $p_1, p_2$ or $q_1,q_2$. 
Suppose this is the case for all choices of the projective transformation
$\omega$ as in \S \ref {subsec:two}. 
Then, as in the proof of Lemma \ref {lem:ess},  for some special $\omega$, one of the points $s_1,\dots, s_4$ coincides with one of the points $a_1,\dots,a_4$. Then $\Lambda$ has a base point there, which is different from $p_1$ and $q_1$, a contradiction. 
Therefore, if $a=5$, this transversality implies that there are no divisors in $\L(3)(-D)$
which may correspond to Cartier divisors on $V_3$. 
In conclusion we must have $a \geq 6$ for central effectivity.

The system on $Z_3$ is Cremona equivalent to
$\L(36-3a; (9-a)^6, [14-a,14-a]^2)$
and, after the Cremona transformations,
the two compound points are not collinear.
Thus if $28-2a > 36-3a$ (i.e., if $a > 8$)
the two tangent lines to $E$ at these two points split.
If so, the system is empty,
since the two lines become $(-1)$--curves which meet.
Thus we must have $a\leq 8$ for the limit bundle to be centrally effective,
in particular effective on $Z_3$.

The system on $T_3$ corresponds to a pencil of conics, which,
as we saw in \S \ref {subsec:one},  sets up a correspondence between the
two curves $A_1$ and $A_2$, the intersection of $T_3$ with $Z_3$. 
The divisors in $\L_{Z_3}$ which correspond to divisors on the central
fibre must cut $A_1$ and $A_2$ in corresponding points. 

Return now to the linear system on $\tilde V_3$ equivalent to $\L(a-2)(-D)$, 
formed by all curves of degree
$a-2$ containing $D$. 
For $a\geq 6$ this system cuts out on either one of the conics $\Gamma_1$ and $\Gamma_2$
a complete linear series. This means that on $\tilde V_3$ the linear system 
$\L_{\tilde V_3}$ minus the 
four quartics and passing through the points on $F_1$ and $F_2$
corresponding to $t_i,s_i$, $i=1,\dots,4$, cuts out on either one of the curves $F_1$ and $F_2$,
off these base points, a complete linear series. 
Then we may apply Lemma  \ref {lem:transv}, and conclude that curves in 
$\L(a-2)(-D)$ which may correspond to Cartier divisors on $V_3$ 
form a system of dimension 
$(a-2)(a+1)/2 - 8 - (2a-8) = (a^2-5a-2)/2$.

Suppose we can apply transversality as indicated in \S \ref {sec:transv}.
Then  there are $14-a$ additional conditions on the system
on $Z_3$ in order to obtain divisors which  match with curves on $T_3$
and hence have a chance to be Cartier on the central fibre.
Thus the dimension $\delta$ of the family of these divisors is at most

\begin{equation}\label {eq:dimen}
\delta = \max \{-1, \dim( \L(36-3a; (9-a)^6, [14-a,14-a]^2))-14+a\}
\end{equation}
and central effectivity requires $\delta\geq 0$. 

We note that the curves of this system, as well as the ones on $V_3$,
restrict to the double curve $E=V_3\cap Z_3$
(which on $Z_3$ in this form is a nodal cubic through the six $(9-a)$-points)
in degree $a-2$.

We take up the three cases for $a$ in turn and verify the needed hypothesis for
transervality.

If $a=8$ the system on $Z_3$ is Cremona equivalent to
$\L(12; 1^6, [6,6]^2)$. By making the obvious Cremona transformations,
it can be further reduced to $\L(0;(-1)^ 6,0^ 4)$, which means that
it consists of six disjoint (-1)--curves. In this process the curves  $B_1$  and $A_1$ are
respectively mapped to a line $M$  through three of the last four points and to a cubic $N$
double at one of these points and passing through the first six base points. 
Since these six points are general, none of them coincides with the intersection
of $N$ and $M$ off the common base point. This suffices to apply Lemma \ref  {lem:transv}
and we conclude that the system is empty because then $\delta=-1$.

If $a=7$, the system on $Z_3$ is Cremona equivalent to $\L(3; 1^4)$, which 
has dimension 5. Again the generality assumptions provide the 
required hypothesis for the transversality and we conclude as above
since $\delta=-1$.

Finally let $a=6$.
By Lemma \ref{lem:V3F},
the linear system $\L_{V_3}$ on $V_3$ is
non--special of dimension $2$.
Since for $a=5$ the linear system on $V_3$ is empty,
then the kernel linear system $\L_{V_3}(-E)$ is empty,
and therefore the linear system cut out by $\L_{V_3}$ on $E$ has dimension $2$.

The line bundle on $Z_3$ is Cremona equivalent to
$\L(6;1^ 6,2^ 4)$, which is non special, of dimension $9$,
(see also by Lemma \ref {lem:Z4}). This cuts out a complete linear series 
on the cubic $N$ since the kernel linear system has dimension zero.
Hence again the transversality holds and we conclude that $\delta=1$ by
 \eqref  {eq:dimen}. 

The kernel linear system to $E$ is empty
(see the analysis of the $a=7$ case),
hence the linear system cut out on $E$ has dimension
$r\leq 1$.

There is no matching possible
between this $r$--dimensional linear system of degree $4$ on $E$,
and the dimension $2$ linear system restricted from the $V_3$ side,
by generality of the construction.
The curve $E$ is a nodal rational curve,
and we may embed its normalization into $\P^4$
as a rational normal curve of degree $4$;
we have the complete $g^ 4_4$ on it
which is cut out by the hyperplanes in $\P^ 4$.
Let $x_1$ and $x_2$ be the two points on the curve
that are identified to form the nodal curve $E$.
Consider the chordal line passing through the two points.
Each point on this line gives rise to a complete $g_4^ 3$
which identifies the two points.
The line minus the two points may be identified with the
natural $\C^*$ component of the Picard group of $E$.
The two sublinear systems that we have correspond to a line and a plane
meeting the chord,
giving rise to a $g_4^ 1$ and $g_4^ 2$ identifying the points.
The usual generality and transversality arguments as in \S \ref {sect:gentrasv},
imply that the above line and plane are general under the condition
of meeting the chord.
Hence there is no hyperplane containing these both,
and so there is no common restriction.

This completes the analysis of the $\L(174;{55}^{10})$ system,
and we conclude that it is empty as expected.

\subsection{\bf The case d=193, m=61.}
\label{subsec:193}

Here $\ell=1$ again and the virtual dimension is $4$. The analysis,
as in the previous case, uses the third degeneration and the four quartics 
split off once each. 

Using $a=7$  we see that:
\begin{itemize} 
\item the linear system on $\tilde V_3$ Cremona reduces to $\L(5;1^ 8)$, where the base points lie, as above, on $\Gamma$, which has dimension 12;
\item the linear system on $Z_3$ Cremona reduces to $\L(6;2^ 3, 1^ 7)$, which has dimension 11;
\item the linear system on $T_3$ is composed with 8 conics plus a fixed line.
\end{itemize}
All systems are non--empty and non--special, the transversality criteria all hold. Indeed,
from part (ii) of Lemma \ref {lem:V4F}, we see
that the system of curves 
on $V_4$ cuts out a complete linear series
on the double curve $E$. Moreover one directly verifies that  $\L_{Z_3}$  
cuts out complete linear series on the curves $A_i+B_i$, for $i=1,2$.
Therefore the dimension of the system on $V_3$ is 6. Similarly the dimension of the system on $Z_3+T_3$ is 2 and moreover its restriction of the latter system to $E$ is injective. The restriction of the $V_3$  system to $E$ has dimension 3. Using the same analysis as in the previous case, we see that  these two series on $E$ intersect in a linear series of dimension 1, and this implies that the family of matching curves on the central fibre has dimension 4.

\subsection{\bf The case d=348, m=110.}
\label{subsec:348}

This is the double of $\L(174;55^ {10})$, its virtual dimension is 
24, and we can prove it is non--special using the same ideas as
above. Actually the proof is easier and we will be brief.

We have
$c=174$, $e=0$, $\alpha = 18$, $\ell =2, r= 1, s=0$, and $b=a+28$.

Consider the fourth degeneration: this is necessary since the
quartics split with multiplicity 2 from the bundle on $V_3$ in the
third degeneration. Then
we have the following bundles (see \S \ref {sec:4}):
\begin{itemize}
\item $\L_{\tilde{V_4}}$, which is Cremona equivalent to
$\L(a-4;[1,1]^ 8)$ and the eight compound simple points
lie four each on the two conics $\Gamma_1,\Gamma_2$;

\item $\L_{Z_4}$ which is Cremona equivalent to $\L(72-3a;(18-a)^ 6, [28-a,28-a]^ 2$.
This is in turn Cremona equivalent to a system of the form 
$ \L(48-3a;(16-a)^4, (14-a)^6)$, where the six points of multiplicity
$14-a$ are general,
whereas the four points of multiplicity $16-a$ are not; three of them
are on the same line $L$ and two of these are infinitely near
giving a compound $[16-a,16-a]$ point; the line $L$ is the image
of one of the curves $G_i$, the other curve $G_{3-i}$ is contracted to the compound 
$[16-a,16-a]$ point;

\item $\L_{T_4}$ is Cremona equivalent to $\L(56-2a; [28-a,28-a]^2)$;

\item $\L_{U_{i,4}}$ and  $\L_{Y_{j}}$ fare trivial, for  $i=1,2$ and  $j=1,\dots,4$.
\end{itemize}

We fix $a=14$. Then $\dim(\L(10;[1,1]^ 8))=49$. In order to compute the dimension
of $\L_{V_4}$, we have to take into account the matching conditions with the self double
curves. The usual transversality arguments tell us that this imposes  at least 16 
conditions, i.e., 12 for the matching on $F_1$ and $F_2$ and 4 for the matching
on the curves $H_i, H'_i$, $i=1,\dots,4$  (see \S  \ref {sec:4}). Hence we find
$\dim (\L_{V_4})\leq 33$. In addition, from part (ii) of Lemma \ref {lem:V4F}, we see
that the system of matching curves 
on $V_4$ cuts out a complete linear series
on the double curve $E$.

The system  $\L_{Z_4}$ is Cremona equivalent to $\L(6;2^ 4)$. The base points
are not general, but the above description  implies that the system
is non--special, of dimension 15. The usual transversality gives us 14 more
matching conditions along the curves $A_1,A_2$, on each of which $\L_{Z_4}$ cuts out a complete
linear series. Hence the system on $Z_4+T_4$  has dimension 1.

We claim that the dimension of the system of matching curves on $V_4\cup Z_4\cup T_4$ is 24. Indeed such a curve 
can be constructed as follows: take any curve $D_1$ on $\L_{Z_4}$, which depends on one parameter,
and add any curve $D_2$ on $Z_4$ such that $D_1$ and $D_2$ cut out the same divisor
on $E$. Since $E$ is a rational curve of arithmetic genus 1,  $\L_{Z_4}\cdot E=\L_{V_4}\cdot E=10$, 
and the matching curves on $V_4$ cut out a complete linear series on $E$,
we see that $D_2$ varies with at most $23$ parameters. This proves our claim.

Finally, adding the remaining surfaces $U_{i,4}, Y_j$ for $i=1,2$ and $j=1,\dots,4$ 
does not increase the number of parameters of matching curves. This gives an upper bound 
of $24$ for the dimension of the system on the central fibre, thus proving non--speciality in this case.

\end{document}